\documentclass[reqno]{amsart}
\usepackage{hyperref}

\newcommand{\mX}{X^{s}_{0}(\Omega)}

\begin{document}
\title[\hfilneg
\hfil Arbitrary  many  positive solutions ]
{Arbitrary  many  positive solutions for a nonlinear problem involving the fractional Laplacian}

\author[J. Zhang\,\,,\,\,X. Liu\hfil
\hfilneg]
{Jinguo Zhang,\quad Xiaochun Liu}

\address{Jinguo Zhang\newline
   School of Mathematics \\
  Jiangxi Normal University \\
  330022 Nanchang, China}
\email{jgzhang@jxnu.edu.cn}

\address{Xiaochun Liu \newline
   School of Mathematics and Statistics \\
  Wuhan University \\
  430072 Wuhan, China}
\email{xcliu@whu.edu.cn}

\thanks{Supported by NSFC Grant No.11371282 and
Natural Science Foundation of Jiangxi (No. 20142BAB211002).}
\subjclass[2010]{35J60, 47J30}
 \keywords{Fractional Laplacian; Oscillatory nonlinearity; Variational method
; Arbitrarily many positive solutions}

\begin{abstract}
We establish the existence and multiplicity of positive solutions to the  problems
involving the fractional Laplacian:
\begin{equation*}
\left\{\begin{array}{lll}
&(-\Delta)^{s}u=\lambda u^{p}+f(u),\,\,u>0 \quad &\mbox{in}\,\,\Omega,\\
&u=0\quad &\mbox{in}\,\,\mathbb{R}^{N}\setminus\Omega,\\
\end{array}\right.
\end{equation*}
where $\Omega\subset \mathbb{R}^{N}$ $(N\geq 2)$ is a bounded smooth domain, $s\in (0,1)$,
$p>0$, $\lambda\in \mathbb{R}$ and $(-\Delta)^{s}$ stands for the fractional Laplacian.
When $f$ oscillates near the origin or at infinity,
via the variational argument we prove that the problem has arbitrarily
many positive solutions and the number of solutions to problem is strongly influenced by $u^{p}$ and $\lambda$.
Moreover, various properties of the solutions are also described in $L^{\infty}$- and $\mX$-norms.

 \end{abstract}

\maketitle \numberwithin{equation}{section}
\newtheorem{theorem}{Theorem}[section]
\newtheorem{lemma}{Lemma}[section]
\newtheorem{remark}{Remark}[section]
\newtheorem{proposition}{Proposition}[section]
\newtheorem{corollary}{Corollary}[section]
\newtheorem{definition}{Definition}[section]

\section{Introduction}
In this paper, we will consider the existence and  multiplicity of positive
solutions for the following nonlinear fractional equations
\begin{equation}\label{eq1-1}
\left\{\begin{array}{lll}
&(-\Delta)^{s}u=\lambda u^{p}+f(u), &\mbox{in}\,\,\Omega,\\
&u>0 \quad &\mbox{in}\,\,\Omega,\\
&u=0\quad &\mbox{in}\,\,\mathbb{R}^{N}\setminus\Omega,\\
\end{array}\right.
\end{equation}
where $\Omega\subset \mathbb{R}^{N}$ $(N>2s)$ is a bounded domain with smooth
boundary $\partial\Omega$, $0<s<1$ and $(-\Delta)^{s}$ stands for the fractional Laplacian.
$f:\,[0\,,\,\infty)\to \mathbb{R}$ is a continuous function, while $p>0$ and
$\lambda\in \mathbb{R}$ are some parameters.

Here $(-\Delta)^{s}$ is the fractional Laplacian  operator defined,
up to a normalization factor, by the Riesz potential as
$$-(-\Delta)^{s}u(x)=P. V.\,\int\limits_{\mathbb{R}^{N}}\frac{u(x+y)+u(x-y)-2u(x)}{|y|^{N+2s}}\,dy,
\quad x\in \mathbb{R}^{N},$$
where $s\in (0,1)$ is a fixed parameter. Fractional Sobolev space are well-known since the beginning
of the last century, especially in the framework of harmonic analysis.
More recently, after the paper of Caffarelli and Silvestre \cite{ll2007}, a large amount
of papers were written on problems which involve the fractional diffusion $(-\Delta)^{s}$.
This type of diffusion operators arise in several
 areas such as physics, probability and finance, see, for example, \cite{a2009,b1996}.
 Observe that $s=1$ corresponds to the standard local Laplacian.

One can also define the operator $(-\Delta)^{s}$ through the spectral decomposition of $u$,
in terms of the eigenvalues and eigenfunctions of the Laplacian $-\Delta$
 with homogeneous boundary conditions. As explained in \cite{ll2007} and \cite{xt2010},
 when working in a bounded domain $\Omega\subset\mathbb{R}^{N}$ the fractional Laplacian $(-\Delta)^{s}$
 can still be defined as a Dirichlet-to-Neumann map, and this allows us to connect nonlocal problems
 involving $(-\Delta)^{s}$ to suitable degenerate-singular, local problems defined in one more
 space dimension, that is, for any regular function $u$, the fractional Laplacian $(-\Delta)^{s}$
 acting on $u$ is defined by
 \begin{equation}\label{eq1-2}
 (-\Delta)^{s}u(x)=-\frac{1}{\kappa_{s}}\lim\limits_{t\to 0^{+}}t^{1-2s}\frac{\partial w}{\partial t}(x,t),
 \quad \forall x\in \Omega,\,\,t>0,
 \end{equation}
 where $w=E_{s}(u)$ is $s$-harmonic extension of $u$ and $\kappa_{s}=\frac{2^{1-2s}\Gamma(1-s)}{\Gamma(s)}$.
 Using the definition \eqref{eq1-2}, several results of the fractional Laplacian problems were obtained,
 one can see \cite{bes2012,xt2010,cdds2011,cw2014,ckl2013,ff2014,t2011,z2014} and the reference therein.

Since the fractional operator has nonlocal character, working on
bounded domains imposes that an
appropriate variational formulation of the problem is to consider functions on $\mathbb{R}^{N}$
with the condition $u=0$ in $\mathbb{R}^{N}\setminus\Omega$ replacing the boundary condition $u=0$
on $\partial\Omega$.
Set $$\mX=\{u\in H^{s}(\mathbb{R}^{N}):\,u=0\,\,\text{a.e. in}\,\,\mathbb{R}^{N}\setminus\Omega\},$$
where $H^{s}(\mathbb{R}^{N})$ denotes the usual fractional Sobolev space. We say
that $u\in \mX$ is a weak solution of \eqref{eq1-1} if for every $\varphi\in\mX$, one has
$$\int\limits_{\mathbb{R}^{N}\times\mathbb{R}^{N} }\frac{(u(x)-u(y))(\varphi(x)-\varphi(y))}{|x-y|^{N+2s}}dxdy=
\lambda\int\limits_{\Omega}u^{p}\varphi dx+\int\limits_{\Omega}f(u)\varphi dx.$$

Before starting our main results, we notice that the nonlinear equations involving the fractional
Laplacian have been widely studied recently.
For instance, Chang and Gonz\'{a}lez \cite{cg2011} studied this operator in conformal geometry.
Caffarelli et al.\cite{ljy2010,lsl2008} investigated free boundary problems of the
fractional Laplacian. Since the work of Caffarelli and Silvestre \cite{ll2007},
who introduced the $s$-harmonic extension to define the fractional Laplacian operator,
several results of the fractional Laplacian problems were obtained.
 Chang and Wang \cite{cw2014} obtained some nodal solutions of the fractional Laplacian.
 Barrios et. al. \cite{bes2012} studied the critical fractional problem.
Elliptic equations with fractional Laplacian were also studied by many authors,
see \cite{bes2013,b2014,xt2010,cw2013,sp2013,fw2012,s2006,sr2014-1,sr2014-2,s2013-1,
s2014-1,s2013-2,s2012,t2011,ws2014,zl2014}
and references therein.

The purpose of this paper is to study the number and behavior of solutions to the fractional elliptic
 problem \eqref{eq1-1} while $f$ oscillates near the origin or at infinity.
Using the variational methods we show that the nontrivial
weak solutions to problem \eqref{eq1-1} are strongly influenced by $u^{p}$, $\lambda$
and the oscillatory nonlinearity $f$.
In the past decades, the existence and multiplicity of classical elliptic boundary
value problems have been widely investigated,
on can see \cite{gz2013,kmt2007,km2010,oo2006,oz1996,r2002}.
Specially, the oscillatory terms usually give infinitely many distinct solutions for the equations.
However, due to the fact that the fractional Laplacian operator is nonlocal,
very few thing on this topic is known.
In this paper, we generalize the truncation  methods to the situation of fractional Laplacian
and get the arbitrarily many positive solutions to the elliptic equations involving
fractional Laplacian.

In the sequel, we state our main results by treating separately two cases, i.e.,
when $f$ oscillates near the origin, or at infinity, respectively.
We assume that $f\in C([0,\infty)\,,\,\mathbb{R})$, $f(0)=0$ and satisfies
\begin{itemize}
\item[($f_{0}$)] $-\infty<\liminf\limits_{t\to 0^{+}}\frac{F(t)}{t^2}\leq
              \limsup\limits_{t\to 0^{+}}\frac{F(t)}{t^2}=+\infty$,
          and $l_{0}:=\liminf\limits_{t\to 0^{+}}\frac{f(t)}{t}<0$;

or

\item[($f_{\infty}$)]$-\infty<\liminf\limits_{t\to \infty}\frac{F(t)}{t^2}\leq
              \limsup\limits_{t\to\infty}\frac{F(t)}{t^2}=+\infty$,
and $l_{\infty}:=\liminf\limits_{t\to \infty}\frac{f(t)}{t}<0$,
\end{itemize}
where $F(t)=\int_{0}^{t}f(\tau)d\tau$, $t>0$.

\begin{remark}\label{r1-1}
\begin{itemize}
\item[(1)]Hypotheses $(f_{0})$ implies an oscillatory behavior of $f$ near the origin, and
$(f_{\infty})$ implies an oscillatory behavior of $f$ at infinity. Moreover, there is no increasing condition on $f$,
which means $f$ could be supercritical.
\item[(2)]It follows from the assumption $\liminf\limits_{t\to 0^{+}}\frac{f(t)}{t}<0$ that
there exists a subsequence $\{t_{k}\}\subset(0\,,\,\infty)$ converging to 0 such that $f(t_{k})<0$
for every $k\in \mathbb{N}$. Without loss of generality, we could assume that this sequence is strictly decreasing.
Similarly, from  $\liminf\limits_{t\to \infty}\frac{f(t)}{t}<0$, we will assume there is a strictly
increasing sequence $\{t_{k}\}\subset (0\,,\,\infty)$ converging to $+\infty$ such that $f(t_{k})<0$ for all $k\in \mathbb{N}$.
\end{itemize}
\end{remark}

\begin{remark}\label{r1-3}
\begin{itemize}
\item[(1)] An example of a function $f:\,[0\,,\,\infty)\to \mathbb{R}$ satisfies $(f_0)$ is
defined by
\begin{equation*}
f(t)=\left\{\begin{array}{lll}
&0\quad &\text{if}\quad t=0;\\
&t^{\alpha}(a+\sin\frac{1}{t^{\beta}})\quad &\text{if}\quad t>0,\\
\end{array}\right.
\end{equation*}
where $\alpha$, $\beta$, $a\in \mathbb{R}$ satisfy $0<\alpha<1<\alpha+\beta$ and $0<a<1$.
\item[(2)]
The other function $f:\,[0\,,\,\infty)\to \mathbb{R}$ satisfies $(f_{\infty})$ is
$$f(t)=t^{\alpha}(a+\sin t^{\beta}),$$
where $\alpha$, $\beta$, $a\in \mathbb{R}$ satisfy $\alpha>1$, $|\alpha-\beta|<1$ and $0<a<1$.
\end{itemize}
\end{remark}

In order to formulate our results, we recall some notations. More details will be see in Section 2.
The Hilbert space $X_{0}^{s}(\Omega)$ is endowed with its inner product and norm,
$$\langle u\,,\,v \rangle_{\mX}=\int\limits_{\mathbb{R}^{N}\times\mathbb{R}^{N}}\frac{(u(x)-u(y))(v(x)-v(y))}{|x-y|^{N+2s}}dxdy,
\quad u\,,\,v\in \mX,$$
and $$\|u\|_{\mX}^{2}=\int\limits_{\mathbb{R}^{N}\times\mathbb{R}^{N}}\frac{|u(x)-u(y)|^{2}}{|x-y|^{N+2s}}dxdy,
\quad u\in \mX.$$
The space $L^{q}(\Omega)$ is endowed with its usual norm $\|\cdot\|_{L^{q}(\Omega)}$ for $q\in [1\,,\,+\infty]$.

The first result deals with the case that $f$ is oscillatory near the origin.
\begin{theorem}\label{th1-1}
Assume that $(f_0)$ holds. If
\begin{itemize}
\item[{\em (a)}]either $p=1$ and $\lambda<\lambda_{0}$ for some $0<\lambda_{0}<-l_{0}$,
\item[{\em (b)}] or $p>1$ and $\lambda\in \mathbb{R}$ is arbitrary,
\end{itemize}
then there exist infinitely many positive solutions
 $\{u_{k}\}_{k\in \mathbb{N}}\subset \mX$  of Eq.
\eqref{eq1-1} such that
\begin{equation}\label{eq1-5}
\lim\limits_{k\to +\infty}\|u_{k}\|_{\mX}
=\lim\limits_{k\to +\infty}\|u_{k}\|_{L^{\infty}(\Omega)}=0.
\end{equation}
\end{theorem}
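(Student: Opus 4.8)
The plan is to transplant to the nonlocal setting the classical truncation method for oscillatory nonlinearities. The first step is a reduction: under either (a) or (b) the combined nonlinearity $h(t):=\lambda t^{p}+f(t)$ (extended by $h(t)=0$ for $t\le0$) inherits the structural content of $(f_0)$. Indeed $h$ is continuous with $h(0)=0$; writing $H(t)=\int_0^t h(\tau)\,d\tau=\frac{\lambda}{p+1}t^{p+1}+F(t)$ and using $t^{p+1}\le t^{2}$ for $0\le t\le1$, $p\ge1$, we get $\limsup_{t\to0^{+}}H(t)/t^{2}=+\infty$ and $\liminf_{t\to0^{+}}H(t)/t^{2}>-\infty$ from the corresponding properties of $F$; and $\liminf_{t\to0^{+}}h(t)/t<0$, since in case (b) $\lambda t^{p-1}\to0$ so this liminf equals $l_{0}$, while in case (a) it equals $\lambda+l_{0}\le\lambda_{0}+l_{0}<0$. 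Consequently, by the elementary reasoning of Remark~\ref{r1-1}(2) applied to $h$, there is a strictly decreasing sequence $t_{k}\downarrow0$ with $h(t_{k})<0$, and by the $\limsup$ condition a sequence $\sigma_{j}\downarrow0$ with $H(\sigma_{j})/\sigma_{j}^{2}\to+\infty$.

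For each $k$ I would truncate at level $t_{k}$: set $h_{k}(t)=0$ for $t<0$, $h_{k}(t)=h(t)$ for $0\le t\le t_{k}$, and $h_{k}(t)=h(t_{k})$ for $t>t_{k}$; this $h_{k}$ is continuous and bounded, $H_{k}(t):=\int_{0}^{t}h_{k}$ is bounded above, and $H_{k}(t)\to-\infty$ as $t\to\infty$ because $h(t_{k})<0$. On $\mX$ consider $\mathcal E_{k}(u)=\frac12\|u\|_{\mX}^{2}-\int_{\Omega}H_{k}(u)\,dx$. Using the compact embedding $\mX\hookrightarrow L^{2}(\Omega)$ (valid since $N>2s$) one checks that $\mathcal E_{k}$ is $C^{1}$, coercive (as $H_{k}$ is bounded above) and sequentially weakly lower semicontinuous (Fatou's lemma for the $H_{k}$ term), hence it attains its global infimum at some $u_{k}\in\mX$, which is a weak solution of $(-\Delta)^{s}u=h_{k}(u)$. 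Two test-function arguments then localize $u_{k}$: testing the Euler--Lagrange identity with $u_{k}^{-}$, and with $(u_{k}-t_{k})^{+}$, and invoking the pointwise inequalities $(a-b)\big((a-c)^{+}-(b-c)^{+}\big)\ge\big((a-c)^{+}-(b-c)^{+}\big)^{2}$ (so that $\langle u_{k},(u_{k}-c)^{+}\rangle_{\mX}\ge\|(u_{k}-c)^{+}\|_{\mX}^{2}$) together with $\langle u_{k}^{+},u_{k}^{-}\rangle_{\mX}\le0$, $h_{k}\equiv0$ on $(-\infty,0]$ and $h_{k}(t_{k})=h(t_{k})<0$, one obtains $0\le u_{k}\le t_{k}$ a.e. in $\Omega$. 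On this range $h_{k}\equiv h$, so $(-\Delta)^{s}u_{k}=\lambda u_{k}^{p}+f(u_{k})$ in $\Omega$ and $u_{k}=0$ in $\R\setminus\Omega$.

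It then remains to show $u_{k}\not\equiv0$, $u_{k}>0$, and that infinitely many of the $u_{k}$ are distinct with vanishing norms. For nontriviality, fix $w_{0}\in C_{c}^{\infty}(\Omega)$ with $0\le w_{0}\le1$ and $w_{0}\equiv1$ on a ball $B'\subset\subset B\subset\subset\Omega$, and test $\mathcal E_{k}$ with $\sigma w_{0}$ for $\sigma$ small (so $\sigma w_{0}\le t_{k}$, the truncation inactive): using $H(t)\ge-C_{1}t^{2}$ near $0$ on $B\setminus B'$ and $H_{k}(\sigma)=H(\sigma)$ on $B'$, one gets $\mathcal E_{k}(\sigma w_{0})/\sigma^{2}\le\frac12\|w_{0}\|_{\mX}^{2}+C_{1}|B\setminus B'|-|B'|\,H(\sigma)/\sigma^{2}$, whose right side $\to-\infty$ along $\sigma=\sigma_{j}$; hence $\mathcal E_{k}(u_{k})=\inf_{\mX}\mathcal E_{k}<0=\mathcal E_{k}(0)$ and $u_{k}\not\equiv0$. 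Strict positivity follows from fractional elliptic regularity (so $u_{k}$ is continuous and the principal-value formula for $(-\Delta)^{s}u_{k}$ holds pointwise): if $u_{k}(x_{0})=0$ at some $x_{0}\in\Omega$ then $x_{0}$ is a global minimum, and $(-\Delta)^{s}u_{k}(x_{0})$ is represented by a strictly negative integral since $u_{k}\ge0$, $u_{k}\not\equiv0$, contradicting $(-\Delta)^{s}u_{k}(x_{0})=h(0)=0$. Finally, if $\{u_{k}:k\in\mathbb N\}$ were finite, some value would be attained along a subsequence $k_{j}\to\infty$, but $\|u_{k_{j}}\|_{L^{\infty}(\Omega)}\le t_{k_{j}}\to0$ would force that value to be $0$, against positivity; so there are infinitely many distinct $u_{k}$, and testing the equation with $u_{k}$ gives $\|u_{k}\|_{\mX}^{2}=\int_{\Omega}h(u_{k})u_{k}\,dx\le|\Omega|\,t_{k}\max_{[0,t_{k}]}|h|\to0$, which with $\|u_{k}\|_{L^{\infty}(\Omega)}\le t_{k}\to0$ yields \eqref{eq1-5}.

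The step I expect to be the main obstacle is the reduction in the first paragraph: making precise that hypotheses (a)/(b) are exactly what keeps $h=\lambda(\cdot)^{p}+f$ within an $(f_0)$-type regime, in particular securing the levels $t_{k}$ with $h(t_{k})<0$ — which is precisely what makes the upper barrier $u_{k}\le t_{k}$ go through — and, more technically, the nontriviality estimate, where the divergence of $H(\sigma_{j})/\sigma_{j}^{2}$ must be balanced against $\|w_{0}\|_{\mX}^{2}$ and the transition region $B\setminus B'$. The nonlocality intervenes only through the two pointwise truncation inequalities and the appeal to fractional regularity for strict positivity, both of which are by now routine.
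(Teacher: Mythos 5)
Your proposal is correct, and while the overall strategy (truncate the nonlinearity at the oscillation levels, minimize, and use the shrinking $L^{\infty}$-bounds to force infinitely many distinct small solutions) is the same as the paper's, the core technical mechanism is genuinely different. The paper rewrites the equation as $(-\Delta)^{s}u+\mu u=g(u)$ with $g\le 0$ on whole intervals $[\delta_k,\eta_k]$ (obtained from continuity around the points where $g<0$), minimizes the truncated functional over the convex set $E_{\eta_k}=\{\|u\|_{L^{\infty}(\Omega)}\le\eta_k\}$, and uses a replacement argument ($u=\gamma\circ u_0$) to show the constrained minimizer lies in $[0,\delta_k]$ and is an interior, hence free, critical point. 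You instead minimize the once-truncated functional freely over all of $\mX$ (coercivity coming from $H_k$ being bounded above) and obtain the localization $0\le u_k\le t_k$ a posteriori by testing with $u_k^{-}$ and $(u_k-t_k)^{+}$ via the pointwise monotonicity inequality for the Gagliardo form; this needs only single levels $t_k$ with $h(t_k)<0$, not intervals, and it sidesteps the delicate step in the paper where one must pass from a constrained minimizer to a critical point of the unconstrained functional (which, as written there, requires perturbations $v\in L^{\infty}\cap\mX$ plus density). Your distinctness argument (a repeated value would have $L^{\infty}$-norm below $t_{k_j}\to 0$, hence be zero) is also simpler than the paper's comparison of energy levels $\mathcal{E}_1(u_k)<0$, $\mathcal{E}_1(u_k)\to 0$, and you additionally supply a strict-positivity argument via regularity and the nonlocal maximum principle, which the paper omits. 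Two cosmetic points: the weak lower semicontinuity of $u\mapsto\int_{\Omega}H_k(u)\,dx$ is better justified by compactness of $\mX\hookrightarrow L^{2}(\Omega)$ together with $|H_k(t)|\le\|h_k\|_{\infty}|t|$ and generalized dominated convergence rather than by Fatou; and the reduction in your first paragraph (boundedness of $\lambda t^{p-1}$ for $p\ge 1$, and $\lambda+l_0<0$ in case (a)) is exactly the computation the paper performs when verifying $(G_2)$ and $(G_3)$, so it is not an obstacle.
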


\begin{remark}\label{r1-2}
Notice that $p>1$ may be critical and supercritical in Theorem \ref{th1-1} (b). Having a suitable nonlinearity
oscillating near the origin, Theorem \ref{th1-1} roughly says that the term $u^{p}$
does not affect the number of the solutions to problem \eqref{eq1-1} whenever $p>1$.
\end{remark}

On the other hand, when $0<p<1$, the term $u^p$ may compete with the function $f$ near the origin such
that the number of the solutions to \eqref{eq1-1} becomes finite for many values of $\lambda$.
In this case, we have the following result:

\begin{theorem}\label{th1-2}
Assume $f$ satisfies $(f_0)$ and $0<p<1$. Then there exist $\lambda_{k}>0$
such that problem \eqref{eq1-1} has at least $k$ distinct weak solutions
$u_{1,\lambda}, u_{2,\lambda},\cdot\cdot\cdot,u_{k,\lambda}$ for every
$\lambda\in[-\lambda_{k},\lambda_{k}]$ and $k\in \mathbb{N}$. Moreover,
for any $i\in \{1,2,\cdot\cdot\cdot,k\}$,
\begin{equation}\label{eq1-6}
\|u_{i,\lambda}\|_{\mX}<\frac{1}{i}\quad\text{and}\quad
\|u_{i,\lambda}\|_{L^{\infty}(\Omega)}<\frac{1}{i} .
\end{equation}
\end{theorem}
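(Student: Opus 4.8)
The plan is to build the $k$ solutions one at a time, as global minimizers of a family of truncated functionals, and to impose the smallness of $|\lambda|$ only at the very end, in terms of finitely many constants that have been frozen beforehand.

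Using $(f_0)$ and Remark \ref{r1-1}(2), I first fix, once and for all, interlaced null sequences
\[
t_1>\tau_1>s_1>t_2>\tau_2>s_2>\cdots\longrightarrow 0
\]
such that $f(t_i)<0$, the ratio $F(\tau_i)/\tau_i^{2}$ is as large as we wish, $\max_{[0,s_i]}F$ is as small as we wish, and $t_i<1/i$; each choice is possible because $\limsup_{t\to0^+}F(t)/t^2=+\infty$, because $\liminf_{t\to0^+}f(t)/t<0$, and because $F$ is continuous with $F(0)=0$. I also fix a function $\chi\in\mX$ with $0\le\chi\le1$ and $\chi\equiv1$ on some open $\Omega'$ with $\overline{\Omega'}\subset\Omega$. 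For each $i$ I set $g_{i,\lambda}(t)=0$ for $t\le0$, $g_{i,\lambda}(t)=\lambda t^{p}+f(t)$ for $0\le t\le t_i$, $g_{i,\lambda}(t)=\lambda t_i^{p}+f(t_i)$ for $t\ge t_i$, with primitive $G_{i,\lambda}(t)=\int_0^t g_{i,\lambda}$, and
\[
\mathcal E_{i,\lambda}(u)=\tfrac12\|u\|_{\mX}^{2}-\int_\Omega G_{i,\lambda}(u)\,dx,\qquad u\in\mX.
\]
Since $g_{i,\lambda}$ is bounded, $G_{i,\lambda}$ has at most linear growth, so $\mathcal E_{i,\lambda}$ is coercive on $\mX$; it is also sequentially weakly lower semicontinuous, the potential term being weakly continuous through the compact embedding $\mX\hookrightarrow L^{1}(\Omega)$. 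Hence $\mathcal E_{i,\lambda}$ attains its infimum at some $u_{i,\lambda}\in\mX$, a weak solution of the truncated equation $(-\Delta)^s u=g_{i,\lambda}(u)$; by regularity for $(-\Delta)^s$ with bounded right-hand side, $u_{i,\lambda}$ is continuous on $\Omega$.

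Next I check that for $|\lambda|$ small each $u_{i,\lambda}$ is a nontrivial positive solution of \eqref{eq1-1}. Testing $\mathcal E_{i,\lambda}'(u_{i,\lambda})=0$ with $u_{i,\lambda}^{-}$ and with $(u_{i,\lambda}-t_i)^{+}$ and invoking the usual monotonicity inequalities for the Gagliardo seminorm yields $0\le u_{i,\lambda}\le t_i$ a.e.\ in $\Omega$, the upper estimate requiring $\lambda t_i^{p}+f(t_i)\le0$, i.e.\ $|\lambda|\le |f(t_i)|/t_i^{p}=:a_i$. On $[0,t_i]$ the truncation is inactive, so $u_{i,\lambda}$ solves \eqref{eq1-1} in the weak sense with $u_{i,\lambda}\ge0$; since $f(0)=0$ and $u^{p}$ vanishes at a zero of $u$, the strong maximum principle for $(-\Delta)^{s}$ upgrades this to $u_{i,\lambda}>0$ in $\Omega$ as soon as $u_{i,\lambda}\not\equiv0$. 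For nontriviality I evaluate $\mathcal E_{i,\lambda}$ at $w_i:=\tau_i\chi$, on whose range the truncation is inactive since $\tau_i<t_i$: using $\liminf_{t\to0^+}F(t)/t^2>-\infty$, hence $F(t)\ge -c_0t^2$ near $0$, the requirement $F(\tau_i)/\tau_i^{2}\gg1$ forces $\mathcal E_{i,0}(w_i)<0$; and since $\mathcal E_{i,\lambda}(w_i)=\mathcal E_{i,0}(w_i)-\tfrac{\lambda}{p+1}\tau_i^{p+1}\int_\Omega\chi^{p+1}$ depends on $\lambda$ only through a single fixed constant, there is $b_i>0$ with $\mathcal E_{i,\lambda}(w_i)\le\tfrac12\mathcal E_{i,0}(w_i)<0$ whenever $|\lambda|\le b_i$. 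Therefore $\mathcal E_{i,\lambda}(u_{i,\lambda})\le\mathcal E_{i,\lambda}(w_i)<0=\mathcal E_{i,\lambda}(0)$, so $u_{i,\lambda}\not\equiv0$.

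To separate the solutions I claim $\|u_{i,\lambda}\|_{L^{\infty}(\Omega)}>s_i$. Indeed, were $u_{i,\lambda}$ to take its values in $[0,s_i]$ then, the truncation being inactive there, $\int_\Omega G_{i,\lambda}(u_{i,\lambda})\le|\Omega|\max_{[0,s_i]}\big(\tfrac{\lambda}{p+1}t^{p+1}+F(t)\big)$, which by the smallness of $\max_{[0,s_i]}F$ and a further bound $|\lambda|\le c_i$ can be made strictly less than $-\mathcal E_{i,\lambda}(w_i)$; then $\mathcal E_{i,\lambda}(u_{i,\lambda})\ge-\int_\Omega G_{i,\lambda}(u_{i,\lambda})>\mathcal E_{i,\lambda}(w_i)\ge\mathcal E_{i,\lambda}(u_{i,\lambda})$, absurd. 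Since $s_i>t_{i+1}\ge t_j\ge\|u_{j,\lambda}\|_{L^{\infty}}$ for every $j>i$, the $L^\infty$-norms $\|u_{1,\lambda}\|_{L^\infty}>\|u_{2,\lambda}\|_{L^\infty}>\cdots$ are strictly decreasing, so the solutions are pairwise distinct. Finally, taking $\varphi=u_{i,\lambda}$ in the weak formulation gives $\|u_{i,\lambda}\|_{\mX}^{2}=\lambda\int_\Omega u_{i,\lambda}^{p+1}+\int_\Omega f(u_{i,\lambda})u_{i,\lambda}\le|\lambda|\,|\Omega|\,t_i^{p+1}+|\Omega|\,t_i\max_{[0,t_i]}|f|\to0$ as $t_i\to0$; so if each $t_i$ was taken small enough at the outset, both $\|u_{i,\lambda}\|_{\mX}<1/i$ and $\|u_{i,\lambda}\|_{L^\infty(\Omega)}\le t_i<1/i$. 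Putting $\lambda_k:=\min_{1\le i\le k}\min\{a_i,b_i,c_i,1\}>0$, all of the above holds simultaneously for $i=1,\dots,k$ whenever $|\lambda|\le\lambda_k$, which is exactly the assertion of Theorem \ref{th1-2}. The main obstacle is the competition with the concave term $u^{p}$, $0<p<1$: since $p+1<2$, it is of lower order than $t^2$ near the origin, so for $\lambda\neq0$ it can at once spoil the negativity of $\mathcal E_{i,\lambda}$ along $w_i$ and the sign condition $\lambda t_i^{p}+f(t_i)\le0$ underlying the truncation; the construction circumvents this by freezing all geometric data $t_i,\tau_i,s_i,\chi$ first, after which every requirement becomes an inequality $|\lambda|\le\mathrm{const}_i$, and this is also why $\lambda_k$ will in general shrink to $0$ as $k\to\infty$.
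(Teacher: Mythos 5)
Your proof is correct, but it takes a genuinely different route from the paper's in two respects. First, where the paper minimizes the energy over the convex set $E_{\eta_k}=\{u\in\mX:\|u\|_{L^\infty(\Omega)}\le\eta_k\}$ (Theorem \ref{th2-1}) and must then argue that the constrained minimizer is a free critical point, you truncate the nonlinearity itself at the level $t_i$, minimize globally on $\mX$, and recover the a priori bound $0\le u_{i,\lambda}\le t_i$ a posteriori by testing with $u_{i,\lambda}^{-}$ and $(u_{i,\lambda}-t_i)^{+}$, using the sign condition $\lambda t_i^{p}+f(t_i)\le 0$; this sidesteps the delicate step in the paper's Theorem \ref{th2-1}(ii), where the admissible variation $u_0+tv$ must stay in $E_\eta$ and hence implicitly requires $\|v\|_{L^\infty(\Omega)}<\infty$ for arbitrary $v\in\mX$. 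Second, where the paper separates the $k$ solutions by trapping their energies in disjoint intervals $(\theta_i,\theta_{i+1})$ built from the unperturbed ($\lambda=0$) levels together with the perturbation constants $\alpha_i,\beta_i$, you separate them by strictly ordered sup-norms: the energy comparison against $w_i=\tau_i\chi$ forces $\|u_{i,\lambda}\|_{L^\infty(\Omega)}>s_i>t_{i+1}\ge\|u_{j,\lambda}\|_{L^\infty(\Omega)}$ for $j>i$, which is arguably more elementary and gives the distinctness and the $L^\infty$-estimates in one stroke. Both mechanisms require $|\lambda|$ small relative to finitely many constants frozen before $\lambda$ is chosen, and your book-keeping of $a_i,b_i,c_i$ and $\lambda_k$ is sound; the interlacing $t_i>\tau_i>s_i>t_{i+1}$ is consistent because $\tau_i$ (hence the fixed negative value $\mathcal{E}_{i,0}(w_i)$) is chosen before $s_i$, and $s_i$ before $t_{i+1}$. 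Two minor remarks: the continuity of $u_{i,\lambda}$ that you invoke is never actually used; and the strict positivity $u_{i,\lambda}>0$ via a strong maximum principle is not fully justified when $\lambda<0$ and $0<p<1$, since $u\mapsto\lambda u^{p}$ is not Lipschitz at the origin --- but the paper itself only establishes $u\ge 0$ and $u\not\equiv 0$, so this is a caveat you share with, rather than introduce relative to, the original argument.
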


In the sequel, we will state the counterparts of Theorems \ref{th1-1} and \ref{th1-2}
whenever $f$ oscillates at infinity.
\begin{theorem}\label{th1-3}
Assume that $(f_{\infty})$ holds. If
\begin{itemize}
\item[{\em (a)}]either $p=1$ and $\lambda<\lambda_{\infty}$ for some $0<\lambda_{\infty}<-l_{\infty}$,
\item[{\em (b)}] or $p<1$ and $\lambda\in \mathbb{R}$ is arbitrary,
\end{itemize}
then there exist infinitely many positive solutions  $\{u_{k}\}_{k\in \mathbb{N}}\subset \mX$  of problem
\eqref{eq1-1} such that
\begin{equation}\label{eq1-5}
\lim\limits_{k\to +\infty}\|u_{k}\|_{L^{\infty}(\Omega)}=\infty.
\end{equation}
\end{theorem}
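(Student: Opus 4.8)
The plan is to run the truncation scheme already used for Theorem~\ref{th1-1}, now tuned to the oscillation of $f$ \emph{at infinity}. By $(f_{\infty})$ and Remark~\ref{r1-1}(2), refining the choice of the sequence there according to the definition of $l_{\infty}=\liminf_{t\to\infty}f(t)/t<0$, I would fix a strictly increasing sequence $t_{k}\to+\infty$ with $f(t_{k})/t_{k}\to l_{\infty}$ (so $f(t_{k})<0$ for $k$ large). The decisive elementary observation is that, in both cases, $\lambda t_{k}^{p}+f(t_{k})=t_{k}^{p}\bigl(\lambda+t_{k}^{1-p}f(t_{k})/t_{k}\bigr)<0$ for all $k$ large: in case (a) ($p=1$) the bracket tends to $\lambda+l_{\infty}<\lambda_{\infty}+l_{\infty}<0$, while in case (b) ($p<1$) it tends to $-\infty$ since $t_{k}^{1-p}\to+\infty$ and $f(t_{k})/t_{k}\to l_{\infty}<0$. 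Fixing such a large $k$, introduce the truncated nonlinearity
$$f_{k}(t)=\begin{cases} 0,&t\le 0,\\ \lambda t^{p}+f(t),& 0<t<t_{k},\\ \lambda t_{k}^{p}+f(t_{k}),& t\ge t_{k},\end{cases}\qquad F_{k}(t)=\int_{0}^{t}f_{k}(\tau)\,d\tau,$$
and the associated energy $\mathcal{E}_{k}(u)=\tfrac12\|u\|_{\mX}^{2}-\int_{\Omega}F_{k}(u)\,dx$ on $\mX$.

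First I would solve the truncated problem by minimization. Since $f_{k}\le 0$ on $[t_{k},\infty)$ by the sign observation above, $F_{k}$ is nonincreasing there and continuous on $[0,t_{k}]$, hence bounded above on $\mathbb{R}$; this yields coercivity of $\mathcal{E}_{k}$, and weak lower semicontinuity follows from that of the quadratic term together with the weak continuity of $u\mapsto\int_{\Omega}F_{k}(u)$ (compact embedding $\mX\hookrightarrow L^{q}(\Omega)$ for subcritical $q$ and the at most linear growth of $F_{k}$). So $\mathcal{E}_{k}$ has a global minimizer $u_{k}\in\mX$, a weak solution of $(-\Delta)^{s}u_{k}=f_{k}(u_{k})$ in $\Omega$, $u_{k}=0$ in $\R\setminus\Omega$. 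To recover a solution of \eqref{eq1-1} I would prove $0\le u_{k}\le t_{k}$ a.e.\ by the usual testing: with $\varphi=\max\{-u_{k},0\}$ the right-hand side vanishes while the left-hand side is $\le -\|\varphi\|_{\mX}^{2}$ (pointwise inequality $(a-b)(a^{-}-b^{-})\le-|a^{-}-b^{-}|^{2}$), forcing $u_{k}\ge 0$; with $\varphi=(u_{k}-t_{k})^{+}$, using $u_{k}(x)-u_{k}(y)=(u_{k}-t_{k})(x)-(u_{k}-t_{k})(y)$ the left-hand side is $\ge\|\varphi\|_{\mX}^{2}$ while the right-hand side is $\le 0$ because $f_{k}(u_{k})=\lambda t_{k}^{p}+f(t_{k})\le 0$ on $\{u_{k}>t_{k}\}$, forcing $u_{k}\le t_{k}$. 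On the range of $u_{k}$ one has $f_{k}(u_{k})=\lambda u_{k}^{p}+f(u_{k})$, so $u_{k}$ solves \eqref{eq1-1}; once $u_{k}\not\equiv 0$ is known, the strong maximum principle for $(-\Delta)^{s}$ gives $u_{k}>0$ in $\Omega$.

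The remaining and main point is $\|u_{k}\|_{L^{\infty}(\Omega)}\to\infty$, which simultaneously yields $u_{k}\not\equiv 0$ and infinitely many distinct solutions. I would exploit both halves of $(f_{\infty})$: from $\liminf_{t\to\infty}F(t)/t^{2}>-\infty$ and continuity one gets $F(t)\ge -c_{1}(1+t^{2})$ for all $t\ge 0$, and from $\limsup_{t\to\infty}F(t)/t^{2}=+\infty$ a sequence $\sigma_{j}\to+\infty$ with $F(\sigma_{j})/\sigma_{j}^{2}\to+\infty$. Fix $\phi\in\mX$ with $0\le\phi\le 1$ and $\phi\equiv 1$ on a set $\Omega'\subset\subset\Omega$ of positive measure, and for each $k$ put $\sigma(k)=\max\{\sigma_{j}:\sigma_{j}\le t_{k}\}\to+\infty$, so $\sigma(k)\phi\le t_{k}$ and $F_{k}(\sigma(k)\phi)=\tfrac{\lambda}{p+1}(\sigma(k)\phi)^{p+1}+F(\sigma(k)\phi)$. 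Estimating $F$ from below off $\Omega'$ and using $\phi\equiv 1$ on $\Omega'$,
$$\mathcal{E}_{k}(\sigma(k)\phi)\le \sigma(k)^{2}\Bigl(C_{1}+\tfrac{|\lambda|}{p+1}\sigma(k)^{p-1}\!\!\int_{\Omega}\phi^{p+1}+\tfrac{c_{1}|\Omega|}{\sigma(k)^{2}}-|\Omega'|\,\tfrac{F(\sigma(k))}{\sigma(k)^{2}}\Bigr)\longrightarrow -\infty,$$
since $p\le 1$ keeps $\sigma(k)^{p-1}$ bounded while $F(\sigma(k))/\sigma(k)^{2}\to+\infty$. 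Hence $\mathcal{E}_{k}(u_{k})=\inf_{\mX}\mathcal{E}_{k}\to-\infty$; in particular $\mathcal{E}_{k}(u_{k})<0=\mathcal{E}_{k}(0)$, so $u_{k}\not\equiv 0$. Were $\|u_{k}\|_{L^{\infty}(\Omega)}$ bounded along a subsequence, then $\mathcal{E}_{k}(u_{k})=\tfrac12\|u_{k}\|_{\mX}^{2}-\tfrac{\lambda}{p+1}\int_{\Omega}u_{k}^{p+1}-\int_{\Omega}F(u_{k})$ would be bounded below, a contradiction; thus $\limsup_{k}\|u_{k}\|_{L^{\infty}(\Omega)}=+\infty$, and a subsequence provides distinct positive solutions of \eqref{eq1-1} with $L^{\infty}$-norm tending to $\infty$.

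I expect the main obstacle to be precisely the sign condition $\lambda t_{k}^{p}+f(t_{k})\le 0$ that closes the bound $u_{k}\le t_{k}$: it is exactly what restricts $\lambda<\lambda_{\infty}<-l_{\infty}$ when $p=1$ and forces $p<1$ when $\lambda$ is arbitrary, and it depends on selecting the $t_{k}$ along which $f(t_{k})/t_{k}$ stays bounded away from $0$ (available from $\liminf_{t\to\infty}f(t)/t=l_{\infty}<0$). A secondary difficulty is keeping the term $\lambda u^{p}$ from destroying the divergence $\mathcal{E}_{k}(\sigma(k)\phi)\to-\infty$, which is again why case (b) needs $p<1$ (for $p=1$ the bound $\lambda<\lambda_{\infty}$ takes over), and the routine but necessary check that the fractional truncation testing with $u_{k}^{-}$ and $(u_{k}-t_{k})^{+}$ goes through with the nonlocal Gagliardo seminorm.
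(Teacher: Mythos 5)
Your argument is correct, and although it follows the same overall skeleton as the paper's proof (truncate at heights tending to infinity, minimize the truncated energy, check that the minimizer stays below the truncation level so that it solves \eqref{eq1-1}, and drive the minimal energies to $-\infty$ with test functions $\sigma(k)\phi$ supported where $F(\sigma_j)/\sigma_j^{2}\to+\infty$, so that infinitely many distinct solutions with $\|u_{k}\|_{L^{\infty}(\Omega)}\to\infty$ appear), it implements the crucial middle step by a genuinely different route. The paper first rewrites \eqref{eq1-1} as $(-\Delta)^{s}u+\mu u=g(u)$ with an added linear term $\mu u$, $\mu>0$, truncates $g$ by $g_{k}(t)=g(\min\{\eta_{k},t\})$, and minimizes over the convex $L^{\infty}$-ball $E_{\eta_{k}}$; it then needs the sign condition $g\le 0$ on whole intervals $[\delta_{k},\eta_{k}]$ (extracted from $l_{\infty}<0$ by continuity) to push the \emph{constrained} minimizer into $[0,\delta_{k}]$ via the comparison $u=\gamma\circ u_{0}$ of Theorem \ref{th2-1}, followed by a one-sided directional-derivative argument (which tacitly tests only with $v\in L^{\infty}$) to conclude that this constrained minimizer is a critical point; the case $p=1$ or $p<1$ and the restriction on $\lambda$ enter through the verification of $(G_3')$ for the auxiliary problem \eqref{eq2-1}, and Theorem \ref{th2-3} is then quoted as a black box. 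You instead truncate the full nonlinearity $\lambda t^{p}+f(t)$ by constant continuation at the single point $t_{k}$, minimize \emph{globally} on $\mX$ --- so the minimizer is automatically a critical point --- and recover $0\le u_{k}\le t_{k}$ by the nonlocal Stampacchia tests with $\max\{-u_{k},0\}$ and $(u_{k}-t_{k})^{+}$, which require the sign of $\lambda t_{k}^{p}+f(t_{k})$ only at the truncation point itself (your verification of the pointwise Gagliardo inequalities is exactly of the type \eqref{eq1-8} and is sound, as is the membership $(u_{k}-t_{k})^{+}\in\mX$ since $u_{k}=0<t_{k}$ outside $\Omega$). Your route buys a cleaner passage from ``minimizer'' to ``solution'' and dispenses with both the added $\mu u$ term and the interval sign condition; the paper's route, by channelling everything through the generic auxiliary problem, reuses the same two lemmas for all four main theorems. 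Both proofs isolate the hypotheses in the same way: the single-point inequality $\lambda t_{k}^{p}+f(t_{k})<0$ is where $\lambda<\lambda_{\infty}<-l_{\infty}$ (case (a)) or $p<1$ (case (b)) is used, and $p\le 1$ is also what keeps $|\lambda|\sigma(k)^{p+1}$ of order at most $\sigma(k)^{2}$ so that $F(\sigma(k))/\sigma(k)^{2}\to+\infty$ still forces $\mathcal{E}_{k}(\sigma(k)\phi)\to-\infty$.
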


\begin{theorem}\label{th1-4}
Let $f$ satisfies $(f_{\infty})$ and $p>1$. Then there exists $\lambda_{k}>0$
such that problem \eqref{eq1-1} has at least $k$ distinct weak solutions
$u_{1,\lambda}, u_{2,\lambda},\cdot\cdot\cdot,u_{k,\lambda}$ for every
$\lambda\in[-\lambda_{k},\lambda_{k}]$ and $k\in \mathbb{N}$. Moreover,
for any $i\in \{1,2,\cdot\cdot\cdot,k\}$,
\begin{equation}\label{eq1-6}
\|u_{i,\lambda}\|_{L^{\infty}(\Omega)}>i-1.
\end{equation}
\end{theorem}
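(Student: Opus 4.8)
The plan is to transplant the truncation scheme behind Theorem \ref{th1-2} to the behaviour of $f$ at infinity and couple it with the direct method of the calculus of variations. To \eqref{eq1-1} one associates the energy functional $\mathcal{J}_{\lambda}(u)=\frac12\|u\|_{\mX}^{2}-\frac{\lambda}{p+1}\int_{\Omega}(u^{+})^{p+1}dx-\int_{\Omega}F(u^{+})dx$, whose nonnegative critical points are weak solutions of \eqref{eq1-1}. Since $f$ is subject to no growth restriction (it may be critical or supercritical), $\mathcal{J}_{\lambda}$ is in general neither well defined nor coercive, so I would not work with it directly but with truncated versions for which the direct method does apply.

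First, by $(f_{\infty})$ and Remark \ref{r1-1}(2) I would fix a strictly increasing sequence $a_{j}\to+\infty$ with $f(a_{j})<0$; from $\liminf_{t\to\infty}F(t)/t^{2}>-\infty$ and the continuity of $F$ there are $c_{1},c_{2}>0$ with $F(t)\ge-c_{1}t^{2}-c_{2}$ for all $t\ge0$; and from $\limsup_{t\to\infty}F(t)/t^{2}=+\infty$ there exist arbitrarily large numbers at which $F(t)/t^{2}$ is as large as we please. Fix $k\in\mathbb{N}$. For $j=1,\dots,k$ I truncate both nonlinearities at level $a_{j}$: let $f_{j}(t)=0$ for $t\le0$, $f_{j}(t)=f(t)$ for $0\le t\le a_{j}$, $f_{j}(t)=f(a_{j})$ for $t\ge a_{j}$, and $g_{j}(t)=(\min\{t^{+},a_{j}\})^{p}$, with primitives $F_{j},G_{j}$; these functions are continuous and bounded, hence $F_{j},G_{j}$ grow at most linearly. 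Therefore $\mathcal{J}_{\lambda,j}(u)=\frac12\|u\|_{\mX}^{2}-\lambda\int_{\Omega}G_{j}(u)dx-\int_{\Omega}F_{j}(u)dx$ is of class $C^{1}$ on $\mX$, coercive, and sequentially weakly lower semicontinuous, because the quadratic term is weakly l.s.c. while the remaining two terms are weakly continuous thanks to the compact embedding of $\mX$ into $L^{2}(\Omega)$. By the direct method $\mathcal{J}_{\lambda,j}$ attains its infimum at some $u_{j,\lambda}\in\mX$, which is hence a critical point, i.e. a weak solution of $(-\Delta)^{s}u=\lambda g_{j}(u)+f_{j}(u)$ in $\Omega$, $u=0$ in $\R\setminus\Omega$.

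Next I would derive the a priori bounds $0\le u_{j,\lambda}\le a_{j}$ a.e. in $\Omega$. Testing the weak formulation with $u_{j,\lambda}^{-}\in\mX$ and using that $f_{j},g_{j}$ vanish on $(-\infty,0]$ together with $\langle u,u^{-}\rangle_{\mX}\le-\|u^{-}\|_{\mX}^{2}$ yields $u_{j,\lambda}\ge0$; testing with $(u_{j,\lambda}-a_{j})^{+}\in\mX$ and using $\langle u,(u-a_{j})^{+}\rangle_{\mX}\ge\|(u-a_{j})^{+}\|_{\mX}^{2}$ yields $\|(u_{j,\lambda}-a_{j})^{+}\|_{\mX}^{2}\le(\lambda a_{j}^{p}+f(a_{j}))\int_{\Omega}(u_{j,\lambda}-a_{j})^{+}dx$, and since $f(a_{j})<0$ the right-hand side is nonpositive whenever $|\lambda|\le-f(a_{j})/a_{j}^{p}$, forcing $u_{j,\lambda}\le a_{j}$. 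Under these bounds $g_{j}(u_{j,\lambda})=u_{j,\lambda}^{p}$ and $f_{j}(u_{j,\lambda})=f(u_{j,\lambda})$, so $u_{j,\lambda}$ solves \eqref{eq1-1}; strict positivity in $\Omega$ follows from $u_{j,\lambda}\ge0$, $u_{j,\lambda}\not\equiv0$ and $f(0)=0$: by standard regularity for $(-\Delta)^{s}$ one may evaluate $(-\Delta)^{s}u_{j,\lambda}$ pointwise, and at an interior zero of $u_{j,\lambda}$ (necessarily a minimum) one would get $(-\Delta)^{s}u_{j,\lambda}<0$, contradicting $\lambda\cdot0+f(0)=0$.

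The heart of the matter is the energy comparison giving the $L^{\infty}$-lower bound and distinctness. On the set $\{v\in\mX:\|v\|_{L^{\infty}(\Omega)}\le a_{j-1}\}$, with $a_{0}:=0$, discarding $\frac12\|v\|_{\mX}^{2}\ge0$ and using $0\le G_{j}(v)\le a_{j-1}^{p+1}/(p+1)$ and $|F_{j}(v)|\le\max_{[0,a_{j-1}]}|F|$ one gets the uniform lower bound $\mathcal{J}_{\lambda,j}(v)\ge-\widetilde{K}_{j-1}$ with $\widetilde{K}_{j-1}=\lambda_{k}a_{j-1}^{p+1}|\Omega|/(p+1)+|\Omega|\max_{[0,a_{j-1}]}|F|$. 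On the other hand, fixing once and for all $w_{0}\in C_{c}^{\infty}(\Omega)$ with $0\le w_{0}\le1$ and $w_{0}\equiv1$ on a nonempty open set $\Omega_{0}$ with $\overline{\Omega_{0}}\subset\Omega$, the choice $v=b_{j}w_{0}$ with $b_{j}\in(a_{j-1},a_{j})$ (so $\|v\|_{L^{\infty}(\Omega)}=b_{j}<a_{j}$) gives, using $F(b_{j}w_{0})=F(b_{j})$ on $\Omega_{0}$ and $F(t)\ge-c_{1}t^{2}-c_{2}$ elsewhere, $\mathcal{J}_{\lambda,j}(u_{j,\lambda})\le\mathcal{J}_{\lambda,j}(b_{j}w_{0})\le b_{j}^{2}A+|\lambda|b_{j}^{p+1}B+c_{2}|\Omega|-F(b_{j})|\Omega_{0}|$ for constants $A,B>0$ depending only on $w_{0},c_{1},N,s,\Omega,p$. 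I would then choose, recursively in $j$, first $b_{j}>\max\{a_{j-1},j\}$ so large that $F(b_{j})|\Omega_{0}|>b_{j}^{2}A+c_{2}|\Omega|+|\Omega|\max_{[0,a_{j-1}]}|F|+1$ (possible since $\limsup_{t\to\infty}F(t)/t^{2}=+\infty$), then $a_{j}>b_{j}$ with $f(a_{j})<0$, and finally $\lambda_{k}>0$ so small that $\lambda_{k}\le\min_{1\le j\le k}(-f(a_{j})/a_{j}^{p})$ and $\lambda_{k}b_{j}^{p+1}B+\lambda_{k}a_{j-1}^{p+1}|\Omega|/(p+1)<1$ for all $j\le k$. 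With these choices, for every $\lambda\in[-\lambda_{k},\lambda_{k}]$ the two estimates give $\mathcal{J}_{\lambda,j}(u_{j,\lambda})<-\widetilde{K}_{j-1}$, which is incompatible with $\|u_{j,\lambda}\|_{L^{\infty}(\Omega)}\le a_{j-1}$; hence $\|u_{j,\lambda}\|_{L^{\infty}(\Omega)}>a_{j-1}\ge j-1$, and since also $\|u_{j,\lambda}\|_{L^{\infty}(\Omega)}\le a_{j}$ the sup norms lie in the pairwise disjoint intervals $(a_{j-1},a_{j}]$, so $u_{1,\lambda},\dots,u_{k,\lambda}$ are $k$ distinct weak solutions. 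The main obstacle is exactly this last step: because $p>1$ the term $|\lambda|b_{j}^{p+1}$ is superquadratic in $b_{j}$ and competes with the oscillatory growth of $F$, which is why $b_{j}$ must be fixed before $\lambda_{k}$ and why $\lambda_{k}$ is forced to shrink as $k$ grows, so that — unlike in Theorem \ref{th1-3}(b) — no single $\lambda\neq0$ serves all $k$ at once.
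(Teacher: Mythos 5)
Your proof is correct, but it takes a genuinely different route from the paper's. The paper funnels Theorem \ref{th1-4} through the auxiliary problem of Section 3: it sets $\mu=\lambda_{\infty}$, $g(u)=\lambda u^{p}+\lambda_{\infty}u+f(u)$, truncates $g$ at levels $\eta_{k}$ coming from whole intervals $[\delta_{k},\eta_{k}]$ on which $g\le 0$, and invokes Theorem \ref{th2-1} --- constrained minimization over the convex set $E_{\eta_{k}}=\{\|u\|_{L^{\infty}(\Omega)}\le\eta_{k}\}$, with a separate comparison argument (composition with the Lipschitz cut-off $\gamma$) showing the constrained minimizer lies in $[0,\delta_{k}]$ and is therefore a free critical point; distinctness and the bound $\|u_{i,\lambda}\|_{L^{\infty}(\Omega)}>i-1$ then come from separating energy levels by the sequences $\theta_{i},\alpha_{i},\beta_{i}$ and from the observation that a solution with small sup-norm would lie in a smaller $E_{\eta}$ and violate minimality there. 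You instead truncate \emph{both} nonlinearities at single points $a_{j}$ with $f(a_{j})<0$, minimize the truncated functional over all of $\mX$ by the direct method, and recover $0\le u_{j,\lambda}\le a_{j}$ a posteriori by testing with $u^{-}$ and $(u-a_{j})^{+}$; this is where $f(a_{j})<0$ and the smallness of $|\lambda|$ enter, and it replaces the paper's interval condition $(g_{2})$ and the whole constrained-minimization machinery. Your $L^{\infty}$ lower bound --- comparing the very negative minimum value against a uniform lower bound for the energy on $\{\|v\|_{L^{\infty}(\Omega)}\le a_{j-1}\}$ --- is the same mechanism as the paper's but more self-contained, and your test function $b_{j}w_{0}$ plays exactly the role of the paper's $z_{\zeta}$ from Lemma \ref{l2-3}. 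What the paper's heavier setup buys is independence of the smallness of $\lambda$ (it is what makes Theorem \ref{th1-3} work for \emph{all} $\lambda$); for Theorem \ref{th1-4}, where $\lambda_{k}$ may shrink with $k$, your more elementary route suffices. The one soft spot is strict positivity $u>0$ in $\Omega$, which you only sketch via a pointwise strong maximum principle requiring interior regularity; since Theorem \ref{th2-1} likewise yields only $u\ge 0$, this is not a gap relative to the paper's own standard.
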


As we already pointed out, the method developed in the present paper is applicable
in more general setting; not only the type of the domain $\Omega$ can vary with various boundary condition,
but also equations involving the Laplacian and $p$-Laplacian can be considered.
Using the ideal of \cite{ak2008,km2010}, we prove that the nonlocal problem \eqref{eq1-1}
has arbitrarily  many positive solutions.
In addition, we extend the truncation methods of \cite{km2010}
such that it is suitable for the nonlocal elliptic equations.

Since we are looking for positive solutions, we take as usual $f(u)$ defined on all $u\in \mathbb{R}$,
making $f(u)=0$ if $u\leq 0$.
Thus, as we know that the critical points of the functional associated with Eq.\eqref{eq1-1}, i.e.,
 $$\mathcal{J}(u)=\frac{1}{2}\int\limits_{\mathbb{R}^{N}\times\mathbb{R}^{N}}\frac{|u(x)-u(y)|^{2}}{|s-y|^{N+2s}}\,dxdy
 -\frac{\lambda}{p+1}\int\limits_{\Omega} u^{p+1}dx-\int\limits_{\Omega}F(u)dx,$$
are weak solutions of the equation \eqref{eq1-1}, if $u$ is a critical point of $\mathcal{J}$ then
\begin{equation*}\label{eq1-7}
\aligned
0
&=\langle\mathcal{J}'(u)\,,\,u^{-}\rangle\\
&=\int\limits_{\mathbb{R}^{N}\times\mathbb{R}^{N}}\frac{(u(x)-u(y))(u^{-}(x)-u^{-}(y))}{|x-y|^{N+2s}}dxdy
-\lambda\int\limits_{\Omega}u^{p}\,u^{-}dx-\int\limits_{\Omega}f(u)u^{-}dx\\
&\geq \int\limits_{\mathbb{R}^{N}\times\mathbb{R}^{N}}\frac{|u^{-}(x)-u^{-}(y)|^{2}}{|x-y|^{N+2s}}dxdy
-\lambda\int\limits_{\Omega}u^{p}\,u^{-}dx-\int\limits_{\Omega}f(u)u^{-}dx,\\
\endaligned
\end{equation*}
where $u^{-}=\min\{u\,,\,0\}$. This implies that
$$\|u^{-}\|^{2}_{\mX}=\int\limits_{\mathbb{R}^{N}\times\mathbb{R}^{N}}\frac{|u^{-}(x)-u^{-}(y)|^{2}}{|x-y|^{N+2s}}dxdy=0.$$
Thus, necessarily we have $u\geq 0$.
Here we use the following inequality
\begin{equation}\label{eq1-8}
(u(x)-u(y))(u^{-}(x)-u^{-}(y))\geq |u^{-}(x)-u^{-}(y)|^{2}\quad \forall \,\,\,x,y\in\mathbb{R}^{N},
\end{equation}
where $u^{-}(x)=\min\{u(x)\,,\,0\}$. To check \eqref{eq1-8}, since the role of $x$
and $y$ is symmetric, we can always suppose $u(x)\geq u(y)$.
Also \eqref{eq1-8} is clearly an identity when $x,\,y\in \{t:\,u(t)>0\}$ and
when $x,\,y\in\{t:\,u(t)\leq 0\}$. So it only remains to check \eqref{eq1-8} when
$x\in\{t:\,u(t)>0\}$ and $y\in \{t:\,u(t)\leq 0\}$. In this case
$$u^{-}(x)-u^{-}(y)=0-u^{-}(y)=-u(y)\leq  u(x)-u(y).$$
We multiply by $u^{-}(x)-u^{-}(y)=-u(y)\geq 0$ from both side,
and then obtain \eqref{eq1-8}.

This paper is organized as follows.
In Section 2, we introduce a variational setting of the problem
and present some preliminary results.
Section 3 is devoted to study an auxiliary problem.
This section plays an important role in the proof of our main results.
In Section 4, we give the proof of Theorems \ref{th1-1} and \ref{th1-2}.
Finally, the proof of Theorems \ref{th1-3} and \ref{th1-4} are given in Section 5,
and some more general cases are given in Section 6.

\vspace{2mm}

Here we list some notations which will be used throughout the paper.
\begin{itemize}
\item[$\cdot$] The letter $x$ represents a variable in the $\mathbb{R}^{N}$ or in $\Omega$.
\item[$\cdot$]$L^{p}(\Omega)$ $(1\leq p\leq \infty)$ denotes the usual
Sobolev space with norm $\|\cdot\|_{L^{p}(\Omega)}$.
\item[$\cdot$]$|\Omega|$ denotes the Lebesgue measure of the set $\Omega$.
\item[$\cdot$] Big $O$ and small $o$  describe the limit behavior of a certain quantity as $n\to\infty$.
\item[$\cdot$] We denote by $B_{r}(x_{0})=\{x\in \mathbb{R}^{N}:\,|x-x_{0}|<r\}$ the ball at
 each $x_{0}\in \mathbb{R}^{N}$ with radius $r>0$.
\end{itemize}

\section{Preliminaries}
In this section we first recall the background of the fractional Laplacian.
We refer to \cite{berf2014,sr2014-1,sr2014-2,s2013-1,s2013-2,s2012} for the details.

For $s\in (0\,,\,1)$, we denote the classical fractional Sobolev space $H^{s}(\mathbb{R}^{N})$
with the Gagliardo norm
$$\|u\|_{H^{s}(\mathbb{R}^{N}}=\|u\|_{L^{2}(\mathbb{R}^{N})}+
\Big(\int\limits_{\mathbb{R}^{N}\times\mathbb{R}^{N}}\frac{|u(x)-u(y)|^{2}}{|x-y|^{N+2s}}dxdy\Big)^{\frac{1}{2}}.$$

Let $\Omega$ be an open set in $\mathbb{R}^{N}$.
Due to the nonlocal character of the fractional Laplacian, we will consider
the space $\mX$ defined as follows
$$X^{s}_{0}(\Omega)=\{u\in X:\,\,u=0\,\,\text{a.e. in}\,\,\mathbb{R}^{N}\setminus\Omega\}.$$
We refer to \cite{s2012,sv2014,sv2013} for a general definition of $\mX$
and its properties and to \cite{sp2013,v2011} for an account of the properties of $H^{s}(\mathbb{R}^{N})$.

In $\mX$ we can consider the following norm
 $$\|u\|_{\mX}=
\Big(\int\limits_{\mathbb{R}^{N}\times\mathbb{R}^{N}}\frac{|u(x)-u(y)|^{2}}{|x-y|^{N+2s}}dxdy\Big)^{\frac{1}{2}}.$$
 We also recall that $(\mX\,,\,\|\cdot\|_{\mX})$ is a Hilbert space, with scalar product
 $$\langle u\,,\,v\rangle_{\mX}=
\int\limits_{\mathbb{R}^{N}\times\mathbb{R}^{N}}\frac{(u(x)-u(y))(v(x)-v(y))}{|x-y|^{N+2s}}dxdy,\quad \forall u,\,v\in \mX.$$

Observe that by \cite{sp2013} we have the following identity
$$\|u\|_{\mX}=\|(-\Delta)^{\frac{s}{2}}u\|_{L^{2}(\mathbb{R}^{N})},$$
this leads us to establish as a definition for the solution to our problem.
\begin{definition}\label{def1-1}
We say that $u\in \mX$ is a weak solution of \eqref{eq1-1} if
$$\int\limits_{\mathbb{R}^{N}\times\mathbb{R}^{N}}\frac{(u(x)-u(y))(\varphi(x)-\varphi(y))}{|x-y|^{N+2s}}dxdy=
\lambda\int\limits_{\Omega}u^{p}\varphi dx+\int\limits_{\Omega}f(u)\varphi dx$$
holds for every $\varphi\in \mX$.
\end{definition}

 Define the energy function $\mathcal{J}:\,\mX\to \mathbb{R}$ by
 $$\mathcal{J}(u)=\frac{1}{2}\int\limits_{\mathbb{R}^{N}\times\mathbb{R}^{N}}\frac{|u(x)-u(y)|^{2}}{|x-y|^{N+2s}}\,dxdy
 -\frac{\lambda}{p+1}\int\limits_{\Omega} u^{p+1}dx-\int\limits_{\Omega}F(u)dx,$$
where $F(u)=\int_{0}^{u}f(t)dt.$

In order to carry out the nonlinear analysis, from \cite{s2013-2,s2012},
we have the following embedding theorem.
\begin{lemma}\label{l2-2}
\begin{itemize}
\item[{\em (i)}] $C^{2}_{0}(\Omega)\hookrightarrow X_{0}^{s}(\Omega)$
and $\mX\hookrightarrow H^{s}(\mathbb{R}^{N})$;
\item[{\em (ii)}] If $\Omega$ has a Lipschitz boundary, the embedding $\mX\hookrightarrow L^{q}(\Omega)$
is compact for any $q\in [1\,,\,2^{*}_{s})$, and the embedding $\mX\hookrightarrow L^{2^*_{s}}(\Omega)$ is continuous,
 where $2^*_{s}:=\frac{2N}{N-2s}$ ($N>2s$) is critical Sobolev exponent.
\end{itemize}
\end{lemma}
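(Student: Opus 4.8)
The plan is to deduce both assertions from two classical facts about fractional Sobolev spaces (see \cite{sp2013,s2012,s2013-2}): the fractional Sobolev inequality $\|u\|_{L^{2^{*}_{s}}(\R)}\le C_{N,s}[u]_{H^{s}(\R)}$, where $[u]^{2}_{H^{s}(\R)}=\int_{\R\times\R}\frac{|u(x)-u(y)|^{2}}{|x-y|^{N+2s}}\,dx\,dy=\|u\|_{\mX}^{2}$ for $u\in\mX$, and the fractional Rellich--Kondrachov compactness theorem on bounded Lipschitz domains. Everything else is reduced to elementary integral estimates.

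For (i), to check $C^{2}_{0}(\Omega)\hookrightarrow\mX$ I would fix $u\in C^{2}_{0}(\Omega)$ and split the Gagliardo integral over $\{|x-y|<1\}$ and $\{|x-y|\ge 1\}$. On the first region the global Lipschitz bound $|u(x)-u(y)|\le\|\nabla u\|_{L^{\infty}}|x-y|$ dominates the integrand by $\|\nabla u\|^{2}_{L^{\infty}}|x-y|^{2-N-2s}$, which is integrable near the diagonal precisely because $N+2s-2<N$ for $s<1$; since the integrand vanishes unless $x$ or $y$ lies in the compact set $\mathrm{supp}\,u$, this contribution is finite. On the second region I would use $|u(x)-u(y)|^{2}\le 4\|u\|^{2}_{L^{\infty}}$, the compact support of $u$, and $\int_{|x-y|\ge 1}|x-y|^{-N-2s}\,dy<\infty$. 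Hence $u\in H^{s}(\R)$ with $u=0$ outside $\Omega$, i.e.\ $u\in\mX$. For $\mX\hookrightarrow H^{s}(\R)$ it remains to control $\|u\|_{L^{2}(\R)}=\|u\|_{L^{2}(\Omega)}$; since $|\Omega|<\infty$, Hölder's inequality gives $\|u\|_{L^{2}(\Omega)}\le|\Omega|^{s/N}\|u\|_{L^{2^{*}_{s}}(\Omega)}\le|\Omega|^{s/N}\|u\|_{L^{2^{*}_{s}}(\R)}$, and the fractional Sobolev inequality then yields $\|u\|_{L^{2}(\R)}\le C|\Omega|^{s/N}\|u\|_{\mX}$. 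Adding the seminorm back, $\|u\|_{H^{s}(\R)}\le(1+C|\Omega|^{s/N})\|u\|_{\mX}$, which is the desired (Poincaré-type) continuous embedding.

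For (ii), the continuous embedding $\mX\hookrightarrow L^{2^{*}_{s}}(\Omega)$ is immediate from the fractional Sobolev inequality applied to $u\in\mX$ extended by zero: $\|u\|_{L^{2^{*}_{s}}(\Omega)}\le\|u\|_{L^{2^{*}_{s}}(\R)}\le C_{N,s}\|u\|_{\mX}$. For the compactness when $q\in[1,2^{*}_{s})$, I would take a bounded sequence $\{u_{n}\}\subset\mX$; by part (i) it is bounded in $H^{s}(\R)$, so after passing to a subsequence $u_{n}\rightharpoonup u$ in $\mX$ and weakly in $H^{s}(\R)$. Since every $u_{n}$ is supported in $\overline{\Omega}$, it is enough to prove $u_{n}\to u$ strongly in $L^{q}(\Omega')$ for some bounded Lipschitz domain $\Omega'$ with $\overline{\Omega}\subset\Omega'$, and this is exactly the fractional Rellich--Kondrachov theorem for $H^{s}(\Omega')$; alternatively one interpolates between the uniformly bounded $L^{2^{*}_{s}}$ norm and strong $L^{1}$ convergence.

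The main obstacle is precisely this compactness step, i.e.\ the fractional analogue of the Rellich--Kondrachov theorem: it is where the Lipschitz regularity of $\partial\Omega$ enters, and it is the only point where one must genuinely invoke (or reproduce) the compactness machinery of \cite{sp2013,s2012} rather than rely on Hölder's inequality, the near-diagonal integrability bound $N+2s-2<N$, and the fractional Sobolev inequality.
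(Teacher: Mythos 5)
Your argument is correct, but it is worth noting that the paper itself gives no proof of this lemma at all: it simply quotes the statement from \cite{s2013-2,s2012} (and implicitly \cite{sp2013}), so there is no ``paper's approach'' to compare against beyond the citation. What you have written is a faithful and accurate reconstruction of the standard proofs in those references. The verification that $C^{2}_{0}(\Omega)\subset\mX$ by splitting the Gagliardo integral at $|x-y|=1$ is exactly the classical argument (the near-diagonal integrability hinging on $2-2s>0$ and the far region on compact support plus $\int_{|x-y|\ge 1}|x-y|^{-N-2s}\,dy<\infty$ are both handled correctly, provided in the far region you also restrict to the set where at least one of $x,y$ lies in $\mathrm{supp}\,u$, as you indicate). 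The Poincar\'e-type chain $\|u\|_{L^{2}}\le|\Omega|^{s/N}\|u\|_{L^{2^{*}_{s}}}\le C|\Omega|^{s/N}\|u\|_{\mX}$ is the right way to upgrade the seminorm to the full $H^{s}(\R)$ norm, and the exponent $s/N$ is computed correctly. You also correctly identify the one step that cannot be reduced to elementary estimates, namely the fractional Rellich--Kondrachov theorem, which is precisely the content delegated to \cite{sp2013} (and where the Lipschitz hypothesis on $\partial\Omega$ is used); note only that your ``alternative'' route via interpolation against strong $L^{1}$ convergence is not really an alternative, since producing that $L^{1}$ convergence already requires the same compactness input. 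In short, the proposal is complete modulo the cited compactness theorem, exactly as the paper intends.
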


In view of $(f_{0}) $ or $(f_{\infty})$ and the Sobolev embedding $\mX\hookrightarrow L^{\frac{2N}{N-2s}}(\Omega)$,
it is not difficult to see that $\mathcal{J}\in C^{1}$. Moreover, if $u\in \mX$
is a critical points of $\mathcal{J}$, then $u$
is a weak solution of problem \eqref{eq1-1}. The converse is also true.

\section{Some auxiliary results}
In this section, we consider the following generic problem
\begin{equation}\label{eq2-1}
\left\{\begin{array}{lll}
&(-\Delta)^{s}u+\mu u=g(u),\,\,u\geq 0\quad &\text{in}\,\,\Omega,\\
&u=0\quad &\text{in}\,\,\mathbb{R}^{N}\setminus\Omega,\\
\end{array}\right.
\end{equation}
where $\mu>0$ and the function $g$ satisfies
\begin{itemize}
\item[$(g_{1})$] $g:\,\mathbb{R}^{+}\to \mathbb{R}$ is a continuous, bounded function and $g(0)=0$.
\item[$(g_{2})$] There exist $\eta>\delta>0$ such that $g(t)\leq0$ for  all
$t\in [\delta\,,\,\eta]$.
\end{itemize}

Due to $(g_1)$ and looking for the positive solutions, we may extended $g$ continuously to
the whole $\mathbb{R}$ and define $g(u)=0$ for all $u\leq 0$.

The corresponding energy function $\mathcal{E}:\,\mX\to \mathbb{R}$ of the problem \eqref{eq2-1}
is defined as follows:
\begin{equation*}\label{eq2-2}
\mathcal{E}(u)=\frac{1}{2}\int\limits_{\mathbb{R}^{N}\times\mathbb{R}^{N}}\frac{|u(x)-u(y)|^{2}}{|x-y|^{N+2s}}\,dxdy+
\frac{\mu}{2}\int\limits_{\Omega}|u|^{2}dx-\int\limits_{\Omega}G(u)dx,\quad \forall u\in \mX ,
\end{equation*}
 where $G(u)=\displaystyle{\int}^{u}_{0}g(t)dt$.  It is easy to see that $\mathcal{E}$
 is well defined. Indeed, by the mean value theorem and Sobolev embedding theorem,
 for all $u\in \mX$ we have
 \begin{equation}\label{eq2-2*}
 \int\limits_{\Omega}G(u)dx
 =\int\limits_{\Omega}\int\limits_{0}^{u}g(t)dt dx\leq C_{1}\,\sup\limits_{u\in \mX}|g||\Omega|^{\frac{1}{2}}\,\|u\|_{\mX}
 <\infty,
\end{equation}
where $C_{1}>0$  is the constant of Sobolev embedding $\mX\hookrightarrow L^{2}(\Omega)$.
 Moreover, standard arguments show that $\mathcal{E}$ is of class $C^{1}$ on  $\mX$.

 From $(g_1)$ and $(g_2)$, we know that the nonlinear term $g$
grows subcritically.
 Then we have the following result.
 \begin{lemma}\label{l2-1}
If $(g_1)$ and $(g_2)$ hold, then the functional $\mathcal{E}$ is coerciveness,  bounded from  below
and satisfies the {\em(PS)}-condition.
 \end{lemma}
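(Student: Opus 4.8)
The plan is to prove coercivity first, since boundedness from below follows immediately and the (PS)-condition then reduces to extracting a convergent subsequence from a bounded one. Since $g$ is bounded, write $M:=\sup_{t\in\mathbb{R}}|g(t)|<\infty$ by $(g_1)$; then for all $u\in\mX$ one has $|G(u(x))|\le M|u(x)|$ pointwise, so by H\"older's inequality and the continuous embedding $\mX\hookrightarrow L^2(\Omega)$ from Lemma~\ref{l2-2},
\[
\int_{\Omega}G(u)\,dx\le M|\Omega|^{1/2}\|u\|_{L^2(\Omega)}\le M\,C_1\,|\Omega|^{1/2}\,\|u\|_{\mX}.
\]
Therefore
\[
\mathcal{E}(u)\ge\frac{1}{2}\|u\|_{\mX}^{2}+\frac{\mu}{2}\|u\|_{L^{2}(\Omega)}^{2}-M\,C_1\,|\Omega|^{1/2}\,\|u\|_{\mX}\ge\frac{1}{2}\|u\|_{\mX}^{2}-M\,C_1\,|\Omega|^{1/2}\,\|u\|_{\mX},
\]
where the middle term was dropped using $\mu>0$. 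The right-hand side is a quadratic in $\|u\|_{\mX}$ with positive leading coefficient, so it tends to $+\infty$ as $\|u\|_{\mX}\to\infty$; this gives coercivity, and since it is bounded below by its minimum over $[0,\infty)$, namely $-\tfrac12 M^2C_1^2|\Omega|$, the functional $\mathcal{E}$ is bounded from below on $\mX$.

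For the (PS)-condition, let $\{u_n\}\subset\mX$ be a Palais--Smale sequence: $\mathcal{E}(u_n)$ is bounded and $\mathcal{E}'(u_n)\to 0$ in $(\mX)^\ast$. Coercivity of $\mathcal{E}$ forces $\{u_n\}$ to be bounded in $\mX$. Since $\mX$ is a Hilbert space, up to a subsequence $u_n\rightharpoonup u$ weakly in $\mX$, and by the compact embedding $\mX\hookrightarrow L^q(\Omega)$ for $q\in[1,2^\ast_s)$ (Lemma~\ref{l2-2}(ii)) we have $u_n\to u$ strongly in $L^2(\Omega)$ and, along a further subsequence, $u_n\to u$ a.e. in $\Omega$. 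To upgrade weak to strong convergence in $\mX$, test the derivative against $u_n-u$: from $\langle\mathcal{E}'(u_n),u_n-u\rangle\to 0$ we get
\[
\langle u_n,u_n-u\rangle_{\mX}=\langle\mathcal{E}'(u_n),u_n-u\rangle-\mu\int_{\Omega}u_n(u_n-u)\,dx+\int_{\Omega}g(u_n)(u_n-u)\,dx.
\]
The first term on the right tends to $0$; the second tends to $0$ because $u_n\to u$ in $L^2(\Omega)$; and the third tends to $0$ since $|g(u_n)|\le M$ and $\|u_n-u\|_{L^1(\Omega)}\le|\Omega|^{1/2}\|u_n-u\|_{L^2(\Omega)}\to 0$. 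Hence $\langle u_n,u_n-u\rangle_{\mX}\to 0$, and combined with $\langle u,u_n-u\rangle_{\mX}\to 0$ (weak convergence) this yields $\|u_n-u\|_{\mX}^2=\langle u_n-u,u_n-u\rangle_{\mX}\to 0$, i.e. $u_n\to u$ strongly in $\mX$. This establishes the (PS)-condition.

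The only mild subtlety — not really an obstacle given $(g_1)$ — is making sure the nonlinear term behaves well under weak convergence; boundedness of $g$ makes the Nemytskii operator $u\mapsto g(u)$ continuous from $L^2(\Omega)$ into every $L^r(\Omega)$ and kills all growth issues, so the critical exponent $2^\ast_s$ never enters in a dangerous way. Note also that hypothesis $(g_2)$ is not needed for this lemma at all; it will be used later to locate the solutions produced, not to prove coercivity or (PS). If one wanted to avoid even invoking the embedding constant explicitly, the estimate \eqref{eq2-2*} already recorded in the text suffices verbatim for the coercivity bound.
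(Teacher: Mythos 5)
Your proof is correct and follows essentially the same strategy as the paper: the same $L^1$-type bound on $\int_\Omega G(u)\,dx$ via boundedness of $g$ and the embedding $\mX\hookrightarrow L^2(\Omega)$ gives coercivity and a lower bound, and the (PS)-condition is obtained from boundedness of the sequence plus the compact embedding. The only (cosmetic) difference is that you test $\mathcal{E}'(u_n)$ against $u_n-u$ directly, whereas the paper tests against $u_n$ and $u$ separately and compares norms; both are standard and your observation that $(g_2)$ is not actually used here matches the paper's proof as well.
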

 \begin{proof}
 From \eqref{eq2-2*}, for all $u\in \mX$ we have
\begin{equation*}\label{eq2-3}
\aligned
\mathcal{E}(u)
&=\frac{1}{2}\int\limits_{\mathbb{R}^{N}\times\mathbb{R}^{N}}\frac{|u(x)-u(y)|^{2}}{|x-y|^{N+2s}}\,dxdy
+\frac{\mu}{2}\int\limits_{\Omega}|u|^{2}dx
-\int\limits_{\Omega}G(u)dx\\
&\geq \frac{1}{2}\int\limits_{\mathbb{R}^{N}\times\mathbb{R}^{N}}\frac{|u(x)-u(y)|^{2}}{|x-y|^{N+2s}}\,dxdy
-C_{1}\,\sup\limits_{u\in \mX}|g||\Omega|^{\frac{1}{2}}\,\|u\|_{\mX}\\
&= \frac{1}{2}\|u\|_{\mX}^{2}
-C_{1}\,\sup\limits_{u\in \mX}|g||\Omega|^{\frac{1}{2}}\,\|u\|_{\mX}\\
&\geq \frac{1}{2}\|u\|_{\mX}^{2}-\frac{1}{2}\Big(C_{1}\sup\limits_{u\in \mX}|g||\Omega|^{\frac{1}{2}}\Big)^{2}
-\frac{1}{2}\|u\|_{\mX}^{2}\\
&=-\frac{1}{2}\Big(C_{1}\sup\limits_{u\in \mX}|g||\Omega|^{\frac{1}{2}}\Big)^{2},
\endaligned
\end{equation*}
which implies that the functional $\mathcal{E}$  is coercive and
 bounded from below on  $\mX$.

 Next, we prove the functional $\mathcal{E}$ satisfies the (PS)-condition. Let $\{u_{n}\}_{n\in \mathbb{N}}\subset \mX$
be a (PS)-sequence for $\mathcal{E}$ verifying
 $$\mathcal{E}(u_{n})\to c\,\, \text{and}\,\,\,
  \mathcal{E}'(u_{n})\to 0\,\,\text{as}\,\,n\to \infty.$$
Then, since $\mathcal{E}$ is coercive, there exists $M>0$ such that $\|u_{n}\|_{\mX}\leq M$.
 So, up to a subsequence, we may assume that there exists $u\in \mX$ such that
 \begin{equation}\label{eq2-3-1}
 \int\limits_{\mathbb{R}^{N}\times\mathbb{R}^{N}}\frac{(u_{n}(x)-u_{n}(y))(\varphi(x)-\varphi(y))}{|x-y|^{N+2s}}dxdy
 \to  \int\limits_{\mathbb{R}^{N}\times\mathbb{R}^{N}}\frac{(u(x)-u(y))(\varphi(x)-\varphi(y))}{|x-y|^{N+2s}}dxdy
 \end{equation}
 as $n\to\infty$ for any $\varphi\in \mX$.
 Moreover, by Lemma \ref{l2-2}, up to a subsequence, as $n\to\infty $ we have
 \begin{equation}\label{eq2-29}
 \begin{array}{lll}
 &u_{n}\to u\,\,\, \text{strongly in}\,\,\, L^{q}(\Omega),\,2\leq q<2^*_{s};\\
 &u_{n}(x)\to u(x)\,\,\, \text{a.e. in}\,\,\, \Omega.\\
 \end{array}
 \end{equation}
So by \eqref{eq2-29}, the continuity of $g$  and the Dominated Convergence
Theorem we obtain
\begin{equation}\label{eq2-3-2}
\int\limits_{\Omega}g(u_{n})\,u_{n}\,dx\to
\int\limits_{\Omega}g(u)\,u\,dx\quad \text{as}\,\,n\to \infty,
\end{equation}
and
 \begin{equation}\label{eq2-3-3}
\int\limits_{\Omega}g(u_{n})\,\hat{u}\,dx\to
\int\limits_{\Omega}g(u)\,u\,dx\quad \text{as}\,\,n\to \infty.
\end{equation}
Moreover, by $\mathcal{E}'(u_{n})\to 0$, \eqref{eq2-29} and \eqref{eq2-3-2},
we have
\begin{equation}\label{eq2-3-4}
 \int\limits_{\mathbb{R}^{N}\times\mathbb{R}^{N}}\frac{|u_{n}(x)-u_{n}(y)|^{2}}{|x-y|^{N+2s}}dxdy+\mu \int\limits_{\Omega}|u|^{2}\,dx-
\int\limits_{\Omega}g(u)\,u\,dx=o(1).
\end{equation}
On the other hand, by $\langle\mathcal{E}'(u_{n})\,,\,\hat{u}\rangle=o(1)$,
\eqref{eq2-29} and \eqref{eq2-3-3}, we deduce that
\begin{equation}\label{eq2-3-5}
 \int\limits_{\mathbb{R}^{N}\times\mathbb{R}^{N}}\frac{(u_{n}(x)-u_{n}(y))(u(x)-u(y))}{|x-y|^{N+2s}}dxdy+\mu \int\limits_{\Omega}|u|^{2}\,dx-
\int\limits_{\Omega}g(u)\,u\,dx=o(1).
\end{equation}
Thus, \eqref{eq2-3-1}, \eqref{eq2-3-4} and \eqref{eq2-3-5} give that
$$\int\limits_{\mathbb{R}^{N}\times\mathbb{R}^{N}}\frac{|u_{n}(x)-u_{n}(y)|^{2}}{|x-y|^{N+2s}}dxdy\to
\int\limits_{\mathbb{R}^{N}\times\mathbb{R}^{N}}\frac{|u(x)-u(y)|^{2}}{|x-y|^{N+2s}}dxdy,$$
that is,
\begin{equation}\label{eq2-3-6}
\|u_{n}\|_{\mX}\to \|u\|_{\mX}\quad \text{ as} \quad n\to \infty.
\end{equation}

Combining \eqref{eq2-3-1},\eqref{eq2-3-6}
 and letting $n\to\infty$, we conclude $u_{n}\to u$ strongly in $\mX$, which implies that the functional
 $\mathcal{E}$ satisfies the (PS)-condition
  and $u$ is a solutions of Eq. \eqref{eq2-1}.
 \end{proof}

 From  Lemma \ref{l2-1}, we can consider the minimum problem
$$c:=\inf\limits_{u\in\mX}\mathcal{E}(u).$$
We will find a global minimizer $u$ of functional $\mathcal{E}$ which is a solution of problem \eqref{eq2-1}.

Now we  introduce the set
$$E_{\eta}=\{u\in \mX:\,\,\|u\|_{L^{\infty}(\Omega)}\leq \eta\},$$
where the number $\eta>0$ from $(g_2)$. Then we have the following result.

\begin{theorem}\label{th2-1}
Assume that $(g_1)$ and $(g_2)$ hold. Then
\begin{itemize}
\item[{\em (i)}] the functional $\mathcal{E}$ infimum is attained at some $u_{0}\in E_{\eta}$;
\item[{\em (ii)}] $u_{0}$ is a weak solution of problem \eqref{eq2-1} and $u_{0}\in [0\,,\,\delta]$,
where $\delta$ is given in $(g_2)$.
\end{itemize}
\end{theorem}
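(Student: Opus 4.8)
The plan is to exploit Lemma \ref{l2-1}: since $\mathcal{E}$ is coercive, bounded below and satisfies (PS), the infimum $c=\inf_{u\in\mX}\mathcal{E}(u)$ is attained at some $u_0\in\mX$, which is automatically a weak solution of \eqref{eq2-1}; also $u_0\geq 0$ because $g$ was extended to vanish on $(-\infty,0]$ and testing the equation against $u_0^-$ together with inequality \eqref{eq1-8} forces $\|u_0^-\|_{\mX}=0$ (exactly the computation already carried out in the introduction for $\mathcal{J}$). So the real content is the two-sided $L^\infty$-bound $0\le u_0\le\delta$, i.e. that the global minimizer lands in $E_\eta$ and in fact below $\delta$. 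Everything hinges on the sign condition $(g_2)$: $g\le 0$ on $[\delta,\eta]$.

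First I would prove $u_0\le\eta$ by a truncation argument. Set $v=\min\{u_0,\eta\}\in\mX$ (truncation at a positive constant preserves membership in $\mX$ and in $L^\infty$). The idea is to compare $\mathcal{E}(v)$ with $\mathcal{E}(u_0)$ and show $\mathcal{E}(v)\le\mathcal{E}(u_0)$, with strict inequality unless $u_0\le\eta$ a.e.; minimality then forces $u_0=v$, i.e. $\|u_0\|_{L^\infty(\Omega)}\le\eta$. For the Dirichlet (Gagliardo) term one uses the pointwise inequality
\begin{equation*}
|v(x)-v(y)|^2\le (u_0(x)-u_0(y))(v(x)-v(y))\quad\text{for a.e. }x,y,
\end{equation*}
which is checked by the same case analysis as \eqref{eq1-8} (the map $t\mapsto\min\{t,\eta\}$ is $1$-Lipschitz and monotone). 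Integrating against the kernel $|x-y|^{-N-2s}$ and using $v=u_0-(u_0-\eta)^+$ one gets $\|v\|_{\mX}^2\le\|u_0\|_{\mX}^2-\|(u_0-\eta)^+\|_{\mX}^2$, hence the quadratic part does not increase. The mass term $\tfrac{\mu}{2}\int_\Omega|v|^2$ clearly does not increase since $|v|\le|u_0|$. For the remaining term we must show $-\int_\Omega G(v)\le-\int_\Omega G(u_0)$, i.e. $\int_\Omega(G(u_0)-G(v))\le 0$; but on the set $\{u_0>\eta\}$ we have $G(u_0)-G(v)=\int_\eta^{u_0}g(t)\,dt$, and since $g\le 0$ on $[\delta,\eta]$ — wait, here one actually needs $g\le 0$ on all of $[\delta,\infty)$, which is not assumed; instead I would argue directly at the threshold $\delta$, not $\eta$, and handle the range $(\delta,\eta)$ with $(g_2)$, deducing $u_0\le\delta$ in one stroke.

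Concretely, I would run the same comparison with $v=\min\{u_0,\delta\}$. The Dirichlet and mass terms are handled verbatim as above (monotone $1$-Lipschitz truncation at the constant $\delta$). For the potential term, on $\{u_0>\delta\}$ the difference is $G(u_0)-G(v)=\int_\delta^{u_0}g(t)\,dt$; the points of $\{u_0>\delta\}$ where $u_0\le\eta$ contribute a nonpositive amount by $(g_2)$, but the set $\{u_0>\eta\}$ is problematic since $g$ need not be signed there — so the clean route is first to establish $u_0\le\eta$ using that $\mathcal{E}$ restricted to $E_\eta$ already attains its min at $u_0$ (part (i)), or equivalently to observe that the whole argument only ever needs $(g_2)$ once we know, a priori, that any global minimizer satisfies $\|u_0\|_{L^\infty}\le\eta$; this last fact follows because truncating at $\eta$ strictly decreases $\mathcal{E}$ on $\{u_0>\eta\}$ unless that set is null — here one uses that $g$ is \emph{bounded}, so $\int_\eta^{u_0}g\le (\sup|g|)(u_0-\eta)$, which is dominated by the strict decrease $\tfrac\mu2\int(|u_0|^2-|v|^2)$ together with the Dirichlet gain, after possibly first rescaling; more robustly, one uses the standard Stampacchia iteration on $(u_0-L)^+$ for $L\ge\eta$ to get $\|u_0\|_\infty<\infty$ and then the truncation at $\eta$. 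Having $u_0\le\eta$, repeat the truncation at $\delta$: now on $\{u_0>\delta\}$ we have $\delta<u_0\le\eta$, so $\int_\delta^{u_0}g\le 0$ by $(g_2)$, hence $\mathcal{E}(\min\{u_0,\delta\})\le\mathcal{E}(u_0)$; minimality gives $u_0=\min\{u_0,\delta\}$, i.e. $\|u_0\|_{L^\infty(\Omega)}\le\delta$. Combined with $u_0\ge 0$ this yields $u_0\in[0,\delta]\subset E_\eta$ and proves both (i) and (ii).

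The main obstacle is the potential-energy comparison when the minimizer might a priori exceed $\eta$: without monotonicity or a global sign of $g$, one cannot directly conclude that truncating at $\delta$ lowers $\mathcal{E}$. The fix is to peel it off in two stages — a preliminary $L^\infty$-bound $u_0\le\eta$ (via Stampacchia iteration, using $(g_1)$: $g$ bounded, so the subcritical nonlinearity is in fact bounded and the truncation levels $L\ge\eta$ give the needed decay of $|\{u_0>L\}|$), followed by the $\delta$-truncation which uses $(g_2)$ in its natural range $[\delta,\eta]$. I expect the Stampacchia step to be the most technical, but it is entirely standard for $(-\Delta)^s$ with bounded right-hand side; all other estimates reduce to the pointwise truncation inequality analogous to \eqref{eq1-8} and the elementary bound \eqref{eq2-2*}.
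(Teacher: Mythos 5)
There is a genuine gap, and it sits exactly where you flagged your own discomfort: the claim that the \emph{global} minimizer of $\mathcal{E}$ over all of $\mX$ satisfies $\|u_0\|_{L^\infty(\Omega)}\le\eta$. This is false in general under $(g_1)$--$(g_2)$: those hypotheses say nothing about the sign or size of $g$ on $(\eta,\infty)$, so $G$ may have a much deeper well at some level $t^*>\eta$ (take $g$ large and positive on $[\eta,t^*]$), and for $\mu$ small the global minimizer will sit near $t^*$ rather than in $[0,\delta]$. Neither of your proposed repairs closes this. The Stampacchia iteration yields an $L^\infty$ bound in terms of $\sup|g|$, $\mu$, $s$, $N$, $|\Omega|$ --- a quantity with no relation to the particular number $\eta$ appearing in $(g_2)$. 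And the domination argument $\int_\eta^{u_0}g\le(\sup|g|)(u_0-\eta)\le\frac{\mu}{2}(u_0^2-\eta^2)$ requires $\frac{\mu}{2}(u_0+\eta)\ge\sup|g|$, which fails for $u_0$ close to $\eta$ unless one imposes $\mu\eta\ge\sup|g|$, an assumption not available. Your remark that one could instead ``use that $\mathcal{E}$ restricted to $E_\eta$ already attains its min at $u_0$ (part (i))'' is the right idea but is left circular as written.

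The paper's route avoids the issue entirely by never leaving $E_\eta$: it minimizes $\mathcal{E}$ over the convex, closed (hence weakly closed) set $E_\eta$, where coercivity plus sequential weak lower semicontinuity give a constrained minimizer $u_0$. It then compares $u_0$ with $\gamma\circ u_0$ where $\gamma(t)=\min\{t^+,\delta\}$ --- a single truncation that handles both the negative part and the excess over $\delta$, and which manifestly stays in $E_\eta$ --- and uses $(g_2)$ only on the range $[\delta,\eta]$, which is all that is needed because $u_0\in E_\eta$ by construction. Equality of energies then forces $|\{u_0<0\}|=|\{u_0>\delta\}|=0$. Finally, since $\|u_0\|_{L^\infty}\le\delta<\eta$, the point $u_0$ is interior to $E_\eta$ along any direction $v\in\mX\cap L^\infty(\Omega)$, so $\langle\mathcal{E}'(u_0),v\rangle=0$ and $u_0$ is an unconstrained critical point, i.e.\ a weak solution. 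Your pointwise truncation inequalities for the Gagliardo seminorm and the treatment of the mass term are correct and would slot into this constrained scheme; what must change is the starting point --- minimize over $E_\eta$, not over $\mX$ --- after which the two-stage ($\eta$ then $\delta$) peeling you describe becomes unnecessary.
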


\begin{proof}
(i) Now, we claim that $\mathcal{E}$ is sequentially
weak lower semi-continuous. The claim follows at once if we prove that $u\mapsto \int_{\Omega}G(u)dx$,
$\forall \,u\in \mX$, is sequentially weakly continuous. Arguing by contradiction, we assume that
there exist $\varepsilon_{0}>0$ and a sequence
$\{u_{n}\}_{n\in \mathbb{N}}\subset\mX$  which converges weakly to $u\in \mX$,
up to a subsequence, such that
\begin{equation*}\label{eq2-2**}
\Big|\int\limits_{\Omega}\Big(G(u_{n})-G(u)\Big)dx\Big|\geq\varepsilon_{0}>0 \quad \text{for all}\,\,n\in \mathbb{N}.
\end{equation*}
On the other hand, using the mean value theorem,
H\"{o}lder inequality and Sobolev embedding theorem,
we deduce that
$$ |\int\limits_{\Omega}\Big(G(u_{n})-G(u)\Big)dx|
\leq \sup\limits_{u\in \mX}|g|\cdot|\Omega|^{\frac{1}{2}}\Big(\int\limits_{\Omega}|u_{n}-u|^{2}\Big)^{\frac{1}{2}}\to 0$$
as $n\to \infty$, which contradicts $\varepsilon_{0}>0$.

Note that the set $E_{\eta}$ is convex and closed in $\mX$,
and thus weakly closed in $\mX$.
Then there exists $u_{0}\in E_{\eta}$ which is a minimum point
of $\mathcal{E}$ over $E_{\eta}$ since $\mathcal{E}$ is coercive on $E_{\eta}$ . This proves (i).

(ii)
Firstly, we prove $u_{0}\in [\,0\,,\,\delta\,]$.
Arguing by  contradiction, we assume the set
 $$\widetilde{\Omega}:=\{x\in \Omega:\,\,u_{0}<0\,\,\,\text{or}\,\,\,u_{0}>\delta\}$$ is not empty,
and suppose that $|\widetilde{\Omega}|>0$, where $|\widetilde{\Omega}|$ denotes the Lebesgue measure.

Define $\gamma:\,\mathbb{R}\to \mathbb{R}$ by
$$\gamma(t)=\min\{\,t^{+}\,,\,\delta\,\},$$
where $t^{+}=\max\{\,t\,,\,0\,\}$. Then
$\gamma$ is a Lipschitz function, and $\gamma(t)\geq 0$ and $\gamma(0)=0$.

Now, set $u=\gamma\circ u_{0}$, we have $u\in \mX$.
Moreover, for a.e. $x\in \widetilde{\Omega}$,
 $$u(x)=\gamma(u_{0}(x))=\min\{u_{0}^{+}(x)\,,\,\delta\}\leq \delta,$$
and $0\leq u(x)\leq \delta<\eta$.
So $u\in E_{\eta}$.

Define the sets
$$\Omega_{1}:=\{x\in \Omega:\,\,u_{0}(x)<0\}\quad \text{and}
\quad\Omega_{2}:=\{x\in \Omega:\,\,u_{0}(x)>\delta\}.$$
Thus $\widetilde{\Omega}=\Omega_{1}\cup\Omega_{2}$ and
\begin{equation*}\label{eq2-4}
u(x)=\left\{
\begin{array}{lll}
&0,\quad &\text{if}\quad x\in \Omega_{1};\\
&\delta,\quad &\text{if}\quad x\in\Omega_{2};\\
&u_{0}(x),\quad &\text{if}\quad  x\in \Omega\setminus\widetilde{\Omega}.\\
\end{array}
\right.
\end{equation*}

Moreover, we have
\begin{equation}\label{eq2-5}
\aligned
&\mathcal{E}(u)-\mathcal{E}(u_{0})\\
&=\frac{1}{2}\int\limits_{\mathbb{R}^{N}\times\mathbb{R}^{N}}\frac{|u(x)-u(y)|^{2}-|u_{0}(x)-u_{0}(y)|^{2}}{|x-y|^{N+2s}}dxdy
+\frac{\mu}{2}\int\limits_{\Omega}(|u|^{2}-|u_{0}|^{2})dx\\
&-\int\limits_{\Omega}(G(u)-G(u_{0}))dx\\
&=\frac{1}{2}\int\limits_{\widetilde{\Omega}\times\widetilde{\Omega}}\frac{|u(x)-u(y)|^{2}-|u_{0}(x)-u_{0}(y)|^{2}}{|x-y|^{N+2s}}dxdy
+\frac{\mu}{2}\int\limits_{\Omega\setminus\widetilde{\Omega}}(|u|^{2}-|u_{0}|^{2})dx\\
&+\frac{\mu}{2}\int\limits_{\widetilde{\Omega}}(|u|^{2}-|u_{0}|^{2})dx
-\int\limits_{\Omega\setminus\widetilde{\Omega}}(G(u)-G(u_{0}))dx
-\int\limits_{\widetilde{\Omega}}(G(u)-G(u_{0}))dx\\
&=-\frac{1}{2}\int\limits_{\widetilde{\Omega}\times\widetilde{\Omega}}\frac{|u_{0}(x)-u_{0}(y)|^{2}}{|x-y|^{N+2s}}dxdy
+\int\limits_{\Omega_{2}}\int\limits_{\mathbb{R}^{N}\setminus\widetilde{\Omega}}\frac{|\delta-u_{0}(y)|^{2}-|u_{0}(x)-u_{0}(y)|^{2}}{|x-y|^{N+2s}}dxdy\\
&+\frac{\mu}{2}\int\limits_{\widetilde{\Omega}}(|u|^{2}-|u_{0}|^{2})dx
-\int\limits_{\widetilde{\Omega}}(G(u)-G(u_{0}))dx\\
&=-\frac{1}{2}\int\limits_{\widetilde{\Omega}\times\widetilde{\Omega}}\frac{|u_{0}(x)-u_{0}(y)|^{2}}{|x-y|^{N+2s}}dxdy
+\int\limits_{\Omega_{2}}\int\limits_{\mathbb{R}^{N}\setminus\widetilde{\Omega}}\frac{|\delta|^{2}-|u_{0}(x)|^{2}}{|x-y|^{N+2s}}dxdy\\
&+\frac{\mu}{2}\int\limits_{\widetilde{\Omega}}(|u|^{2}-|u_{0}|^{2})dx
-\int\limits_{\widetilde{\Omega}}(G(u)-G(u_{0}))dx\\
\endaligned
\end{equation}
and
\begin{equation}\label{eq2-6}
\aligned
\int\limits_{\widetilde{\Omega}}(|u|^{2}-|u_{0}|^{2})dx
&=\int\limits_{\Omega_{1}}(|u|^{2}-|u_{0}|^{2})dx
+\int\limits_{\Omega_{2}}(|u|^{2}-|u_{0}|^{2})dx\\
&=-\int\limits_{\Omega_{1}}|u_{0}|^{2}dx
+\int\limits_{\Omega_{2}}(\delta^{2}-|u_{0}|^{2})dx\\
&\leq 0.\\
\endaligned
\end{equation}

On the other hand, from $g(t)=0$ for all $t\leq 0$,
 then we have
\begin{equation}\label{eq2-7}
\aligned
\int\limits_{\Omega_{1}}(G(u)-G(u_{0}))dx
=\int\limits_{\Omega_{1}}(G(0)-0)dx=0\,\,\,\text{for a.e.}\,\,\,x\in \Omega_{1}.
\endaligned
\end{equation}
Moreover, for a.e. $x\in \Omega_{2}$ we have $u(x)=\delta$.
By the mean value theorem and assumption $(g_2)$,
there exists $\theta\in [\delta\,,\,u_{0}]\subset [\delta\,,\,\eta]$
such that
\begin{equation*}
G(u)-G(u_{0})=G(\delta)-G(u_{0})=g(\theta)(\delta-u_{0})\geq 0.
\end{equation*}
This implies that
\begin{equation}\label{eq2-8}
\int\limits_{\Omega_{2}}(G(u)-G(u_{0}))dx
=\int\limits_{\Omega_{2}}g(\theta)(\delta-u_{0})dx\geq 0.
\end{equation}
Then from \eqref{eq2-7} and \eqref{eq2-8}, we get
\begin{equation}\label{eq2-9}
\int\limits_{\widetilde{\Omega}}\Big(G(u)-G(u_{0})\Big)dx\geq 0.
\end{equation}
Therefore, by \eqref{eq2-5}, \eqref{eq2-6} and \eqref{eq2-9}, we obtain
\begin{equation}\label{eq2-10}
\mathcal{E}(u)-\mathcal{E}(u_{0})\leq 0\quad \text{for all }\,\,u\in E_{\eta}.
\end{equation}

On the other hand, since $u_{0}$ is the minimum point of $\mathcal{E}$ over $E_{\eta}$,
 we get
 \begin{equation}\label{eq2-11}
 \mathcal{E}(u)-\mathcal{E}(u_{0})\geq 0\quad \text{for all }\,\,u\in E_{\eta}.
 \end{equation}
Combining \eqref{eq2-10} with \eqref{eq2-11},
we obtain $\mathcal{E}(u)-\mathcal{E}(u_{0})= 0$ for all $u\in E_{\eta}$, which implies
\begin{equation}\label{eq2-11*}
\int\limits_{\widetilde{\Omega}\times\widetilde{\Omega}}\frac{|u_{0}(x)-u_{0}(y)|^{2}}{|x-y|^{N+2s}}dxdy
+2\int\limits_{\widetilde{\Omega}}(G(u)-G(u_{0}))dx
=\mu\int\limits_{\widetilde{\Omega}}(|u|^{2}-|u_{0}|^{2})dx
\end{equation}
In the equality \eqref{eq2-11*},
since the left hand side is non-negative and the right hand side is non-positive,
 every term in \eqref{eq2-11*} should be zero, i.e.,
$$\int\limits_{\widetilde{\Omega}}(|u|^{2}-|u_{0}|^{2})dx=0.$$
So from \eqref{eq2-6} we get
\begin{equation*}\label{eq2-12}
\int\limits_{\Omega_{1}}|u_{0}|^{2}dx
=\int\limits_{\Omega_{2}}(\delta^{2}-|u_{0}|^{2})dx= 0.
\end{equation*}
This implies that $|\Omega_{1}|=0$ and
$|\Omega_{2}|=0$,
 which is a contradiction.
We obtain $u_{0}\in [\,0\,,\,\delta\,]$ for a.e. $x\in \Omega$.

Next, we prove the $u_{0}$ is a weak solution of Eq. \eqref{eq2-1}.
Define the function $J:\,\mathbb{R}\to \mathbb{R}$ by
\begin{equation*}\label{eq2-13}
J(t)=\mathcal{E}(u_{0}+t\,v)\quad \forall v\in \mX.
\end{equation*}
For every $v\in \mX$ we can choose suitable $t$ such that
\begin{equation*}\label{eq2-14}
\aligned
\|u_{0}+t\,v\|_{L^{\infty}(\Omega)}
&\leq \|u_{0}\|_{L^{\infty}(\Omega)}+|t|\cdot\|v\|_{L^{\infty}(\Omega)}\\
&< \|u_{0}\|_{L^{\infty}(\Omega)}+|t|\cdot(\|v\|_{L^{\infty}(\Omega)}+1)\\
&\leq \delta+|t|\cdot(\|v\|_{L^{\infty}(\Omega)}+1)\\
&\leq \eta,
\endaligned
\end{equation*}
which implies that for each
$t\in (-\frac{\eta-\delta}{\|v\|_{L^{\infty}(\Omega)}+1}\,,
\,\frac{\eta-\delta}{\|v\|_{L^{\infty}(\Omega)}+1})$
 we have $u_{0}+t\,v\in E_{\eta}$.
Then $$J(t)=\mathcal{E}(u_{0}+tv)\geq \mathcal{E}(u_{0})=J(0)$$ for all
$t\in (-\frac{\eta-\delta}{\|v\|_{L^{\infty}(\Omega)}+1}\,,
\,\frac{\eta-\delta}{\|v\|_{L^{\infty}(\Omega)}+1})$.
Since $J(t)$ is differentiable at $t=0$ and $J'(0)=0$,
it follows that $\langle\mathcal{E}'(u_{0})\,,\,v\rangle=0$,
and  $u_{0}$ is a weak solution of Eq. \eqref{eq2-1}.
This completes the proof.
\end{proof}

\begin{lemma}\label{l2-3}
Fix $R>0$. For any $\zeta\in (0\,,\,\eta)$, let
\begin{equation}\label{eq2-23}
z_{\zeta}=z(x;\,\zeta,\,R)=\left\{
\begin{array}{lll}
&0,\quad&\text{if}\,\,\, x\in\Omega\setminus B_{2R}, \\
&\zeta,\quad &\text{if}\,\,\, x\in B_{R}, \\
&\frac{\zeta}{R}\,(\,2\,R-|x|\,),\quad &\text{if}\,\,\, x\in B_{2R}\setminus B_{R}, \\
\end{array}\right.
\end{equation}
where $B_{R}\subset \Omega$ be the $N$-dimensional ball with radius $R>0$
and center $0\in \Omega$.
Then $z_{\zeta}\in E_{\eta}$, and there exists $C(N,R,s)>0$ such that
\begin{equation}\label{eq2-23*}
\|z_{\zeta}\|_{\mX}^{2}\leq C(N,R,s)\,\zeta^{2}.
\end{equation}
\end{lemma}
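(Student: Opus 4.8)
The plan is to reduce everything to a single fixed Lipschitz ``tent'' function and then estimate its Gagliardo seminorm by splitting the double integral into a neighbourhood of the diagonal and its complement.

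First I would check that $z_\zeta\in E_\eta$. Directly from \eqref{eq2-23} one has $0\le z_\zeta(x)\le\zeta$ for every $x$, whence $\|z_\zeta\|_{L^\infty(\Omega)}\le\zeta<\eta$; moreover $z_\zeta$ vanishes outside $B_{2R}$, and since $B_{2R}\subset\Omega$ this gives $z_\zeta=0$ a.e.\ in $\mathbb{R}^N\setminus\Omega$, while $z_\zeta\in L^2(\mathbb{R}^N)$ because it is bounded with compact support. Hence, once the seminorm bound \eqref{eq2-23*} is established, $z_\zeta\in H^s(\mathbb{R}^N)$ and therefore $z_\zeta\in\mX$, so that $z_\zeta\in E_\eta$.

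For \eqref{eq2-23*} I would write $z_\zeta=\zeta\,w$, where $w=w(\cdot\,;R)$ is the height-one tent function obtained from \eqref{eq2-23} with $\zeta$ replaced by $1$; then $\|z_\zeta\|_{\mX}^2=\zeta^2\|w\|_{\mX}^2$, so it suffices to bound $\|w\|_{\mX}^2$ by a constant $C(N,R,s)$. Note that $w$ is Lipschitz with constant $1/R$, that $0\le w\le1$, and that $w(x)=w(y)=0$ as soon as $x,y\notin\overline{B_{2R}}$; hence the integrand defining $\|w\|_{\mX}^2$ is supported in $(\overline{B_{2R}}\times\mathbb{R}^N)\cup(\mathbb{R}^N\times\overline{B_{2R}})$, and by the symmetry of the integrand
\[
\|w\|_{\mX}^2\;\le\;2\int_{\overline{B_{2R}}}\Big(\int_{\mathbb{R}^N}\frac{|w(x)-w(y)|^2}{|x-y|^{N+2s}}\,dy\Big)\,dx .
\]
For fixed $x$ I would split the inner integral at $|x-y|=1$: on $\{|x-y|\le1\}$ the Lipschitz bound $|w(x)-w(y)|\le R^{-1}|x-y|$ gives, in polar coordinates, the estimate $R^{-2}|\mathbb{S}^{N-1}|\int_0^1 r^{1-2s}\,dr$, which is finite since $s<1$; on $\{|x-y|>1\}$ the bound $|w(x)-w(y)|\le1$ gives $|\mathbb{S}^{N-1}|\int_1^\infty r^{-1-2s}\,dr$, finite since $s>0$. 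Integrating the resulting $x$-independent constant over $\overline{B_{2R}}$ produces a factor $|B_{2R}|=\kappa_N R^N$, so that
\[
\|w\|_{\mX}^2\;\le\;2\kappa_N|\mathbb{S}^{N-1}|\Big(\frac{R^{N-2}}{2-2s}+\frac{R^{N}}{2s}\Big)\;=:\;C(N,R,s),
\]
and multiplying through by $\zeta^2$ yields \eqref{eq2-23*}.

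I do not expect a genuine obstacle here. The two points that need a little care are that the integrand vanishes whenever both variables lie outside $\overline{B_{2R}}$ — this is exactly what reduces the double integral to an integral over a bounded set times $\mathbb{R}^N$ — and that the diagonal singularity is integrable precisely because $s\in(0,1)$: the Lipschitz estimate handles $|x-y|\le1$ (finiteness of $\int_0^1 r^{1-2s}\,dr$ needs $s<1$) while the crude $L^\infty$ estimate handles the tail (finiteness of $\int_1^\infty r^{-1-2s}\,dr$ needs $s>0$). An equivalent shortcut is to observe that $w$ is Lipschitz with compact support, hence $w\in H^1(\mathbb{R}^N)\hookrightarrow H^s(\mathbb{R}^N)$ for $0<s<1$, which immediately gives $\|w\|_{\mX}<\infty$ with the same scaling in $\zeta$.
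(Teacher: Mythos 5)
Your proof is correct, and it takes a genuinely different (and more streamlined) route than the paper's. You first factor out the height by homogeneity, $z_\zeta=\zeta w$ with $w$ the unit tent, so that the whole estimate reduces to showing the single fixed function $w$ has finite Gagliardo seminorm; you then split the double integral by the distance $|x-y|$ (near/far from the diagonal), using the Lipschitz bound $|w(x)-w(y)|\le R^{-1}|x-y|$ where $|x-y|\le 1$ and the trivial bound $|w(x)-w(y)|\le 1$ where $|x-y|>1$, together with the observation that the integrand vanishes unless one of the variables lies in $\overline{B_{2R}}$. The paper instead keeps $\zeta$ throughout and decomposes $\mathbb{R}^N\times\mathbb{R}^N$ geometrically into products of the regions $B_R$, $B_{2R}\setminus B_R$ and the exterior, estimating each of the resulting terms $I$, $II$, $III$ separately; for the term $I$ it introduces an auxiliary exponent $\alpha$ (with a case distinction on $s\lessgtr\frac12$) and invokes Lemma~13 of \cite{cv2011} to control $\int_{B_R\times(\mathbb{R}^N\setminus B_R)}|x-y|^{-(N+\alpha)}\,dx\,dy$. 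Your diagonal/off-diagonal split makes that external lemma and the case analysis on $s$ unnecessary: the exponents $\int_0^1 r^{1-2s}\,dr$ and $\int_1^\infty r^{-1-2s}\,dr$ are integrable for all $s\in(0,1)$ at once. Both arguments yield a constant depending only on $N$, $R$, $s$, which is all the lemma requires (the precise $R$-dependence is immaterial here since $R$ is fixed). Your verification that $z_\zeta\in E_\eta$ (namely $0\le z_\zeta\le\zeta<\eta$ and $z_\zeta$ bounded, Lipschitz, supported in $B_{2R}\subset\Omega$, hence in $\mX$ once the seminorm is finite) matches what the paper uses implicitly, and your closing remark that $w\in H^1\hookrightarrow H^s$ is a legitimate one-line alternative.
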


\begin{proof}
The proof follows the lines of \cite{dpv2013}, so we will be brief in details.
Notice that $z_{\zeta}$ is uniformly Lipschitz continuous and vanishes outside $B_{2R}$.
 In particular, we have $z_{\zeta}\in X_{0}^{s}(B_{2R})$ and
\begin{equation}
\begin{aligned}
\label{eq2-30}
&\|z_{\zeta}\|_{\mX}
=\int\limits_{\mathbb{R}^{N}\times\mathbb{R}^{N}}\frac{|z_{\zeta}(x)-z_{\zeta}(y)|^{2}}{|x-y|^{N+2s}}dxdy\\
&=\int\limits_{B_{2R}\times B_{2R}}\frac{|z_{\zeta}(x)-z_{\zeta}(y)|^{2}}{|x-y|^{N+2s}}dxdy+
2\int\limits_{B_{2R}\times (\Omega\setminus B_{2R})}\frac{|z_{\zeta}(x)-z_{\zeta}(y)|^{2}}{|x-y|^{N+2s}}dxdy\\
&+\int\limits_{(\Omega\setminus B_{2R})\times (\Omega\setminus B_{2R})}\frac{|z_{\zeta}(x)-z_{\zeta}(y)|^{2}}{|x-y|^{N+2s}}dxdy\\
&=\int\limits_{B_{R}\times B_{R}}\frac{|z_{\zeta}(x)-z_{\zeta}(y)|^{2}}{|x-y|^{N+2s}}dxdy+
2\int\limits_{B_{R}\times (B_{2R}\setminus B_{R})}\frac{|z_{\zeta}(x)-z_{\zeta}(y)|^{2}}{|x-y|^{N+2s}}dxdy\\
&+\int\limits_{(B_{2R}\setminus B_{R})\times (B_{2R}\setminus B_{R})}\frac{|z_{\zeta}(x)-z_{\zeta}(y)|^{2}}{|x-y|^{N+2s}}dxdy
+2\int\limits_{B_{2R}\times (\Omega\setminus B_{2R})}\frac{|z_{\zeta}(x)-z_{\zeta}(y)|^{2}}{|x-y|^{N+2s}}dxdy\\
&= \int\limits_{B_{R}\times B_{R}}\frac{|\zeta-\zeta|^{2}}{|x-y|^{N+2s}}dxdy
+2\int\limits_{B_{R}}\Big(\int\limits_{B_{2R}\setminus B_{R}}\frac{|\frac{\zeta}{R}(2R-|x|)-\zeta|^{2}}{|x-y|^{N+2s}}dx\Big)dy\\
&+\int\limits_{(B_{2R}\setminus B_{R})\times (B_{2R}\setminus B_{R})}\frac{|\frac{\zeta}{R}(2R-|x|)-\frac{\zeta}{R}(2R-|y|)|^{2}}{|x-y|^{N+2s}}dxdy\\
&+2\int\limits_{B_{2R}\times (\Omega\setminus B_{2R})}\frac{|z_{\zeta}(x)-z_{\zeta}(y)|^{2}}{|x-y|^{N+2s}}dxdy\\
&=2(\frac{\zeta}{R})^{2}\int\limits_{B_{R}}\int\limits_{B_{2R}\setminus B_{R}}\frac{|R-|x||^{2}}{|x-y|^{N+2s}}dxdy
+(\frac{\zeta}{R})^{2}\int\limits_{(B_{2R}\setminus B_{R})\times (B_{2R}\setminus B_{R})}\frac{||x|-|y||^{2}}{|x-y|^{N+2s}}dxdy\\
&+2\int\limits_{B_{2R}\times (\Omega\setminus B_{2R})}\frac{|z_{\zeta}(x)-z_{\zeta}(y)|^{2}}{|x-y|^{N+2s}}dxdy\\
&=:\,2(\frac{\zeta}{R})^{2}\,I+(\frac{\zeta}{R})^{2} \,II+2\,III.
\end{aligned}
\end{equation}

Now, we estimate $I$, $II$ and $III$. Let
\begin{equation*}
\alpha=\left\{
\begin{array}{lll}
 &2s\quad &\text{if}\,\,\,s\in (0\,,\,\frac{1}{2});\\
 &\frac{1}{2}\quad &\text{if}\,\,\,s=\frac{1}{2};\\
 &2s-1\quad &\text{if}\,\,\,s\in (\frac{1}{2}\,,\,1).\\
\end{array}\right.
\end{equation*}
We remark that $\alpha\in (0\,,\,1)$ and by Lemma 13 of \cite{cv2011} we get
\begin{equation}\label{eq2-33}
\int\limits_{B_{R}\times(\mathbb{R}^{N}\setminus B_{R})}\frac{1}{|x-y|^{N+\alpha}}dxdy\leq C(N,\alpha)R^{N-\alpha}.
\end{equation}

For $I$, if $x\in B_{2R}\setminus B_{R}$ and $y\in B_{R}$, we have that
$$||x|-R|=|x|-R\leq |x|-|y|\leq |x-y|$$
and
$$||x|-R|=|x|-R\leq 2R-R=R;$$
therefore, $||x|-R|\leq \min\{|x-y|\,,\,R\}$. Hence
$$||x|-R|^{2}\leq \min\{R^{2},\,\,R|x-y|,\,\,R^{\frac{3}{2}}|x-y|^{\frac{1}{2}}\}.$$
Therefore, for any $x\in B_{2R}\setminus B_{R}$ and $y\in B_{R}$, we get
\begin{equation}\label{eq2-35}
\frac{|R-|x||^{2}}{|x-y|^{N+2s}}
\leq \left\{
\aligned
&\frac{1}{|x-y|^{N+2s}},\quad \text{if}\,\,\,s\in (0\,,\,\frac{1}{2}),\\
&\frac{1}{|x-y|^{N+(2s-\frac{1}{2})}},\quad \text{if}\,\,\,s=\frac{1}{2},\\
&\frac{1}{|x-y|^{N+(2s-1)}},\quad \text{if}\,\,\,s\in (\frac{1}{2}\,,\,1),\\
\endaligned\right.
\end{equation}
that is,
$$
\frac{|R-|x||^{2}}{|x-y|^{N+2s}}\leq\frac{1}{|x-y|^{N+\alpha}}\,\,\,\text{for any}\,\,s\in (0\,,\,1).
$$
Then
\begin{equation}\label{eq2-31}
\aligned
I:
&=\int\limits_{B_{R}}\Big(\int\limits_{B_{2R}\setminus B_{R}}\frac{|R-|x||^{2}}{|x-y|^{N+2s}}dx\Big)dy
\leq \int\limits_{B_{R}}\Big(\int\limits_{B_{2R}\setminus B_{R}}\frac{||x|-|y||^{2}}{|x-y|^{N+2s}}dx\Big)dy\\
&\leq \int\limits_{B_{R}}\Big(\int\limits_{B_{2R}\setminus B_{R}}\frac{|x-y|^{2}}{|x-y|^{N+2s}}dx\Big)dy
\leq \int\limits_{B_{R}}\Big(\int\limits_{\mathbb{R}^{N}\setminus B_{R}}\frac{1}{|x-y|^{N+\alpha}}dx\Big)dy\\
&\leq C_{1}(N,R,s).
\endaligned
\end{equation}

Now, we estimate $II$. If $x,\,y\in B_{2R}\setminus B_{R}$, we have that $|x-y|\leq 4R$.
Thus, we make the substitution $\theta:=x-y$ and let $\rho=|\theta|$ in the following computation
\begin{equation}\label{eq2-32}
\aligned
II:
&=\int\limits_{(B_{2R}\setminus B_{R})\times (B_{2R}\setminus B_{R})}\frac{||x|-|y||^{2}}{|x-y|^{N+2s}}dxdy
\leq \int\limits_{(B_{2R}\setminus B_{R})\times (B_{2R}\setminus B_{R})}\frac{|x-y|^{2}}{|x-y|^{N+2s}}dxdy\\
&\leq \int\limits_{(B_{2R}\setminus B_{R})\times (B_{2R}\setminus B_{R})}\frac{1}{|x-y|^{N+2s-2}}dxdy
\leq \int\limits_{(B_{2R}\setminus B_{R})}\Big(\int\limits_{B_{4R}}|\theta|^{2-N-2s}d\theta\Big)dy\\
&\leq C(N,R)\int\limits_{0}^{4R}\varrho^{2-N-2s}\varrho ^{N-1}d\varrho
\leq C(N,R)\frac{(4R)^{2-2s}}{2-2s}=:C_{2}(N,R,s),
\endaligned
\end{equation}
for a suitable $C_{2}(N,R,s)>0$.

For $III$, if $x\in \mathbb{R}^{N}\setminus B_{2R}$ and  $y\in B_{R}$, we get
\begin{equation}\label{eq2-34-1}
|x-y|\geq |x|-|y|\geq |x|-\frac{x}{2R}|y|=\frac{|x|}{2R}(2R-|y|)\geq \frac{|x|}{2R}(2R-R)=\frac{|x|}{2}>0.
\end{equation}
Moreover, similar as the estimate $II$, if $y\in B_{2R}\setminus B_{R}$ and $x\in \mathbb{R}^{N}\setminus B_{2R}$,
there exists $C_{3}(N,R,s)>0$ such that
\begin{equation*}
\aligned
(\frac{\zeta}{R})^{2}\int\limits_{B_{2R}\setminus B_{R}}\Big(\int\limits_{\Omega\setminus B_{2R}}\frac{|x-y|^{2}}{|x-y|^{N+2s}}dx\Big)dy
&\leq (\frac{\zeta}{R})^{2}\int\limits_{B_{2R}}\Big(\int\limits_{\mathbb{R}^{N}\setminus B_{2R}}\frac{1}{|x-y|^{N+2s-2}}dx\Big)dy\\
&\leq C_{3}(N,R,s).\\
\endaligned
\end{equation*}
Then we have
\begin{equation}\label{eq2-34}
\aligned
III:
&=\int\limits_{B_{2R}\times (\Omega\setminus B_{2R})}\frac{|z_{\zeta}(x)-z_{\zeta}(y)|^{2}}{|x-y|^{N+2s}}dxdy\\
&=\int\limits_{B_{R}}\Big(\int\limits_{ \Omega\setminus B_{2R}}\frac{|\zeta|^{2}}{|x-y|^{N+2s}}dx\Big)dy
+\int\limits_{B_{2R}\setminus B_{R}}\Big(\int\limits_{\Omega\setminus B_{2R}}\frac{|0-\frac{\zeta}{R}(2R-|y|)|^{2}}{|x-y|^{N+2s}}dx\Big)dy\\
&\leq \zeta^{2}\int\limits_{B_{R}}\Big(\int\limits_{ (\Omega\setminus B_{2R})}\frac{1}{|x-y|^{N+2s}}dx\Big)dy
+(\frac{\zeta}{R})^{2}\int\limits_{B_{2R}\setminus B_{R}}\Big(\int\limits_{\Omega\setminus B_{2R}}\frac{|x-y|^{2}}{|x-y|^{N+2s}}dx\Big)dy\\
&\leq \zeta^{2}\int\limits_{B_{R}}\Big(\int\limits_{ \mathbb{R}^{N}\setminus B_{2R}}\Big(\frac{2}{|x|}\Big)^{N+2s}dx\Big)dy
+(\frac{\zeta}{R})^{2}\int\limits_{B_{2R}}\Big(\int\limits_{\mathbb{R}^{N}\setminus B_{2R}}\frac{1}{|x-y|^{N+2s-2}}dx\Big)dy\\
&\leq \zeta^{2}\int\limits_{B_{R}}\int\limits_{2R}^{+\infty}\frac{1}{\varrho^{2s+1}}d\varrho+(\frac{\zeta}{R})^{2}C(N,R,s)\\
&\leq C_{4}(N,R,s)\zeta^{2}.
\endaligned
\end{equation}
Thus, making use of \eqref{eq2-31}, \eqref{eq2-32}, \eqref{eq2-34} and \eqref{eq2-30} we conclude that
there exists $C(N,R,s)>0$ such that
$\|z_{\zeta}\|_{\mX}^{2}\leq C(N,R,s)\,\zeta^{2}$. This completes the proof.
\end{proof}

Now, we consider the oscillatory behavior of $g$ near the origin, and assume that
\begin{itemize}
\item[($G_1$)] $g(0)=0$ and there exists $s_{0}>0$
such that $\sup\limits_{t\in [0\,,\,s_{0}]}|g(t)|<\infty$;
\item[($G_2$)]$-\infty<\liminf\limits_{t\to 0^{+}}\frac{G(t)}{t^2}
\leq\limsup\limits_{t\to 0^{+}}\frac{G(t)}{t^2}=+\infty$
uniformly for a.e. $x\in \Omega$, where $G(t)=\displaystyle\int_{0}^{t}g(\tau)d\tau$;
\item[($G_3$)] There are two sequences $\{\delta_{k}\}_{k\in \mathbb{N}}$,
  $\{\eta_{k}\}_{k\in \mathbb{N}}$ with
$0<\eta_{k+1}<\delta_{k}<\eta_{k}$ and $\lim\limits_{k\to \infty}\eta_{k}=0$ such that
$g(u)\leq 0$ for all $u\in [\delta_{k}\,,\,\eta_{k}]$.
\end{itemize}

We have the following result.
 \begin{theorem}\label{th2-2}
Assume that $(G_1)-(G_3)$ hold. Then there exist infinitely many positive solutions
 $\{u_{k}\}_{k\in \mathbb{N}}\subset \mX$
of Eq. \eqref{eq2-1} satisfying
\begin{equation}\label{eq2*}
\lim\limits_{k\to \infty}\|u_{k}\|_{\mX}
 =\lim\limits_{k\to \infty}\|u_{k}\|_{L^{\infty}(\Omega)}=0.
 \end{equation}
\end{theorem}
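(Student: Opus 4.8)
The idea is a truncation argument: from each ``well'' $[\delta_k,\eta_k]$ supplied by $(G_3)$ I will extract one solution of \eqref{eq2-1} by minimizing a suitably truncated energy, in the spirit of Theorem~\ref{th2-1}, and I will use the test function of Lemma~\ref{l2-3} together with $(G_2)$ to guarantee that this solution is nontrivial and lives close to the origin.

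\emph{Step 1: one solution per well.} Since $\eta_k\to0$, after dropping finitely many indices I may assume $\eta_k<s_0$ for all $k$. For each $k$ I truncate $g$ by setting
\[
g_k(t)=\begin{cases}0,& t\le 0,\\ g(t),& 0\le t\le\eta_k,\\ g(\eta_k),& t\ge\eta_k,\end{cases}
\]
so that, by $(G_1)$, $g_k$ is continuous, bounded and vanishes at $0$, i.e.\ $g_k$ satisfies $(g_1)$, and by $(G_3)$ it satisfies $(g_2)$ with $\delta:=\delta_k$ and $\eta:=\eta_k$. Let $\mathcal{E}_k$ be the energy of the problem $(-\Delta)^su+\mu u=g_k(u)$. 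By Lemma~\ref{l2-1} and Theorem~\ref{th2-1} there is $u_k\in E_{\eta_k}$ minimizing $\mathcal{E}_k$ over $E_{\eta_k}$, with $u_k\in[0,\delta_k]$ a.e.\ in $\Omega$, and $u_k$ is a weak solution of the truncated problem. Since $0\le u_k\le\delta_k\le\eta_k$ we have $g_k(u_k)=g(u_k)$, so $u_k$ solves \eqref{eq2-1}; it is nonnegative, and because $g(0)=0$ it cannot vanish on a set of positive measure once $u_k\not\equiv0$ (on such a set the equation would force $(-\Delta)^su_k=0$ while the nonlocal term is strictly negative), hence $u_k>0$ a.e.\ in $\Omega$ whenever $u_k\not\equiv0$.

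\emph{Step 2: nontriviality (the crux).} I may assume $0\in\Omega$ and fix $R>0$ with $\overline{B_{2R}}\subset\Omega$. By the $\limsup$ part of $(G_2)$ I pick $\zeta_j\downarrow0$ with $G(\zeta_j)/\zeta_j^2\to+\infty$, and by the $\liminf$ part I pick $L\ge0$ and $t^{\ast}>0$ with $G(t)\ge-Lt^2$ on $[0,t^{\ast}]$. For the function $z_{\zeta_j}$ of \eqref{eq2-23} one has $z_{\zeta_j}\equiv\zeta_j$ on $B_R$, $0\le z_{\zeta_j}\le\zeta_j$ on $B_{2R}\setminus B_R$ and $z_{\zeta_j}\equiv0$ off $B_{2R}$, so $\|z_{\zeta_j}\|_{L^2(\Omega)}^2\le|B_{2R}|\zeta_j^2$; combining this with \eqref{eq2-23*} and $G(z_{\zeta_j})\ge-L\zeta_j^2$ on $B_{2R}\setminus B_R$ gives, for $\zeta_j\le t^{\ast}$,
\[
\mathcal{E}(z_{\zeta_j})\le\Big(\tfrac12C(N,R,s)+\big(\tfrac{\mu}{2}+L\big)|B_{2R}|\Big)\zeta_j^2-|B_R|\,G(\zeta_j),
\]
whose right-hand side is strictly negative for all large $j$ since $G(\zeta_j)/\zeta_j^2\to+\infty$. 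For each $k$ I then choose $j(k)$ with $\zeta_{j(k)}<\eta_k$ and $\mathcal{E}(z_{\zeta_{j(k)}})<0$; since $z_{\zeta_{j(k)}}\in E_{\eta_k}$ takes values in $[0,\eta_k]$, on which $g_k\equiv g$, we have $\mathcal{E}_k(z_{\zeta_{j(k)}})=\mathcal{E}(z_{\zeta_{j(k)}})<0=\mathcal{E}_k(0)$, so minimality yields $\mathcal{E}_k(u_k)\le\mathcal{E}_k(z_{\zeta_{j(k)}})<0$ and hence $u_k\not\equiv0$; by Step~1, $u_k$ is a positive solution of \eqref{eq2-1}.

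\emph{Step 3: decay and distinctness.} From $\eta_{k+1}<\delta_k<\eta_k\to0$ I get $\|u_k\|_{L^\infty(\Omega)}\le\delta_k\to0$. Testing the weak form of \eqref{eq2-1} with $\varphi=u_k$ and discarding the nonnegative mass term,
\[
\|u_k\|_{\mX}^2\le\|u_k\|_{\mX}^2+\mu\|u_k\|_{L^2(\Omega)}^2=\int_\Omega g(u_k)u_k\,dx\le\Big(\sup_{[0,s_0]}|g|\Big)|\Omega|\,\delta_k\to0 \quad (k\to\infty).
\]
Finally, the $u_k$ are nonzero with $\|u_k\|_{L^\infty(\Omega)}\to0$, so they take infinitely many distinct values (otherwise $\liminf_k\|u_k\|_{L^\infty(\Omega)}>0$); relabelling a subsequence gives pairwise distinct positive solutions $\{u_k\}_{k\in\mathbb{N}}$ of \eqref{eq2-1} satisfying \eqref{eq2*}. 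The only genuinely delicate step is Step~2: one must use the two halves of $(G_2)$ in tandem --- the divergence $\limsup_{t\to0^+}G(t)/t^2=+\infty$ to dominate the $O(\zeta^2)$ contributions from $\|z_\zeta\|_{\mX}^2$ (Lemma~\ref{l2-3}) and from the mass term, and the lower bound $\liminf_{t\to0^+}G(t)/t^2>-\infty$ to keep $\int_{B_{2R}\setminus B_R}G(z_\zeta)$ under control --- so that the test function has strictly negative energy along a sequence shrinking to the origin; everything else is routine bookkeeping with the $L^\infty$- and $\mX$-bounds.
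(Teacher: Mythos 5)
Your proof is correct and follows essentially the same route as the paper: truncate $g$ at the levels $\eta_k$, minimize the truncated energy over $E_{\eta_k}$ via Theorem \ref{th2-1}, and use the test functions of Lemma \ref{l2-3} together with both halves of $(G_2)$ to force a strictly negative minimum value, hence a nontrivial solution lying in $[0,\delta_k]$. The only departure is minor: you conclude that infinitely many of the $u_k$ are distinct from $\|u_k\|_{L^{\infty}(\Omega)}>0$ together with $\|u_k\|_{L^{\infty}(\Omega)}\to 0$, whereas the paper argues via the energy levels $\mathcal{E}_1(u_k)<0$ and $\mathcal{E}_1(u_k)\to 0$; both are valid, and yours is if anything the cleaner of the two.
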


\begin{proof}
Define the truncation functions $g_{k}:\,\mathbb{R}^{+}\to \mathbb{R}$
as follows
\begin{equation}\label{eq2-16}
g_{k}(t)=g(\tau_{\eta_{k}}(t))\quad k\in \mathbb{N},
\end{equation}
where $\tau_{\eta}:\,[0\,,\,+\infty)\to \mathbb{R}$ is given by
$\tau_{\eta}(t)=\min\{\eta\,,\,t\}$, $t\geq 0$.

Let us consider the problem
\begin{equation}\label{eq2-1-g}
\left\{\begin{array}{lll}
&(-\Delta)^{s}u+\mu u=g_{k}(u),\,\,u\geq 0\quad &\text{in}\,\,\Omega,\\
&u=0\quad &\text{in}\,\,\mathbb{R}^{N}\setminus\Omega,\\
\end{array}\right.
\end{equation}
The associated energy functional with respect to \eqref{eq2-1-g} is
\begin{equation}\label{eq2-17}
\mathcal{E}_{k}(w)=\frac{1}{2}\int\limits_{\mathbb{R}^{N}\times\mathbb{R}^{N}}\frac{| u(x)-u(y)|^{2}}{|x-y|^{N+2s}}dxdy+
\frac{\mu}{2}\int\limits_{\Omega}|u|^{2}dx-\int\limits_{\Omega}G_{k}(u)dx,
\end{equation}
where $G_{k}(u)=\displaystyle\int_{0}^{u}g_{k}(t)dt$.

Without any loss of generality, we may assume that the sequences
$\{\delta_{k}\}_{k\in \mathbb{N}}$and $\{\eta_{k}\}_{k\in \mathbb{N}}$
satisfy $0<\delta_{k}<s_{0}$, $0<\eta_{k}<s_{0}$
for all $k\in \mathbb{N}$, where $s_{0}>0$ is given in $(G_1)$.
From the definition of $g_{k}$ and assumptions $(G_1)$ and $(G_3)$,
it is easy to see that the function $g_{k}(t)$
satisfies the assumptions of Theorem \ref{th2-1} for all $t\in[0\,,\,s_{0}]$.
Therefore,  for every $k\in \mathbb{N}$,
there exists $u_{k}\in E_{\eta_{k}}$ satisfying
\begin{itemize}
\item[(i)] $u_{k}$ is the minimum point of the function $\mathcal{E}_{k}$ over $E_{\eta_{k}}$;
\item[(ii)] $u_{k}$ is a weak solutions of problem \eqref{eq2-1-g} and
           $u_{k}\in[0\,,\,\delta_{k}]$ for a.e. $x\in \Omega$.
\end{itemize}
Moreover, due to \eqref{eq2-16} and $u_{k}\in[0\,,\,\delta_{k}]\subset[0\,,\,\eta_{k}]$,
we get $$g_{k}(u_{k})=g(u_{k})\quad \text{for all}\,\,\, u_{k}\in [0\,,\,\delta_{k}].$$
So $u_{k}$ is a weak solutions not only for Eq. \eqref{eq2-1-g}  but also for the original problem \eqref{eq2-1}.

Next, we  prove that there are many distinct elements in the sequence $\{u_{k}\}_{k\in \mathbb{N}}$,
that is, Eq. \eqref{eq2-1} has infinitely many positive solutions.

By $(G_2)$ there exist $l_{k}>0$ and $\xi\in (0\,,\,\eta_{1})$ such that
\begin{equation}\label{eq2-20}
 G(u)\geq -l_{k}\,u^{2}\quad \text{for all}\,\,u\in (0\,,\,\xi),
\end{equation}
and  there exist $L_{k}>0$ and
 a subsequence of $\{u_{k}\}_{k\in \mathbb{N}}$,
 still denoted by $\{u_{k}\}_{k\in \mathbb{N}}$,
 with $0<u_{k}<\min\{\xi\,,\,\delta_{k}\}$,
 such that
\begin{equation}\label{eq2-22}
 G(u_{k})> L_{k}\,u_{k}^{2}\quad \text{for every}\,\,k\in \mathbb{N}.
\end{equation}

Let $k\in \mathbb{N}$ be fixed and
$z_{k}\in \mX$ be as the function from \eqref{eq2-23}  corresponding
 to the value $u_{k}>0$. It is clear that $z_{k}\in E_{\eta_{k}}$ for every $k\in \mathbb{N}$.

 Now we calculate the function value of $\mathcal{E}_{k}(z_{k})$.
 On account of \eqref{eq2-20}, \eqref{eq2-22} and \eqref{eq2-23*}, we have
\begin{equation}\label{eq2-24}
\aligned
\mathcal{E}_{k}(z_{k})
&=\frac{1}{2}\int\limits_{\mathbb{R}^{N}\times\mathbb{R}^{N}}\frac{|z_{k}(x)-z_{k}(y)|^{2}}{|x-y|^{N+2s}}dxdy+
\frac{\mu}{2}\int\limits_{\Omega}|z_{k}|^{2}dx
-\int\limits_{\Omega}G_{k}(z_{k})dx\\
&\leq \frac{C(N,R,s)}{2}\,u_{k}^{2}+\frac{\mu}{2}|\Omega|\,u_{k}^{2}-\int\limits_{B_{R}}G(z_{k})dx
-\int\limits_{B_{2R}\setminus B_{R}}G(z_{k})dx\\
&\leq \Big(\frac{C(N,R,s)}{2}+\frac{\mu}{2}|\Omega|-L_{k}|B_{R}|+l_{k}|B_{R}|\Big)\,u_{k}^{2},\\
\endaligned
\end{equation}
where $|B_{R}|$ denotes the Lebesgue measure of $B_{R}$.
From \eqref{eq2-24}, we can take $L_{k}>0$ large enough such that
\begin{equation*}\label{eq2-21}
\frac{C(N,R,s)}{2}+\frac{\mu}{2}|\Omega|-L_{k}|B_{R}|+l_{k}|B_{R}|<0,
\end{equation*}
which implies that $\mathcal{E}_{k}(z_{k})<0$ for all $k\in \mathbb{N}$.
Consequently,
\begin{equation}\label{eq2-25}
\mathcal{E}_{k}(u_{k})=\min\limits_{E_{\eta_{k}}}\mathcal{E}_{k}\leq
\mathcal{E}_{k}(z_{k})<0.
\end{equation}

On the other hand, for every $k$, using $(G_{1})$, there exists $M>0$ such that
\begin{equation}\label{eq2-26}
\aligned
\mathcal{E}_{k}(u_{k})
&\geq-\int\limits_{\Omega}G_{k}(u_{k})dx
\geq -|\sup\limits_{t\in [0\,,\,s_{0}]}g_{k}(t)|\,|\Omega|\,\delta_{k}\\
&\geq -M|\Omega|\,\delta_{k}.\\
\endaligned
\end{equation}
Taking the limits $k\to \infty$ in \eqref{eq2-26} and combining with \eqref{eq2-25}, $\delta_{k}\to 0$,
we have
\begin{equation}\label{eq2-19}
\lim\limits_{k\to \infty}\mathcal{E}_{k}(u_{k})=0.
\end{equation}

Moreover, from $u_{k}\leq \delta_{k}<\eta_{k}<\delta_{k-1}<\cdot\cdot\cdot<\eta_{1}$, we obtain
$$\tau_{\eta_{1}}(u_{k})=\min\{\eta_{1}\,,\,u_{k}\}=u_{k}$$
and
$$\tau_{\eta_{k}}(u_{k})=\min\{\eta_{k}\,,\,u_{k}\}=u_{k}$$
for all $k\in \mathbb{N}$. Thus
\begin{equation*}
g_{k}(u_{k})=g(\tau_{\eta_{k}}(u_{k}))=g(u_{k})=g(\tau_{\eta_{1}}(u_{k}))=g_{1}(u_{k})
\quad \forall k\in \mathbb{N},
\end{equation*}
which implies that
\begin{equation}\label{eq2-27}
\mathcal{E}_{k}(u_{k})=\mathcal{E}_{1}(u_{k})\quad \text{for all}\,\,k\in \mathbb{N}.
\end{equation}
Combing \eqref{eq2-25} with \eqref{eq2-19} and \eqref{eq2-27},
we have
$$\lim\limits_{k\to \infty}\mathcal{E}_{1}(u_{k})=0\,\,\,
\text{and}\,\,\,\mathcal{E}_{1}(u_{k})<0\,\,\,\text{for all}\,\,\,k\in \mathbb{N},$$
which yield that the sequence $\{u_{k}\}_{k\in \mathbb{N}}$ contains infinitely many distinct  elements
and $u_{k}\in(0\,,\,\delta_{k})$ for every $k\in \mathbb{N}$.

Finally, we remains to prove \eqref{eq2*}. At first,
it follows from the result (ii) that
 $\|u_{k}\|_{L^{\infty}(\Omega)}\leq \delta_{k}$
for all $k\in \mathbb{N}$, combined with $\lim\limits_{k\to \infty}\delta_{k}=0$,
 and then we get
$$\lim\limits_{k\to \infty}\|u_{k}\|_{L^{\infty}(\Omega)}=0.$$
For the left, from $(G_1)$, $\lim\limits_{k\to \infty}\delta_{k}=0$
and $u_{k}$ is a nontrivial critical point of functional $\mathcal{E}_{1}$, we have
\begin{equation*}\label{eq2-28}
\aligned
\|u_{k}\|^{2}_{\mX}
&\leq \|u_{k}\|^{2}_{\mX}+\mu \int\limits_{\Omega}|u_{k}|^{2}dx=
\int\limits_{\Omega}g(u_{k})u_{k}dx\\
&\leq|\sup g(u_{k})|\,|\Omega|\,\delta_{k}\leq M \delta_{k}\\
&\to 0\quad (\text{as}\,\,k\to \infty),
\endaligned\end{equation*}
which concludes the proof of \eqref{eq2*}. This completes the proof of Theorem \ref{th2-2}.
\end{proof}

Similar as in Theorem \ref{th2-2},
we consider the oscillatory behavior of $g$ at infinity. In this case, we assume
the continuous function $g:\,[0\,,\,+\infty)\to \mathbb{R}$ fulfills
\begin{itemize}
\item[$(G'_1)$] $g(0)=0$ and for every $s>0$, $\sup\limits_{t\in [0,s]}|g(t)|<\infty$;
\item[$(G'_2)$] $-\infty<\liminf\limits_{t\to \infty}\frac{G(t)}{t^2}
                \leq\limsup\limits_{t\to \infty}\frac{G(t)}{t^2}=+\infty$
                uniformly for a.e. $x\in\Omega$,
                where $G(t)=\displaystyle{\int}_{0}^{t}g(\tau)d\tau$;
\item[$(G'_3)$] There exist two sequences $\{\delta_{k}\}_{k\in \mathbb{N}}\subset (0\,,\,+\infty)$
               and $\{\eta_{k}\}_{k\in \mathbb{N}}\subset (0\,,\,\infty)$ with $0<\delta_{k}<\eta_{k}<\delta_{k+1}$,
               $\lim\limits_{k\to \infty}\delta_{k}=+\infty$,
               such that $g(t)\leq 0$ for every $t\in [\,\delta_{k}\,,\,\eta_{k}\,]$.
\end{itemize}
We have the following result.
\begin{theorem}\label{th2-3}
Assume that $(G'_1)-(G'_3)$ hold. Then there exist infinitely many positive solutions
$\{u_{k}\}_{k\in \mathbb{N}}\subset \mX$
of Eq. \eqref{eq2-1} such that
\begin{equation}\label{eq4-1}
\lim\limits_{k\to \infty}\|u_{k}\|_{L^{\infty}(\Omega)}=\infty.
\end{equation}
\end{theorem}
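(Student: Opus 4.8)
The plan is to adapt the proof of Theorem~\ref{th2-2}, truncating $g$ \emph{at infinity} instead of \emph{near the origin} and using $(G'_2)$ in place of $(G_2)$. First I would set $\tau_{\eta}(t)=\min\{\eta,t\}$ and introduce the truncated nonlinearities $g_{k}(t)=g(\tau_{\eta_{k}}(t))$, with $\{\delta_{k}\}$, $\{\eta_{k}\}$ as in $(G'_3)$. Since $\eta_{k}\to\infty$ and $g$ is bounded on each $[0,\eta_{k}]$ by $(G'_1)$, every $g_{k}$ is continuous, bounded and vanishes at $0$, so $(g_{1})$ holds; moreover $g_{k}(t)=g(t)\le 0$ on $[\delta_{k},\eta_{k}]$, which gives $(g_{2})$ with the pair $(\delta_{k},\eta_{k})$. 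Theorem~\ref{th2-1} then produces, for every $k$, a minimizer $u_{k}$ of $\mathcal{E}_{k}$ over $E_{\eta_{k}}$ which is a weak solution of \eqref{eq2-1-g} with $u_{k}\in[0,\delta_{k}]$ a.e.; since $u_{k}\le\delta_{k}\le\eta_{k}$ one has $g_{k}(u_{k})=g(u_{k})$, so each $u_{k}$ in fact solves \eqref{eq2-1}.

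The core of the argument is to prove $\|u_{k}\|_{L^{\infty}(\Omega)}\to\infty$, which at once yields infinitely many distinct solutions. Fix a ball $B_{2R}\subset\Omega$. By the $\limsup$ half of $(G'_2)$ choose $\zeta^{(j)}\to\infty$ with $G(\zeta^{(j)})/(\zeta^{(j)})^{2}\to\infty$; since $\delta_{k}\to\infty$ as well, I can pass to subsequences (relabelled) so that $\zeta_{k}<\delta_{k}<\eta_{k}$, $\zeta_{k}\to\infty$, and $L_{k}:=G(\zeta_{k})/\zeta_{k}^{2}\to\infty$. Let $z_{k}=z_{\zeta_{k}}$ be the tent function of Lemma~\ref{l2-3}: then $z_{k}\in E_{\eta_{k}}$, $\|z_{k}\|_{\mX}^{2}\le C(N,R,s)\zeta_{k}^{2}$, $z_{k}\equiv\zeta_{k}$ on $B_{R}$, and $z_{k}$ takes values in $[0,\zeta_{k}]\subset[0,\eta_{k}]$, so $G_{k}(z_{k})=G(z_{k})$. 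From the $\liminf$ half of $(G'_2)$ together with $(G'_1)$ I get constants $l,C_{0}>0$ with $G(t)\ge -l\,t^{2}-C_{0}$ for all $t\ge 0$; inserting $z_{k}$ into $\mathcal{E}_{k}$ yields
\begin{equation*}
\mathcal{E}_{k}(z_{k})\le\Big(\tfrac{1}{2}C(N,R,s)+\tfrac{\mu}{2}|B_{2R}|+l\,|B_{2R}|-L_{k}\,|B_{R}|\Big)\zeta_{k}^{2}+C_{0}|B_{2R}|\longrightarrow -\infty ,
\end{equation*}
since $L_{k}\to\infty$ and $\zeta_{k}\ge 1$ eventually. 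Hence $\mathcal{E}_{k}(u_{k})=\min_{E_{\eta_{k}}}\mathcal{E}_{k}\le\mathcal{E}_{k}(z_{k})\to-\infty$; in particular $\mathcal{E}_{k}(u_{k})<0=\mathcal{E}_{k}(0)$, so each $u_{k}$ is nontrivial.

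To conclude I would argue by contradiction: if $\|u_{k_{j}}\|_{L^{\infty}(\Omega)}\le C_{*}$ along some subsequence, then for large $j$ one has $\eta_{k_{j}}>C_{*}$, hence $G_{k_{j}}(u_{k_{j}})=G(u_{k_{j}})$ with $|G(u_{k_{j}})|\le C_{*}\sup_{[0,C_{*}]}|g|=:M_{*}<\infty$ by $(G'_1)$, so $\mathcal{E}_{k_{j}}(u_{k_{j}})\ge-\int_{\Omega}G_{k_{j}}(u_{k_{j}})\ge-M_{*}|\Omega|$, contradicting $\mathcal{E}_{k_{j}}(u_{k_{j}})\to-\infty$. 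Thus $\|u_{k}\|_{L^{\infty}(\Omega)}\to\infty$; in particular each value is attained only finitely often, so $\{u_{k}\}$ is infinite, and passing to a pairwise distinct subsequence gives infinitely many positive weak solutions of \eqref{eq2-1} satisfying \eqref{eq4-1}. The main difficulty, compared with Theorem~\ref{th2-2}, is that the identity $\mathcal{E}_{k}(u_{k})=\mathcal{E}_{1}(u_{k})$ is no longer available---the solutions sit at ever higher levels, where the various truncations disagree---so distinctness must be extracted directly from the blow-up of the $L^{\infty}$-norms; correspondingly, the delicate point is the correct interlacing of the oscillation levels $\{\delta_{k}\},\{\eta_{k}\}$ from $(G'_3)$ with the sequence $\{\zeta_{k}\}$ from $(G'_2)$, so that each tent function $z_{\zeta_{k}}$ stays admissible in $E_{\eta_{k}}$ while still forcing $\mathcal{E}_{k}(z_{k})\to-\infty$.
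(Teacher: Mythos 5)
Your proposal is correct and follows essentially the same route as the paper: truncate $g$ at the levels $\eta_k$, invoke Theorem \ref{th2-1} to obtain minimizers $u_k\in[0,\delta_k]$, use the tent functions of Lemma \ref{l2-3} together with $(G'_2)$ to force $\mathcal{E}_k(u_k)\to-\infty$, and then contradict boundedness of $\|u_k\|_{L^\infty(\Omega)}$ via $(G'_1)$. The only (harmless) difference is one of ordering: the paper first deduces infiniteness of the solution set directly from $c_k\to-\infty$ (finitely many distinct $u_k$ would yield only finitely many energy values, since $\mathcal{E}_k(u_k)=\mathcal{E}_{k_0}(u_k)$ for $k\le k_0$) and then derives the $L^\infty$ blow-up, whereas you obtain the blow-up first and read off distinctness from it.
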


\begin{proof}
We consider the minimum problem
$$c_{k}:=\inf\limits_{E_{\eta_{k}}}\mathcal{E}_{k},$$
where the functional $\mathcal{E}_{k}$ is given in \eqref{eq2-17} and
 the truncation function $g_{k}:\,(0\,,\,+\infty)\to \mathbb{R}$ defined by
\begin{equation*}\label{eq4-2}
g_{k}(t)=g(\tau_{\eta_{k}}(t))\quad t>0,
\end{equation*}
where $\tau_{\eta}(t)=\min\{\,\eta\,,\,t\,\}$, $\eta>0$.

On account of the definition of $g_{k}$ and hypotheses $(G'_{1})$, $(G'_{3})$,
it is easy to see that $g_{k}$ fulfills the assumptions of Theorem \ref{th2-1}.
Therefore, for every $k\in \mathbb{N}$,
 there is a $u_{k}\in E_{\eta_{k}}$ such that
\begin{itemize}
\item[(i)] $u_{k}$ is the minimum point of the functional $\mathcal{E}_{k}$ on $E_{\eta_{k}}$;
\item[(ii)] $u_{k}$ is a weak solutions of Eq. \eqref{eq2-1} with $g=g_{k}$, and
$u_{k}\in [\,0\,,\,\delta_{k}\,]$ for a.e. $x\in \Omega$,
\end{itemize}

Thanks to the above results (i) and (ii), we have that
$$\tau_{\eta_{k}}(u_{k})=\min\{\eta_{k}\,,\,u_{k}\}=u_{k},\quad (\text{since}\,\,0<\delta_{k}<\eta_{k})$$
and
$$g_{k}(u_{k})=g(\tau_{\eta_{k}}(u_{k}))=g(u_{k})\quad \text{for every}\,\,k\in \mathbb{N},$$
that is, $\{u_{k}\}_{k\in \mathbb{N}}$ is a solution sequence for the Eq. \eqref{eq2-1}.

Next, we claim that there are infinitely many solutions for problem \eqref{eq2-1}.
To this end, it is enough to show that
\begin{equation}\label{eq4-6}
\lim\limits_{k\to \infty}c_{k}=-\infty.
\end{equation}
Arguing by contradiction, we assume that in the sequence $\{u_{k}\}_{k\in \mathbb{N}}$
there are only finitely distinct elements,
i.e., $\{u_{1},u_{2},\cdot\cdot\cdot,u_{k_{0}}\}$ for some $k_{0}\in \mathbb{N}$.
Then for every $k\in \{1,2,\cdot\cdot\cdot,k_{0}\}$,
we have
$$u_{k}\leq \delta_{k}<\eta_{k}\leq \delta_{k_{0}}<\eta_{k_{0}}.$$
So, for every $k\in \{1,2,\cdot\cdot\cdot,k_{0}\}$, we get
$$g_{k}(u_{k})=g(\tau_{\eta_{k}}(u_{k}))=g(\min\{\eta_{k}\,,\,u_{k}\})=g(u_{k})
=g(\eta_{k_{0}}(u_{k}))=g_{k_{0}}(u_{k}),$$
 which implies $G_{k}(u_{k})=G_{k_{0}}(u_{k})$ for all $k\in \{1,2,\cdot\cdot\cdot,k_{0}\}$.
Therefore, if follows that
\begin{equation*}
\mathcal{E}_{k}(u_{k})=\mathcal{E}_{k_{0}}(u_{k})\quad \text{for every $k\in \{1,2,\cdot\cdot,k_{0}\}$},
\end{equation*}
that is, the sequence $\{\mathcal{E}_{k}(u_{k})\}$
reduces to at most the finite set
$\{\mathcal{E}_{k_{0}}(u_{1}),\mathcal{E}_{k_{0}}(u_{2}),\cdot\cdot\cdot,\mathcal{E}_{k_{0}}(u_{k_{0}})\}$,
which contradicts \eqref{eq4-6}.

Now, we prove \eqref{eq4-6}.
The left hand side of  $(G'_2)$ implies that there exist $l_{\infty}>0$
and $\xi_{0}>0$ such that
\begin{equation}\label{eq4-7}
 G(u)\geq -l_{\infty}u^{2}\quad \text{for all}\,\,u>\xi_{0}.
\end{equation}
Moreover, the right hand side of  $(G'_2)$ implies
there exist $L_{\infty}>0$ and a  strictly increasing sequence
 $\{u_{k}\}_{k\in \mathbb{N}}\subset (0\,,\,+\infty)$
such that $$\lim\limits_{k\to\infty}u_{k}=+\infty,$$ and
\begin{equation}\label{eq4-9}
 G(u_{k})>L_{\infty}\,u_{k}\quad\text{for all}\quad k\in \mathbb{N}.
\end{equation}

 Since $\lim\limits_{k\to \infty}\delta_{k}=+\infty$,
 up to a subsequence if necessary,
we can choose a subsequence of $\{\delta_{k}\}$, still denotes by $\{\delta_{k}\}$, such that
 $u_{k}\leq \delta_{k}$ for all $k\in \mathbb{N}$.
Let the function $\tilde{z}_{k}\in \mX$ be as \eqref{eq2-23}  corresponding
 to the value $u_{k}>0$. It is clear that $\tilde{z}_{k}\in E_{\eta_{k}}$.
Due to \eqref{eq2-23*}, \eqref{eq4-7} and \eqref{eq4-9}, we have
\begin{equation}\label{eq4-10}
\aligned
\mathcal{E}_{k}(\tilde{z}_{k})
&=\frac{1}{2}\int\limits_{\mathbb{R}^{N}\times\mathbb{R}^{N}}\frac{|\tilde{z}_{k}(x)-\tilde{z}_{k}(y)|^{2}}{|x-y|^{N+2s}}dxdy
+\frac{\mu}{2}\int\limits_{\Omega}|\tilde{z}_{k}|^{2}dx-\int\limits_{\Omega}G_{k}(\tilde{z}_{k})dx\\
&\leq \frac{C(N,R,s)}{2}\,u_{k}^{2}+\frac{\mu}{2}|\Omega|\,u_{k}^{2}-\int\limits_{B_{R}}G(\tilde{z}_{k})dx
-\int\limits_{(B_{2R}\setminus B_{R})\cap \{u_{k}>\xi_{0}\}}G(\tilde{z}_{k})dx\\
& -\int\limits_{(B_{2R}\setminus B_{R})\cap \{u_{k}\leq \xi_{0}\}}G(\tilde{z}_{k})dx\\
&\leq \Big(\frac{C(N,R,s)}{2}+\frac{\mu}{2}|\Omega|-L_{k}|B_{R}|+l_{k}|B_{R}|\Big)\,u_{k}^{2}+M\,|\Omega|\,\xi_{0},\\
\endaligned
\end{equation}
where $M:=\sup_{t\in [0\,,\,\xi_{0}]}|g(t)|$ and $|B_{R}|$ denotes the Lebesgue measure of the set $B_{R}$.

If we take $L_{\infty}>0$ large enough such that
\begin{equation}\label{eq4-8}
\frac{C(N,R,s)}{2}+\frac{\mu}{2}|\Omega|-L_{k}|B_{R}|+l_{k}|B_{R}|<0,
\end{equation}
then from \eqref{eq4-10}, \eqref{eq4-8} and $\lim\limits_{k\to \infty}u_{k}=+\infty$,
we get
\begin{equation*}\label{eq4-11}
\lim\limits_{k\to \infty}\mathcal{E}_{k}(\tilde{z}_{k})\to -\infty.
\end{equation*}
Therefore, from the fact that $u_{k}$ is the minimum point of the functional $\mathcal{E}_{k}$,
we have
$$
c_{k}:=\mathcal{E}_{k}(u_{k})=\min\limits_{E_{\eta_{k}}}\mathcal{E}_{k}\leq\mathcal{E}_{k}(\tilde{z}_{k}),
$$
for all $k\in \mathbb{N}$, which implies \eqref{eq4-6}.

Moreover, for every $k\in \mathbb{N}$, it follows from $u_{k}\in[0\,,\,\delta_{k}]\subset [0\,,\,\eta_{k}]$
that $g_{k}(u_{k})=g(u_{k})$,
which implies that Eq. \eqref{eq2-1} has infinitely many positive solutions.

Now, let us prove  \eqref{eq4-1}.
Arguing by contradiction,
we assume that there exists $0<M<+\infty$ such that
 $$\|u_{k}\|_{L^{\infty}(\Omega)}\leq M.$$
By mean value theorem and H\"{o}lder inequality, we note that
\begin{equation*}\label{eq4-15}
\aligned
\mathcal{E}_{k}(u_{k})
&=\frac{1}{2}\int\limits_{\mathbb{R}^{N}\times\mathbb{R}^{N}}\frac{| u_{k}(x)-u_{k}(y)|^{2}}{|x-y|^{N+2s}}dxdy
+\frac{\mu}{2}\int\limits_{\Omega}|u_{k}|^{2}dx
-\int\limits_{\Omega}G(u_{k})\\
&\geq- \int\limits_{\Omega}G(u_{k})\\
&\geq-\max\limits_{u_{k}\in [0,M]}|g(u_{k})|\cdot |\Omega|\cdot M.
\endaligned
\end{equation*}
As a consequence, $c_{k}\geq -\max\limits_{u_{k}\in [0,M]}|g(u_{k})| |\Omega|M$, which contradicts \eqref{eq4-6}.
This concludes the proof of Theorem \ref{th2-3}.
\end{proof}

\section{Proof of Theorems \ref{th1-1} and \ref{th1-2}}

In this section, we will prove the results of Theorem \ref{th1-1} and \ref{th1-2}. Note that $p>1$
may be critical or even supercritical in Theorem \ref{th1-1} (b).
From the assumption, Theorem \ref{th1-1} roughly says that the term defined by
$u\mapsto u^{p}$ $(u>0)$ does not affect the number of solutions of \eqref{eq1-1}
whenever $p>1$. This is also the case for certain values of $\lambda\in \mathbb{R}$
when $p=1$.

 On the other hand, for the case $0<p<1$, the term $u\mapsto u^{p}$ $(u>0)$ may
 compete with the function $f$ near the origin such that the number of the solutions
 becomes finite for the values of $\lambda\in \mathbb{R}$.

\vspace{5mm}
First we give the proof in case that the nonlinear term $f$ is oscillatory near the origin.
\vspace{3mm}

{\bf Proof of Theorem \ref{th1-1}. }
(a) Case $p=1$.
Let
\begin{equation}\label{eq3-1}
\mu=\tilde{\lambda}_{0}-\lambda \quad \text{and}\quad g(u)=\tilde{\lambda}_{0}u+f(u),\,\,\forall u\in \mX,\,\,u\geq 0,
\end{equation}
where $\tilde{\lambda}_{0}\in (\lambda_{0}\,,\,-l_{0})$.
Then $\mu=\tilde{\lambda}-\lambda>\lambda_{0}-\lambda>0$. By $f(0)=0$, we have $g(0)=0$.
Moreover, since
 \begin{equation*}\label{eq3-2}
 \frac{G(u)}{u^{2}}=\frac{\tilde{\lambda}_{0}}{2}+\frac{F(u)}{u^{2}},\quad
 \forall u>0,
 \end{equation*}
  we have
 $$\liminf\limits_{u\to 0^{+}}\frac{G(u)}{u^2}=\frac{\tilde{\lambda}_{0}}{2}+\liminf\limits_{u\to 0^{+}}\frac{F(u)}{u^2}$$
  and
 $$\limsup\limits_{u\to 0^{+}}\frac{G(u)}{u^2}=\frac{\tilde{\lambda}_{0}}{2}+\limsup\limits_{u\to 0^{+}}\frac{F(u)}{u^2}.$$
Then the condition $(f_0)$ implies $(G_2)$.
Finally,
since $l_{0}=\liminf\limits_{u\to 0^{+}}\frac{f(u)}{u}<-\tilde{\lambda}_{0}$,
there exists a positive sequence $\{u_{k}\}_{k\in\mathbb{N}}$
 such that $u_{k}\to 0$ as $k\to \infty$ and
$$\frac{f(u_{k})}{u_{k}}<-\tilde{\lambda}_{0}\quad
\text{for all} \quad k\in \mathbb{N}.$$
Then, by continuity of $f$,
we may choose two sequences $\{\delta_{k}\}_{k\in \mathbb{N}}\subset(0\,,\,\infty)$ and
$\{\eta_{k}\}_{k\in \mathbb{N}}\subset(0\,,\,\infty)$ such that
\begin{equation*}\label{eq3-3}
0<\eta_{k+1}<\delta_{k}<u_{k}<\eta_{k},\quad  \lim\limits_{k\to \infty}\eta_{k}=0,
\end{equation*}
and $$\tilde{\lambda}_{0}u+f(u)\leq 0\,\,\,\text{for all} \,\,\,u\in [\delta_{k}\,,\,\eta_{k}].$$
So $g(u)\leq 0$ for every $u\in [\delta_{k}\,,\,\eta_{k}]$, that is,
the assumption $(G_3)$ of Theorem \ref{th2-2} holds too.
It remains to apply Theorem \ref{th2-2},
observing that Eq. \eqref{eq1-1} is equivalent to problem \eqref{eq2-1} via the choice \eqref{eq3-1}.

(b) Case $p>1$.
Let $\lambda_{0}\in (0\,,\,-l_{0})$ and choose
\begin{equation}\label{eq3-4}
\mu=\lambda_{0}, \quad g(u)=\lambda u^{p}+\lambda_{0}u+f(u),
\quad \forall u\in \mX,\,\, \,u\geq 0.
\end{equation}
Then $\mu>0$ and $g(0)=0$, which yields that $(G_1)$ holds.

  Moreover, since $p>1$ and
\begin{equation*}\label{eq3-5}
\frac{G(u)}{u^2}=\frac{1}{p+1}\lambda \,u^{p-1}+\frac{\lambda_{0}}{2}+\frac{F(u)}{u^2},
\end{equation*}
we have
$$\liminf\limits_{u\to 0^{+}}\frac{G(u)}{u^2}=\frac{\lambda_{0}}{2}+\liminf\limits_{u\to 0^{+}}\frac{F(u)}{u^2}$$
and
$$\limsup\limits_{u\to 0^{+}}\frac{G(u)}{u^2}=\frac{\lambda_{0}}{2}+\limsup\limits_{u\to 0^{+}}\frac{F(u)}{u^2}.$$
 Then the hypothesis $(f_1)$ implies ($G_2$) holds.

Now we prove $(G_3)$ holds.
For every $u\geq 0$,
\begin{equation*}\label{eq3-6}
g(u)\leq |\lambda|\,u^{p}+\lambda_{0}\,u+f(u),
\end{equation*}
 implies
$$\liminf\limits_{u\to 0^{+}}\frac{g(u)}{u}\leq \lambda_{0}+\liminf\limits_{u\to 0^{+}}\frac{f(u)}{u}=\lambda_{0}+l_{0}<0.$$
In particular, we may fix a positive sequence $\{u_{k}\}_{k\in \mathbb{N}}$
converging to 0 as $k\to \infty$ such that $g(u_{k})\leq 0$ for all $k\in \mathbb{N}$.
Then by the continuity of $g$, we can choose two positive sequences $\{\delta_{k}\}_{k\in \mathbb{N}}$,
$\{\eta_{k}\}_{k\in \mathbb{N}}$ with $0<\eta_{k+1}<\delta_{k}<u_{k}<\eta_{k}$ such that
  $$\eta_{k}\to 0\,\,\,\text{as}\,\,\,k\to \infty,$$
and
\begin{equation*}\label{eq3-7}
g(u)\leq 0\quad\text{for all }\,\,u\in [\delta_{k}\,,\,\eta_{k}].
\end{equation*}
Therefore,  the hypothesis $(G_3)$ holds for every $k\in \mathbb{N}$.
Now we can  apply Theorem \ref{th2-2} since
 Eq. \eqref{eq1-1} is equivalent to problem \eqref{eq2-1} through the choice \eqref{eq3-4}.
 This completes the proof of Theorem \ref{th1-1}.

\vspace{10mm}

{\bf Proof of Theorem \ref{th1-2}.}
The proof is divided into three steps.

{\bf Step 1.} Let $\lambda_{0}\in (0\,,\,-l_{0})$ and choose
\begin{equation}\label{eq3-9}
\mu=\lambda_{0},\,\,\,\text{and}\,\,\, g(u)=\lambda \,u^{p}+\lambda_{0}\,u+f(u),\quad \forall u\in \mX,\,\,u\geq 0.
\end{equation}

Define the function $\widetilde{g}:\,\mathbb{R}\times[0\,,\,\infty)\to \mathbb{R}$ by
 \begin{equation*}\label{eq3-9*}
\widetilde{g}(\lambda\,,\,u)=|\lambda|\,u^{p}+\lambda_{0}\,u+f(u).
\end{equation*}
It is easy to check that $\widetilde{g}\in C^{1}$ with respect to $u$ and $\lambda$,
 and $g(u)\leq \widetilde{g}(\lambda\,,\,u)$ for every $\lambda\in \mathbb{R}$ and $u\in [0\,,\,+\infty)$

Since $l_{0}:=\lim\inf\limits_{u\to 0^{+}}\frac{f(u)}{u}<0$ and $\lambda_{0}\in (0\,,\,-l_{0})$,
there exists a sequence $\{u_{k}\}_{k\in \mathbb{N}}\subset (0\,,\,\infty)$
converging to 0 as $k\to \infty$ such that
 $\frac{f(u_{k})}{u_{k}}<-\lambda_{0}$,
that is, $\lambda_{0}\,u_{k}+f(u_{k})<0$ for all $u_{k}>0$.
 Therefore, $\widetilde{g}(0\,,\,u_{k})<0$ for all $u_{k}>0$.
 By the continuity of $\widetilde{g}$,
 we can choose three sequences $\{\delta_{k}\}_{k\in \mathbb{N}}$,
 $\{\eta_{k}\}_{k\in \mathbb{N}}$ and $\{\lambda_{k}\}_{k\in \mathbb{N}}\subset (0\,,\,\infty)$ such that
 \begin{equation*}\label{eq3-10}
 0<\eta_{k+1}<\delta_{k}<u_{k}<\eta_{k},\quad \lim\limits_{k\to \infty}\eta_{k}=0,
 \quad \lim\limits_{k\to \infty}\lambda_{k}=0.
 \end{equation*}
 Moreover, for any $\lambda\in[-\lambda_{k}\,,\,\lambda_{k}]$ and every $k\in \mathbb{N}$, we have
 \begin{equation}\label{eq3-10*}
 g(u)\leq \widetilde{g}(\lambda\,,\,u)\leq 0\quad
 \forall u\in [\delta_{k}\,,\,\eta_{k}].
 \end{equation}

Define the function $g_{k}:\,[0\,,\,\infty)\to \mathbb{R}$ as follow
\begin{equation*}\label{eq3-13}
g_{k}(u)=g(\tau_{\eta_{k}}(u)),
\end{equation*}
where $\tau_{\eta}(t)=\min\{\,\eta\,,\,t\,\}$, $t>0$ and $g$ is given by \eqref{eq3-9}.
Then for every $k\in \mathbb{N}$ and  all $u\in[\delta_{k}\,,\,\eta_{k}] $,
from \eqref{eq3-10*}, we have
$$g_{k}(u)=g(\tau_{\eta_{k}}(u))=g(u)\leq 0,$$
which implies that the function $g_{k}$ verifies the hypotheses $(g_2)$ of Theorem \ref{th2-1}.
Moreover, the function $g$ is continuous and bounded on the set $[0\,,\,\eta_{1}]$,
so $(g_1)$ holds too.
Therefore, for every $k\in \mathbb{N}$ and $\lambda\in [-\lambda_{k}\,,\,\lambda_{k}]$,
 applying Theorem \ref{th2-1}, we get
 the following results:
\begin{itemize}
\item[(1)]there exists $u_{k,\lambda}\in E_{\eta_{k}}$such that
$\mathcal{E}_{k,\lambda}(u_{k,\lambda})=\min\limits_{u\in E_{\eta_{k}}}\mathcal{E}_{k,\lambda}$;
\item[(2)] $u_{k,\lambda}$ is a weak solution of Eq. \eqref{eq2-1} with $g=g_{k}$,
and $u_{k,\lambda}\in [0\,,\,\delta_{k}]$, where
\end{itemize}
 $$\mathcal{E}_{k,\lambda}(u)
=\frac{1}{2}\int\limits_{\mathbb{R}^{N}\times\mathbb{R}^{N}}\frac{|u(x)-u(y)|^{2}}{|x-y|^{N+2s}}dxdy+
\frac{\lambda_{0}}{2}\int\limits_{\Omega}|u|^{2}dx
-\int\limits_{\Omega}\int\limits_{0}^{u}g_{k}(t)dtdx.$$
Due to the definition of the functions $g_{k}$ and $\lambda_{0}>0$,
$u_{k,\lambda}$ is a weak solution not only for Eq.\eqref{eq2-1},
but also for our initial problem \eqref{eq1-1},
once that we guarantee that $u_{k,\lambda}\not\equiv0$.

{\bf Step 2.}
For $\lambda=0$, since $g(u)=\lambda_{0}\,u+f(u)$, the function
$g_{k}(u)$ also satisfies the hypotheses $(G_{1})$, $(G_2)$ and $(G_3)$ of Theorem \ref{th2-2}.
Consequently, similar as in the proof of the Theorem \ref{th2-2}, the solution $u_{k,0}$
 verifies  the results (1) and (2) in the Step 1,
and
\begin{equation}\label{eq3-18}
\mathcal{E}_{k,0}(u_{k,0})=\min\limits_{E_{\eta_{k}}}\mathcal{E}_{k,0}\leq
\mathcal{E}_{k,0}(z_{u_{k,0}})<0\quad \text{for every}\,\, k\in \mathbb{N},
\end{equation}
with
\begin{equation}\label{eq3-18*}
\lim\limits_{k\to \infty}\mathcal{E}_{k,0}(u_{k,0})=0,
\end{equation}
where $z_{u_{k,0}}\in E_{\eta_{k}}$ comes form \eqref{eq2-23} corresponding to $u_{k,0}$.

 Thanks to \eqref{eq3-18} and \eqref{eq3-18*},
 we may assume that there is a strickly increasing sequence
 $\{\theta_{k}\}_{k\in \mathbb{N}}$ such that $\lim\limits_{k\to +\infty}\theta_{k}=0$
and
\begin{equation*}\label{eq3-19}
\theta_{k}<\mathcal{E}_{k,0}(u_{k,0})\leq \mathcal{E}_{k,0}(z_{u_{k,0}})<\theta_{k+1}<0.
\end{equation*}

 Set
 $$
\alpha_{k}=\frac{(p+1)(\theta_{k+1}-\mathcal{E}_{k,0}(z_{u_{k,0}}))}{\|u_{k,0}\|^{p}_{L^{p}(\Omega)}}
$$
and
$$
\beta_{k}=\frac{(p+1)(\mathcal{E}_{k,0}(u_{k,0})-\theta_{k})}{\|u_{k,0}\|^{p}_{L^{p}(\Omega)}}.
$$
Thus $\alpha_{k}>0,\,\beta_{k}>0$ for all $k\in \mathbb{N}$.
Let $$\widetilde{\lambda}_{k}=\min\{\lambda_{1},\cdot\cdot\cdot,\lambda_{k},\alpha_{1},
\cdot\cdot\cdot,\alpha_{k},\beta_{1},\cdot\cdot\cdot,\beta_{k}\}>0.$$
Then for every $i\in\{1,2,\cdot\cdot\cdot,k\}$ and
$\lambda\in [-\tilde{\lambda}_{k}\,,\,\tilde{\lambda}_{k}]$ we have
\begin{equation}\label{eq3-21}
\aligned
\mathcal{E}_{i,\lambda}(u_{i,\lambda})
&\leq \mathcal{E}_{i,\lambda}(z_{u_{i,0}})\\
&=\frac{1}{2}\int\limits_{\mathbb{R}^{N}\times\mathbb{R}^{N}}\frac{| z_{u_{i,0}}(x)-z_{u_{i,0}}(y)|^{2}}{|x-y|^{N+2s}}dxdy+
\frac{\lambda_{0}}{2}\int\limits_{\Omega}|z_{u_{i,0}}|^{2}dx-\int\limits_{\Omega}\int\limits_{0}^{z_{u_{i,0}}}g_{i}(t)dtdx\\
&=\frac{1}{2}\int\limits_{\mathbb{R}^{N}\times\mathbb{R}^{N}}\frac{| z_{u_{i,0}}(x)-z_{u_{i,0}}(y)|^{2}}{|x-y|^{N+2s}}dxdy+
\frac{\lambda_{0}}{2}\int\limits_{\Omega}|z_{u_{i,0}}|^{2}dx-\int\limits_{\Omega}\int\limits_{0}^{z_{u_{i,0}}}g(t)dtdx\\
&=\frac{1}{2}\int\limits_{\mathbb{R}^{N}\times\mathbb{R}^{N}}\frac{| z_{u_{i,0}}(x)-z_{u_{i,0}}(y)|^{2}}{|x-y|^{N+2s}}dxdy
-\int\limits_{\Omega}F(z_{u_{i,0}})dx-\frac{\lambda}{p+1}\int\limits_{\Omega}|z_{u_{i,0}}|^{p+1}dx\\
&=\mathcal{E}_{0,i}(z_{u_{i,0}})-\frac{\lambda}{p+1}\int\limits_{\Omega}z_{u_{i,0}}^{p+1}dx\\
&<\theta_{i+1}.
\endaligned
\end{equation}

On the other hand,
taking into  account that $u_{i,\lambda}\in E_{\eta_{i}}$ and $u_{i,0}$
is the minimum point of functional $\mathcal{E}_{i,0}$ over the set $E_{\eta_{i}}$,
we get
\begin{equation}\label{eq3-22}
\aligned
\mathcal{E}_{i,\lambda}(u_{i,\lambda})
&=\mathcal{E}_{i,0}(u_{i,\lambda})-\frac{\lambda}{p+1}\int\limits_{\Omega}|u_{i,\lambda}|^{p+1}dx\\
&\geq \mathcal{E}_{i,0}(u_{i,0})-\frac{\lambda}{p+1}\int\limits_{\Omega}|u_{i,\lambda}|^{p+1}dx\\
&>\theta_{i}.
\endaligned
\end{equation}
Thus, from \eqref{eq3-21} and \eqref{eq3-22},
 for every $i\in\{1,2,\cdot\cdot\cdot,k\} $ and $\lambda\in [\,-\widetilde{\lambda}_{k}\,,\,\widetilde{\lambda}_{k}\,]$,
  we have
\begin{equation}\label{eq3-23}
\theta_{i}<\mathcal{E}_{i,\lambda}(u_{i,\lambda})<\theta_{i+1}<0.
\end{equation}
Moreover, we obtain
\begin{equation}\label{eq3-24}
\theta_{1}<\mathcal{E}_{1,\lambda}(u_{1,\lambda})<\theta_{2}<\mathcal{E}_{2,\lambda}(u_{2,\lambda})
<\cdot\cdot\cdot<\theta_{k}<\mathcal{E}_{k,\lambda}(u_{k,\lambda})<\theta_{k+1}<0,
\end{equation}
and, from
 $u_{i,\lambda}\in E_{\eta_{i}}\subset E_{\eta_{i-1}}\subset \cdot\cdot\cdot\subset E_{\eta_{1}} $,
$$\mathcal{E}_{i,\lambda}(u_{i,\lambda})=\mathcal{E}_{1,\lambda}(u_{i,\lambda})\quad \text{for every}\,\,i\in \{1,2,\cdot\cdot\cdot,k\}.$$
Therefore, from \eqref{eq3-24},
we obtain that for every $\lambda\in[\,-\widetilde{\lambda}_{k}\,,\,\widetilde{\lambda}_{k}\,],$
\begin{equation}\label{eq3-25}
\mathcal{E}_{1,\lambda}(u_{1,\lambda})
<\mathcal{E}_{1,\lambda}(u_{2,\lambda})\cdot\cdot\cdot<
\mathcal{E}_{1,\lambda}(u_{k-1,\lambda})<
\mathcal{E}_{1,\lambda}(u_{k,\lambda})<0
=\mathcal{E}_{1,\lambda}(0),
\end{equation}
which implies that $u_{1,\lambda}$, $\cdot\cdot\cdot$,
$u_{k,\lambda}$ are distinct and nontrivial positive solutions to problem \eqref{eq1-1} whenever $\lambda\in[-\widetilde{\lambda}_{k}\,,\,\widetilde{\lambda}_{k}]$.

{\bf Step 3.} It remains to prove  \eqref{eq1-5}.
We observe that for every $i\in\{1,2,\cdot\cdot\cdot,k\}$ and  $\lambda\in[\,-\widetilde{\lambda}_{k}\,,\,\widetilde{\lambda}_{k}\,]$,
\begin{equation}\label{eq3-26}
\mathcal{E}_{1,\lambda}(u_{i,\lambda})=
\mathcal{E}_{i,\lambda}(u_{i,\lambda})<\theta_{k+1}<0.
\end{equation}
Consequently, for every $k\in \{1,2,\cdot\cdot\cdot,\}$ and
$\lambda\in[-\widetilde{\lambda}_{k}\,,\,\widetilde{\lambda}_{k}]$, we obtain
\begin{equation*}\label{eq3-27}
\aligned
\frac{1}{2}\|u_{i,\lambda}\|_{\mX}^{2}
&<\frac{\lambda}{p+1}\int\limits_{\Omega}u_{i,\lambda}^{p+1}dx+\int\limits_{\Omega}F(u_{i,\lambda})dx\\
&\leq \Big{[}\frac{\tilde{\lambda}_{k}}{p+1}\eta_{1}^{p}mes(\Omega)+mes(\Omega)\max\limits_{u\in[0,1]}|f(u)|\Big{]}\delta_{k}\\
&\leq \frac{1}{2\,i^{2}},
\endaligned
\end{equation*}
where \begin{equation*}\label{eq3-11}
\delta_{i}\leq \min\Big{\{}\frac{1}{i}\,,\,
\frac{1}{2\,i^{2}}\Big(\frac{\tilde{\lambda}_{k}}{p+1}\eta_{1}^{p}mes(\Omega)
+mes(\Omega)\max\limits_{u\in [0\,,\,1]}|f(u)|\Big)^{-1}\Big{\}}.
\end{equation*}
This concludes the proof of Theorem \ref{th1-2}.

\vspace{10mm}

\section{Proofs of Theorems \ref{th1-3} and \ref{th1-4}}
In order to prove Theorem \ref{th1-3} and \ref{th1-4},
we follow more or less the technique of the previous Section 4.
For the completeness, we give all the details.

We consider again the problem
\begin{equation}\label{eq2-1-gg}
\left\{\begin{array}{lll}
&(-\Delta)^{s}u+\mu u=g_{k}(u),\,\,u\geq 0\quad &\text{in}\,\,\Omega,\\
&u=0\quad &\text{in}\,\,\mathbb{R}^{N}\setminus\Omega,\\
\end{array}\right.
\end{equation}
where the continuous functions $g $ fulfills $(G'_{1})$, $(G'_{2})$ and $(G'_{3})$.
Thus the results of Theorem \ref{th2-2} hold true for every $k\in \mathbb{N}$.
\vspace{3mm}

{\bf Proof of Theorem \ref{th1-3}.}
(a) Case $p=1$. For all $u\in [0\,,\,+\infty)$, let $\hat{\lambda}_{\infty}\in (\lambda_{\infty}\,,\,-l_{\infty})$ and choose
\begin{equation}\label{eq4-17}
\mu=\hat{\lambda}_{\infty}-\lambda\quad \text{and}
\quad g(u)=\hat{\lambda}_{\infty}\,u+f(u).
\end{equation}
It is clear that $\mu=\hat{\lambda}_{\infty}-\lambda\geq \hat{\lambda}_{\infty}-\lambda_{\infty}>0$.
Since $f(0)=0$ and the continuity of $f$ ,
we have $g(0)=0$ and  $\sup\limits_{u\in [0\,,\,t_{0}]}|g(u)|\in L^{\infty}$
for every $t_{0}>0$, i.e., $(G'_1)$ holds too.

Moreover,
 since $$\frac{G(u)}{u^2}=\frac{\hat{\lambda}_{\infty}}{2}+\frac{F(u)}{u^2},\quad u>0,$$
 where $G(u)=\int_{0}^{u}g(t)dt$, we have
 $$\liminf\limits_{u\to \infty}\frac{G(u)}{u^2}
 =\frac{\hat{\lambda}_{\infty}}{2}
 +\liminf\limits_{u\to \infty}\frac{F(u)}{u^2}$$
 and
 $$
 \quad\limsup\limits_{u\to \infty}\frac{G(u)}{u^2}
 =\frac{\hat{\lambda}_{\infty}}{2}
 +\limsup\limits_{u\to \infty}\frac{F(u)}{u^2}.$$
 Then $(f'_2)$ implies $(G'_2)$.

  Finally, from $\liminf\limits_{u\to \infty}\frac{f(u)}{u}=l_{\infty}<-\bar{\lambda}_{\infty}$
  and the continuity of $f$, there is a sequence
 $\{u_{k}\}_{k\in\mathbb{N}}\subset (0\,,\,+\infty)$ converging to $+\infty$ such that
 $$\frac{f(u_{k})}{u_{k}}<-\bar{\lambda}_{\infty}\quad \text{for all }\,\,k\in \mathbb{N}.$$
By the continuity of $f$, we may fix two sequences $\{\delta_{k}\}_{k\in\mathbb{N}}$
 and $\{\eta_{k}\}_{k\in\mathbb{N}}\subset (0\,,\,+\infty)$ satisfying
  $0<\delta_{k}<u_{k}<\eta_{k}<\delta_{k+1}$,
$\lim\limits_{k\to \infty}\delta_{k}=\infty$, and $\bar{\lambda}_{\infty}u+f(u)\leq 0$ for all
$u\in [\delta_{k}\,,\,\eta_{k}]$, $k\in \mathbb{N}$.
Therefore, $(G'_3)$ is also fulfilled.
Now, it remains to apply Theorem \ref{th2-3} by the
choice of \eqref{eq4-17} to get the assertion in Theorem \ref{th1-3} (a).

(b) Case $0<p<1$.  For all $u\in (0\,,\,+\infty)$ and let
\begin{equation}\label{eq4-18}
\mu=\lambda_{\infty}\quad\text{and}\quad g(u)=\lambda \,u^{p}+\lambda_{\infty}u+f(u),
\end{equation}
where $\lambda_{\infty}\in (0\,,\,-l_{\infty})$. It is easy to see that
$(G'_1)$ is satisfied.

On the other hand, since $0<p<1$ and
$$\frac{G(u)}{u^2}=\frac{\lambda }{p+1}u^{p-1}+\frac{\lambda_{\infty}}{2}+\frac{F(u)}{u^2},\quad u>0,$$
hypothesis $(f'_1)$ implies $(G'_2)$ holds too.

Moreover, for every $u\in (0\,,\,+\infty)$ we have
$$g(u)=\lambda \,u^{p}+\lambda_{\infty}u+f(u)\leq |\lambda|\,u^{p}+\lambda_{\infty}u+f(u).$$
Thanks to $0<p<1$, $\lambda_{\infty}<-l_{\infty}$ and $(f'_2)$, we have
$$\lim\limits_{u\to \infty}\frac{g(u)}{u}\leq
|\lambda| \lim\limits_{u\to \infty}u^{p-1}+\lambda_{\infty}+\lim\limits_{u\to \infty}\frac{f(u)}{u}=\lambda_{\infty}+l_{\infty}<0.$$
Therefore, there exists a sequence $\{u_{k}\}_{k\in \mathbb{N}}\subset (0\,,\,+\infty)$ converging to $+\infty$
such that $g(u_{k})<0$ for all $k\in \mathbb{N}$.
By the continuity of $g$, there exist two sequences $\{\delta_{k}\}_{k\in \mathbb{N}}$
 and $\{\eta_{k}\}_{k\in \mathbb{N}}\subset (0\,,\,+\infty)$ satisfying
 $0<\delta_{k}<u_{k}<\eta_{k}<\delta_{k+1}$, $\lim\limits_{k\to \infty}\delta_{k}=\infty$,
and $g(u)\leq 0$ for all $u\in [\delta_{k}\,,\,\eta_{k}]$, $k\in \mathbb{N}$.
Therefore, the hypotheses $(G'_3)$ holds for all $k\in \mathbb{N}$.
Now we can apply Theorem \ref{th2-3} to know that problem \eqref{eq1-1} is
equivalent to Eq. \eqref{eq2-1} by the choose of
\eqref{eq4-18}. We get the assertion.

\vspace{10mm}

{\bf Proof of Theorem \ref{th1-4}.}
The proof is divided into three steps.

{\bf Step 1.} \quad  Let $\lambda_{\infty}\in (0\,,\,-l_{\infty})$ and choose
$$\mu=\lambda_{\infty},\quad g(u)=\lambda \,u^{p}+\lambda_{\infty}\,u+f(u).$$

Define the function $\widetilde{g}:\,(-\infty\,,\,+\infty)\times[0\,,\,\infty)\to \mathbb{R}$ as follows:

\begin{equation}\label{eq4-22}
\widetilde{g}(\lambda,u)=|\lambda|\,u^{p}+\lambda_{\infty}\,u+f(u).
\end{equation}
Then we have $g(u)\leq \widetilde{g}(\lambda,u)$ for all $u\in[0\,,\,+\infty)$ and $\lambda\in \mathbb{R}$.
On account of $(f'_2)$ and $l_{\infty}<-\lambda_{\infty}$,
 there exists a sequence $\{u_{k}\}_{k\in \mathbb{N}}\subset (0\,,\,+\infty)$
 converging to $+\infty$
 such that $\frac{f(u_{k})}{u_{k}}<-\lambda_{\infty}$,
 that is, $\lambda_{\infty}\,u_{k}+f(u_{k})<0$ for all $u_{k}>0$.
 Therefore $\widetilde{g}(0,u_{k})<0$ for all $u_{k}>0$.

 By the continuity of $\widetilde{g}$, there exist
  three sequences $\{\delta_{k}\}_{k\in \mathbb{N}},\{\eta_{k}\}_{k\in \mathbb{N}}\subset (0\,,\,+\infty)$
  and $\{\lambda_{k}\}_{k\in \mathbb{N}}\subset (0\,,\,+\infty)$ such that
 $0<\delta_{k}<u_{k}<\eta_{k}<\delta_{k+1}$,
 $\lim\limits_{k\to \infty}\delta_{k}=\infty$,
 $\lim\limits_{k\to \infty}\lambda_{k}=0$ and for every $k\in \mathbb{N}$,
 \begin{equation}\label{eq4-23}
 \widetilde{g}(\lambda,u)\leq 0\quad \text{for all}\,\,\lambda\in[-\lambda_{k}\,,\,\lambda_{k}]\,\,
 \text{and}\,\,
 u\in [\delta_{k}\,,\,\eta_{k}].
 \end{equation}
Then taking into account of \eqref{eq4-23} and $g(u)\leq \widetilde{g}(\lambda,u)$, we have
 for every $k\in \mathbb{N}$ and for all $\lambda\in[-\lambda_{k}\,,\,\lambda_{k}]$
\begin{equation*}\label{eq4-24}
 g(u)\leq 0\quad \text{for all}\,\, u\in [\,\delta_{k}\,,\,\eta_{k}\,].
\end{equation*}

Define the function $g_{k}:\,[0\,,\,\infty)\to \mathbb{R}$ by
\begin{equation*}\label{eq4-26}
g_{k}(u)=g(\tau_{\eta_{k}}(u)),
\end{equation*}
where $\tau_{\eta}(t)=\min\{\eta\,,\,t\}$, $t>0$.
Then from $\delta_{k}<u_{k}<\eta_{k}$ for all $k\in \mathbb{N}$, we get
$$g_{k}(u)=g(\tau_{\eta_{k}}(u))=g(u)\quad \text{for all}\,\,u\in(\delta_{k}\,,\,\eta_{k}),$$
which implies that $g_{k}$ satisfies the hypotheses $(g_2)$ of Theorem \ref{th2-1}.
Therefore for every $k\in \mathbb{N}$ and $\lambda\in [-\lambda_{k}\,,\,\lambda_{k}]$,
applying  Theorem \ref{th2-1}, we obtain
\begin{itemize}
\item[(1)] there exists $ u_{k,\lambda}\in E_{\eta_{k}}$ such that
$\mathcal{E}_{k,\lambda}(u_{k,\lambda})=\min\limits_{u\in E_{\eta_{k}}}\mathcal{E}_{k,\lambda}$;
\item[(2)] $u_{k,\lambda}$ is a weak solution of Eq. \eqref{eq2-1} with $g=g_{k}$ and $u_{k,\lambda}\in [0\,,\,\delta_{k}]$.
\end{itemize}

Due to the definition of $g_{k}$ and $\mu$,
$u_{k,\lambda}$ is a  weak solution not only for Eq. \eqref{eq2-1},
 but also for our initial problem \eqref{eq1-1},
once we guarantee that $u_{k,\lambda}>0$ for all $k\in \mathbb{N}$.

{\bf Step 2.}\quad
Let $\lambda=0$, we get $g(u)=\lambda_{\infty}u+f(u)$.  Then
as in the proof of Theorem \ref{th1-3} (a) we have that the function
$g_{k}(u)=g(\tau_{\eta_{k}}(u))$ verifies the hypotheses of Theorem \ref{th2-3} whenever $\lambda=0$.
Consequently, there exists a sequence $\{u_{k,0}\}_{k\in \mathbb{N}}$  verifying the results
 (1) and (2) in Step 1.
 And moreover similar as in the proof of the Theorem \ref{th2-3},
 there exists a subsequence of $\{u_{k,0}\}_{k\in \mathbb{N}}$,
denoted by $\{u_{k_{i},0}\}_{i\in \mathbb{N}}$,
such that
\begin{equation}\label{eq4-31}
\mathcal{E}_{k_{i},0}(u_{k_{i},0})=\min\limits_{E_{\eta_{k_{i}}}}\mathcal{E}_{k_{i},0}\leq
\mathcal{E}_{k_{i},0}(z_{u_{k_{i}}}),\quad \forall \,\,i\in \mathbb{N},
\end{equation}
and
\begin{equation}\label{eq4-32}
\lim\limits_{i\to \infty}\mathcal{E}_{k_{i},0}(u_{k_{i},0})=-\infty.
\end{equation}
Without any loss of generality, we may assume that $k_{i}\geq i$ for all $i\in \mathbb{N}$. Then
from \eqref{eq4-31} and \eqref{eq4-32}, we may assume that there is a negative decreasing sequence
$\{\theta_{i}\}_{i\in \mathbb{N}}$,  up to a subsequence $\{\theta_{i}\}_{i\in\mathbb{N}}$,
such that
$\lim_{i\to +\infty}\theta_{i}=-\infty$
and
\begin{equation}\label{eq4-33}
\theta_{i+1}<\mathcal{E}_{k_{i},0}(u_{k_{i},0})\leq \mathcal{E}_{k_{i},0}(z_{u_{k_{i}}})<\theta_{i}.
\end{equation}

 Set
 $$
\alpha_{i}=\frac{(p+1)(\theta_{i}-\mathcal{E}_{k_{i},0}(z_{u_{k_{i}}}))}
{\delta^{p+1}_{k_{i}}}$$
and
$$\beta_{i}=\frac{(p+1)(\mathcal{E}_{k_{i},0}(u_{k_{i},0})-\theta_{i+1})}
{\delta^{p+1}_{k_{i}}},
$$
where $\{\delta_{k_{i}}\}_{i\in \mathbb{N}}$ is a subsequence of $\{\delta_{k}\}_{k\in \mathbb{N}}$
with $0<u_{k_{i},0}<\delta_{k_{i}}$ for every $i\in \mathbb{N}$.
Due to \eqref{eq4-33}, we get
 $$\tilde{\lambda}_{k}=\min\{\lambda_{1},\cdot\cdot\cdot,\lambda_{k},\alpha_{1},
\cdot\cdot\cdot,\alpha_{k},\beta_{1},\cdot\cdot\cdot,\beta_{k}\}>0.$$
Then for every $i\in\{1,2,\cdot\cdot\cdot,k\}$ and
$\lambda\in [-\tilde{\lambda}_{k}\,,\,\tilde{\lambda}_{k}]$, we have
\begin{equation}\label{eq4-36}
\aligned
\mathcal{E}_{k_{i},\lambda}(u_{k_{i}})
&\leq \mathcal{E}_{k_{i},\lambda}(z_{u_{k_{i},\lambda}})\\
&=\frac{1}{2}\int\limits_{\mathbb{R}^{N}\times\mathbb{R}^{N}}\frac{| z_{u_{k_{i}}}(x)- z_{u_{k_{i}}}(y)|^{2}}{|x-y|^{N+2s}}dxdy+
\frac{\lambda}{p+1}\int\limits_{\Omega}|z_{u_{k}}|^{p+1}dx-\int\limits_{\Omega}\int\limits_{0}^{z_{u_{k}}}g_{k}(t)dtdx\\
&=\mathcal{E}_{k_{i},0}(z_{u_{k_{i}}})-\frac{\lambda}{p+1}\int\limits_{\Omega}u_{k,\lambda}^{p+1}dx\\
&<\theta_{i}.
\endaligned
\end{equation}

On the other hand, since $u_{k_{i},\lambda}\in E_{\eta_{k_{i}}}$ and $u_{k_{i},0}$
is the minimum point of $\mathcal{E}_{k_{i},0}$ over the set $E_{\eta_{k_{i}}}$,
it yields
\begin{equation}\label{eq4-37}
\aligned
\mathcal{E}_{k_{i},\lambda}(u_{k_{i},\lambda})
&=\mathcal{E}_{k_{i},0}(u_{k_{i},\lambda})-\frac{\lambda}{p+1}\int\limits_{\Omega}|u_{k_{i},\lambda}|^{p+1}dx\\
&\geq \mathcal{E}_{k_{i}}(u_{k_{i},0})-\frac{\lambda}{p+1}\int\limits_{\Omega}|u_{m_k,\lambda}|^{p+1}dx\\
&>\theta_{i+1}.
\endaligned
\end{equation}
Then, for every $i\in\{1,2,\cdot\cdot\cdot,k\} $ and $\lambda\in [-\lambda_{k}\,,\,\lambda_{k}]$, from
\eqref{eq4-36} and \eqref{eq4-37} we have
\begin{equation*}\label{eq4-38}
\theta_{i+1}<\mathcal{E}_{k_{i},\lambda}(u_{k_{i},\lambda})<\theta_{i}<0,
\end{equation*}
and therefore
\begin{equation}\label{eq4-39}
\theta_{i+1}<\mathcal{E}_{k_{i},\lambda}(u_{k_{i},\lambda})<\theta_{i}<
\mathcal{E}_{k_{i-1},\lambda}(u_{k_{i-1},\lambda})<\cdot\cdot
\cdot<\theta_{2}<\mathcal{E}_{k_1,\lambda}(u_{k_1,\lambda})<\theta_{1}<0.
\end{equation}

Moreover, by the fact that $\eta_{k_{1}}<\eta_{k_{2}}<\cdot\cdot\cdot<\eta_{k_{i}}<\cdot\cdot\cdot$ and $u_{k_{i},\lambda}\in [0\,,\,\delta_{k_{i}}]\subset [0\,,\,\eta_{k_i}]$,
we get $u_{k_{i},\lambda}\in E_{\eta_{k_{i}}}\subset E_{\eta_{k_{k}}}$
for every $i\in \{1,2,\cdot\cdot\cdot,k\}$. So
\begin{equation*}\label{eq4-39**}
g_{k_{i}}(u_{k_{i},\lambda})=g(\tau_{\eta_{k_{i}}}(u_{k_{i},\lambda}))=
g(\tau_{\eta_{k_{i}}}(u_{k_{k},\lambda}))=g_{k_{i}}(u_{k_{k},\lambda}),
\end{equation*}
and
\begin{equation}\label{eq4-39*}
\mathcal{E}_{k_{i},\lambda}(u_{k_{i},\lambda})=\mathcal{E}_{k_{k},\lambda}(u_{k_{i},\lambda})\quad
\text{for every}\,\,i\in \{1,2,\cdot\cdot\cdot,k\}.
\end{equation}
Therefore, by \eqref{eq4-39} and \eqref{eq4-39*},
we obtain that for every $\lambda\in[-\lambda_{k}\,,\,\lambda_{k}],$
\begin{equation*}\label{eq4-40}
\mathcal{E}_{k_{k},\lambda}(u_{k_{k},\lambda})<
\mathcal{E}_{k_{k},\lambda}(u_{k_{k-1},\lambda})<
\cdot\cdot\cdot<
\mathcal{E}_{k_{k},\lambda}(u_{k_{2},\lambda})<
\mathcal{E}_{k_{k},\lambda}(u_{k_{1},\lambda})
<0=\mathcal{E}_{k_{k},\lambda}(0).
\end{equation*}
These inequalities show that the elements $u_{k_1,\lambda}$, $u_{k_2,\lambda}$, $\cdot\cdot\cdot$,
$u_{k_k,\lambda}$ are distinct and nontrivial solutions of problem \eqref{eq1-1} whenever $\lambda\in[-\lambda_{k}\,,\,\lambda_{k}]$.

{\bf Step 3.}\quad It remains to prove conclusion \eqref{eq1-6}.
We claim that $$\|u_{k_{i},\lambda}\|_{L^{\infty}(\Omega)}>\delta_{k_{i-1}}\quad
\text{for each} \,\,i\in\{2,3,\cdot\cdot\cdot,k\}.$$
Arguing by contradiction, we assume that there exists an $i_{0}\in\{2,3,\cdot\cdot\cdot,k\}$
such that $\|u_{k_{i_{0}},\lambda}\|_{L^{\infty}(\Omega)}\leq \delta_{k_{i_{0}-1}}$.
It follows from $\delta_{k_{i_0-1}}<\eta_{k_{i_0-1}}$ that $u_{k_{i_0},\lambda}\in E_{\eta_{k_{i_0-1}}}$.
Then
\begin{equation*}\label{eq4-41}
\mathcal{E}_{k_{i_0-1},\lambda}(u_{k_{i_0-1},\lambda})
=\min\limits_{E_{\eta_{k_{i_0-1}}}}\mathcal{E}_{k_{i_0-1},\lambda}
\leq \mathcal{E}_{k_{i_0-1},\lambda}(u_{k_{i_0},\lambda})
\leq \mathcal{E}_{k_{i_0},\lambda}(u_{k_{i_0},\lambda}),
\end{equation*}
which contradicts \eqref{eq4-39}. Therefore, for every $i\geq  2$,
without loss of generality, we assume that $k_{i}\geq i$, then
$$\|u_{k_{i},\lambda}\|_{L^{\infty}(\Omega)}\geq \delta_{k_{i-1}}\geq k_{i-1}\geq i-1.$$

Now, for $i=1$, from \eqref{eq4-39}, we have $\mathcal{E}_{k_1,\lambda}(u_{k_1,\lambda})<0$
which implies that \\
 $\|u_{k_{1},\lambda}\|_{L^{\infty}(\Omega)}>0$.
Thus the relation \eqref{eq1-6} holds true.
This completes the proof of Theorem \ref{th1-4}.

\section{Some more general problems}
In this section, we point out that the methods and results could be applied to some more general
problems. Firstly, let us consider the following problem involving concave-convex
and oscillating nonlinearities, that is,
\begin{equation}\label{eq5-1}
\left\{\begin{array}{lll}
&(-\Delta)^{s}u=\lambda u^{p}+\mu u^{q}+f(u),\,\,u>0 \quad &\mbox{in}\,\,\Omega,\\
&u=0\quad &\mbox{on}\,\,\mathbb{R}^{N}\setminus\Omega,\\
\end{array}\right.
\end{equation}
where $\Omega\subset \mathbb{R}^{N}$ is a bounded domain, $0<p<1<q$, and $\lambda,\,\mu\in \mathbb{R}$.
We have the following result.
\begin{theorem}\label{th3-1}
Assume $f\in C([0,\infty)\,,\,\mathbb{R})$, $f(0)=0$, $0<p<1<q$ and
\begin{itemize}
\item[(a)] If $(f_{0})$ holds, then for every $k\in \mathbb{N}$, and $\mu\in \mathbb{R}$,
there exists $\lambda_{k,\mu}>0$ such that Eq. \eqref{eq5-1} has at least $k$ distinctly positive solutions in $\mX$
whenever $\lambda\in [-\lambda_{k,\mu},\,,\lambda_{k,\mu}]$.
\item[(b)]If $(f_{\infty})$ holds, then for every $k\in \mathbb{N}$, and $\lambda\in \mathbb{R}$,
there exists $\mu_{k,\lambda}>0$ such that Eq. \eqref{eq5-1} has at least $k$ distinct positive solutions in $\mX$
whenever $\mu\in [-\mu_{k,\lambda},\,,\mu_{k,\lambda}]$.
\end{itemize}
\end{theorem}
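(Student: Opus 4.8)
The plan is to reduce both parts to the perturbation schemes already carried out in the proofs of Theorems \ref{th1-2} and \ref{th1-4}, using the hypothesis $0<p<1<q$ to ensure that $u^{q}$ is a lower-order term near the origin (since $u^{q-1}\to0$ as $u\to0^{+}$) while $u^{p}$ is a lower-order term at infinity (since $u^{p-1}\to0$ as $u\to\infty$). In each case one power is absorbed into the ``oscillatory'' nonlinearity $g$ and the other becomes the small coefficient that must be tuned.

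For part (a) I would fix $k\in\mathbb{N}$ and $\mu\in\mathbb{R}$, pick $\lambda_{0}\in(0,-l_{0})$, and set $g(u)=\mu u^{q}+\lambda_{0}u+f(u)$, so that \eqref{eq5-1} takes the form $(-\Delta)^{s}u+\lambda_{0}u=\lambda u^{p}+g(u)$; thus $g$ plays the role that $\lambda_{0}u+f(u)$ played in the proof of Theorem \ref{th1-2}, while $\lambda u^{p}$ is the perturbation. First I would check, using $\frac{G(u)}{u^{2}}=\frac{\mu}{q+1}u^{q-1}+\frac{\lambda_{0}}{2}+\frac{F(u)}{u^{2}}$ together with $q>1$, that $(f_{0})$ forces $(G_{2})$, and that $\liminf_{u\to0^{+}}\frac{g(u)}{u}=\lambda_{0}+l_{0}<0$ produces a sequence $u_{j}\downarrow0$ with $g(u_{j})<0$, hence $(G_{3})$. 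Then I would repeat Step 1 of the proof of Theorem \ref{th1-2}: set $\widetilde{g}(\lambda,u)=|\lambda|u^{p}+\mu u^{q}+\lambda_{0}u+f(u)\ge g(u)$, choose $\delta_{j}<u_{j}<\eta_{j}\downarrow0$ and $\lambda_{j}\downarrow0$ with $\widetilde{g}(\lambda,\cdot)\le0$ on $[\delta_{j},\eta_{j}]$ for $|\lambda|\le\lambda_{j}$, truncate $g_{j}=g\circ\tau_{\eta_{j}}$, and invoke Theorem \ref{th2-1} to obtain, for each $j$ and $\lambda\in[-\lambda_{j},\lambda_{j}]$, a minimizer $u_{j,\lambda}\in E_{\eta_{j}}$ with $u_{j,\lambda}\in[0,\delta_{j}]$, which solves \eqref{eq5-1} since then $g_{j}(u_{j,\lambda})=g(u_{j,\lambda})$. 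For $\lambda=0$, Theorem \ref{th2-2} applies to $g$ and gives $\mathcal{E}_{j,0}(u_{j,0})<0$ with $\mathcal{E}_{j,0}(u_{j,0})\to0$; I would then insert strictly increasing negative levels $\theta_{j}\uparrow0$ separating $\mathcal{E}_{j,0}(u_{j,0})$ and $\mathcal{E}_{j,0}(z_{u_{j,0}})$, define $\alpha_{i},\beta_{i}>0$ from the gaps $\theta_{i+1}-\mathcal{E}_{i,0}(z_{u_{i,0}})$ and $\mathcal{E}_{i,0}(u_{i,0})-\theta_{i}$ divided by $\int_{\Omega}z_{u_{i,0}}^{p+1}$ and $\int_{\Omega}u_{i,\lambda}^{p+1}\le|\Omega|\eta_{i}^{p+1}$ respectively, and set $\lambda_{k,\mu}=\min\{\lambda_{1},\dots,\lambda_{k},\alpha_{1},\dots,\alpha_{k},\beta_{1},\dots,\beta_{k}\}$. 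The estimates \eqref{eq3-21}--\eqref{eq3-22} then carry over verbatim to give $\theta_{i}<\mathcal{E}_{i,\lambda}(u_{i,\lambda})<\theta_{i+1}<0$ for $i\le k$ when $|\lambda|\le\lambda_{k,\mu}$, and since $u_{i,\lambda}\in E_{\eta_{i}}\subset E_{\eta_{1}}$ the energies are strictly ordered, so $u_{1,\lambda},\dots,u_{k,\lambda}$ are $k$ distinct positive solutions.

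For part (b) I would run the mirror argument following the proof of Theorem \ref{th1-4}: fix $k$ and $\lambda\in\mathbb{R}$, pick $\lambda_{\infty}\in(0,-l_{\infty})$, and set $g(u)=\lambda u^{p}+\lambda_{\infty}u+f(u)$ and $\widetilde{g}(\mu,u)=\lambda u^{p}+|\mu|u^{q}+\lambda_{\infty}u+f(u)$, so that \eqref{eq5-1} reads $(-\Delta)^{s}u+\lambda_{\infty}u=\mu u^{q}+g(u)$; now $g$ is the oscillatory-at-infinity part (as in Theorem \ref{th1-3}(b)) and $\mu u^{q}$ is the perturbation. Using $0<p<1$ in $\frac{G(u)}{u^{2}}=\frac{\lambda}{p+1}u^{p-1}+\frac{\lambda_{\infty}}{2}+\frac{F(u)}{u^{2}}$, $(f_{\infty})$ yields $(G'_{2})$, and $\liminf_{u\to\infty}\frac{g(u)}{u}=\lambda_{\infty}+l_{\infty}<0$ gives $u_{j}\to+\infty$ with $g(u_{j})<0$, hence $(G'_{3})$. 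Choosing $\delta_{j}<u_{j}<\eta_{j}<\delta_{j+1}$ with $\delta_{j}\to\infty$ and $\mu_{j}\downarrow0$, truncating and applying Theorem \ref{th2-1} gives minimizers $u_{j,\mu}\in E_{\eta_{j}}\cap[0,\delta_{j}]$ solving \eqref{eq5-1}; for $\mu=0$, Theorem \ref{th2-3} supplies a subsequence $(u_{k_{i},0})$ with $\mathcal{E}_{k_{i},0}(u_{k_{i},0})\to-\infty$. Then, as in Step 2 of the proof of Theorem \ref{th1-4} but with the exponent $p$ there replaced by $q$ (so that $\int_{\Omega}|z_{u_{k_{i}}}|^{q+1}\le|\Omega|\delta_{k_{i}}^{q+1}$), I would insert a decreasing sequence $\theta_{i}\to-\infty$, define $\alpha_{i},\beta_{i}>0$ and $\mu_{k,\lambda}=\min\{\mu_{1},\dots,\mu_{k},\alpha_{1},\dots,\alpha_{k},\beta_{1},\dots,\beta_{k}\}$, and obtain $\theta_{i+1}<\mathcal{E}_{k_{i},\mu}(u_{k_{i},\mu})<\theta_{i}<0$ for $i\le k$ when $|\mu|\le\mu_{k,\lambda}$; the strict ordering of these energies, using $u_{k_{i},\mu}\in E_{\eta_{k_{i}}}\subset E_{\eta_{k_{k}}}$, then yields $k$ distinct positive solutions.

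The main obstacle is really confined to the verification at the start of each case: one must confirm that absorbing $\mu u^{q}$ into $g$ in (a) (respectively $\lambda u^{p}$ in (b)) does not spoil the oscillation conditions $(G_{2})$--$(G_{3})$ (respectively $(G'_{2})$--$(G'_{3})$). This is exactly where the restriction $0<p<1<q$ is used: $u^{q-1}\to0$ as $u\to0^{+}$ makes $\frac{\mu}{q+1}u^{q-1}$ and $\mu u^{q-1}$ negligible against $\frac{F(u)}{u^{2}}$ and $\frac{f(u)}{u}$ near the origin, and symmetrically $u^{p-1}\to0$ as $u\to\infty$ makes the $u^{p}$-contribution negligible at infinity. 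Once this is settled, the remainder is a routine transcription of the perturbation arguments of Sections 4 and 5, with $\lambda_{k,\mu}$ (respectively $\mu_{k,\lambda}$) chosen small enough to preserve the strict energy ordering that forces the $k$ solutions to be distinct.
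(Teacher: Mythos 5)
Your proposal is correct and is precisely the argument the paper intends: the paper states Theorem \ref{th3-1} without proof, asserting only that the methods of Sections 4--5 apply, and your reduction---absorbing $\mu u^{q}$ into the oscillatory nonlinearity near the origin (using $u^{q-1}\to 0$ as $u\to 0^{+}$) for part (a), and $\lambda u^{p}$ into it at infinity (using $u^{p-1}\to 0$ as $u\to\infty$) for part (b), then rerunning the perturbation schemes of Theorems \ref{th1-2} and \ref{th1-4}---is exactly the omitted argument. One small notational point: the truncation $\tau_{\eta_{j}}$ should be applied to the \emph{entire} right-hand side (perturbation term included), i.e.\ to $\lambda\tau_{\eta_j}(u)^{p}+g(\tau_{\eta_j}(u))$ in (a) and $\mu\tau_{\eta_j}(u)^{q}+g(\tau_{\eta_j}(u))$ in (b), so that the hypotheses $(g_1)$--$(g_2)$ of Theorem \ref{th2-1} hold for the full truncated nonlinearity, as in the paper's own proofs.
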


Next, we will study the perturbed problem

\begin{equation}\label{eq5-2}
\left\{\begin{array}{lll}
&(-\Delta)^{s}u=f(u)+\varepsilon g(u),\,\,u>0 \quad &\mbox{in}\,\,\Omega,\\
&u=0\quad &\mbox{on}\,\,\mathbb{R}^{N}\setminus\Omega,\\
\end{array}\right.
\end{equation}
where $\varepsilon\geq 0$ and $\Omega$ is a bounded domain in $\mathbb{R}^{N}$ with smooth boundary.

The assumptions of \eqref{eq5-2} are replaced by
\begin{itemize}
\item[$(H_{fg})$] $f,\,g\in C([0\,,\,\infty)\,,\,\mathbb{R})$, such that $f(0)=g(0)=0$ and
 $\sup_{t\in [0\,,\,M]}(|f(t)|+|g(t)|)<\infty$
for some $M>0$.
\end{itemize}
\begin{itemize}
\item[$(f^{1}_{0})$]$-\infty<\liminf\limits_{t\to 0^{+}}\frac{F(t)}{t^2}\leq
                \limsup\limits_{t\to 0^{+}}\frac{F(t)}{t^2}=+\infty$ uniformly in $x\in \Omega$;
\item[$(f^{2}_{0})$] There exists a sequence $\{t_{k}\}_{k\in \mathbb{N}}\subset (0\,,\,+\infty)$
converging to 0 such that $t_{k}>0$,
and there exists $\alpha_{i}>0$ satisfying $f(t_{k})<-\alpha_{k}$ for all $x\in\Omega$
for every $k\in \mathbb{N}$;
\end{itemize}
and
\begin{itemize}
\item[$(f^{1}_{\infty})$]$-\infty<\liminf\limits_{t\to \infty}\frac{F(t)}{t^2}\leq
              \limsup\limits_{t\to\infty}\frac{F(t)}{t^2}=+\infty$ uniformly in $x\in \Omega$;
\item[$(f^{2}_{\infty})$] There exists a sequence $\{t_{k}\}_{k\in \mathbb{N}}\subset (0\,,\,+\infty)$ converging to $+\infty$ such that $t_{k}>0$,
and there exists $\beta_{i}>0$ satisfying $f(t_{k})<-\beta_{k}$ for all $x\in\Omega$ for every $k\in \mathbb{N}$.
\end{itemize}
Then we have similar conclusions corresponding to Theorems \ref{th1-1}-\ref{th1-4}.
We omit the proof here.

\begin{theorem}\label{th5-1}
Assume $(H_{fg})$, $(f^{1}_{0})$ and $(f^{2}_{0})$ hold.
\begin{itemize}
 \item[(i)]For $\varepsilon=0$, there exist infinitely many positive solutions
$\{u_{k}\}_{k}$ of Eq. \eqref{eq5-2} such that
$$\lim\limits_{k\to\infty}\|u_{k}\|_{L^{\infty}(\Omega)}=\lim\limits_{k\to \infty}\|u_{k}\|_{\mX}=0.$$
\item[(ii)] For every $k\in \mathbb{N}$, there exists $\varepsilon_{k}>0$ such that problem \eqref{eq5-2}
has at least $k$ solutions $u_{1,\varepsilon},\,u_{2,\varepsilon},\,\cdot\cdot\cdot,\,u_{k,\varepsilon}$
whenever $\varepsilon\in[-\varepsilon_{k}\,,\,\varepsilon_{k}]$. Moreover, for every $i\in\{1,2,\cdot\cdot\cdot,k\}$,
we have
$$\|u_{i,\varepsilon}\|_{\mX}<\frac{1}{i}\quad \text{and}\quad
\|u_{i,\varepsilon}\|_{L^{\infty}(\Omega)}<\frac{1}{i}\quad \forall \, \varepsilon\in [-\varepsilon_{k}\,,\,\varepsilon_{k}].$$
\end{itemize}
\end{theorem}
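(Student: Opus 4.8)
The plan is to deduce Theorem~\ref{th5-1} from the auxiliary results of Section~3, letting the perturbation $\varepsilon g$ play the role that the concave term $\lambda u^{p}$ plays in Theorems~\ref{th1-1}--\ref{th1-2}, with the boundedness hypothesis $(H_{fg})$ replacing the Sobolev control of $u^{p}$. Throughout one fixes an auxiliary constant $\mu>0$ and rewrites \eqref{eq5-2} as $(-\Delta)^{s}u+\mu u=\mu u+f(u)+\varepsilon g(u)$.

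For part (i), take $\varepsilon=0$ and put $g_{*}(u)=\mu u+f(u)$. I would verify $(G_1)$--$(G_3)$ for $g_{*}$: $(G_1)$ is immediate from $f(0)=0$ and $(H_{fg})$ (take $s_{0}=M$); $(G_2)$ holds since $\int_{0}^{t}g_{*}=\tfrac{\mu}{2}t^{2}+F(t)$, so that $(f^{1}_{0})$ transfers directly; and $(G_3)$ follows from $(f^{2}_{0})$: the (w.l.o.g.\ strictly decreasing) sequence $t_{k}\to0^{+}$ with $f(t_{k})<0$ has, by continuity, neighbourhoods $[\delta_{k},\eta_{k}]\ni t_{k}$ on which $f\le\tfrac12 f(t_{k})$, which can be shrunk so that $\mu\eta_{k}\le-\tfrac12 f(t_{k})$ and $0<\eta_{k+1}<\delta_{k}<\eta_{k}$, $\eta_{k}\to0$; on each such interval $g_{*}\le0$. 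Theorem~\ref{th2-2} then yields $\{u_{k}\}$ with $\|u_{k}\|_{\mX}\to0$ and $\|u_{k}\|_{L^{\infty}(\Omega)}\to0$, and these solve \eqref{eq5-2} with $\varepsilon=0$.

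For part (ii) I would run the three-step scheme of the proof of Theorem~\ref{th1-2} with $\varepsilon$ in place of $\lambda$. In Step~1, using $(f^{2}_{0})$, the continuity of $f,g$ and $(H_{fg})$ (so that $|\varepsilon g(u)|\le|\varepsilon|\sup_{[0,M]}|g|$ can be absorbed into the slack $-\tfrac12 f(t_{k})$), one produces $0<\eta_{k+1}<\delta_{k}<t_{k}<\eta_{k}\to0$ and $\varepsilon^{*}_{k}>0$ such that $h_{\varepsilon}(u):=\mu u+f(u)+\varepsilon g(u)\le0$ on $[\delta_{k},\eta_{k}]$ for all $|\varepsilon|\le\varepsilon^{*}_{k}$. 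The truncated nonlinearities $h_{\varepsilon,k}(u)=h_{\varepsilon}(\tau_{\eta_{k}}(u))$ satisfy $(g_1)$--$(g_2)$, so Theorem~\ref{th2-1} gives a minimizer $u_{k,\varepsilon}\in E_{\eta_{k}}$ of the associated energy $\mathcal{E}_{k,\varepsilon}$, with $u_{k,\varepsilon}\in[0,\delta_{k}]$; since the truncation is the identity on $[0,\delta_{k}]$, $u_{k,\varepsilon}$ solves \eqref{eq5-2} once it is shown nontrivial, and on $E_{\eta_{k}}$ one has $\mathcal{E}_{k,\varepsilon}(u)=\tfrac12\|u\|_{\mX}^{2}-\int_{\Omega}F(u)\,dx-\varepsilon\int_{\Omega}\!\big(\int_{0}^{u}g\big)\,dx$, the auxiliary $\mu$ cancelling exactly as in Section~4. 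In Step~2, for $\varepsilon=0$ one is in the setting of Theorem~\ref{th2-2}, so testing with the functions $z_{\zeta}$ of Lemma~\ref{l2-3} gives $\mathcal{E}_{k,0}(u_{k,0})\le\mathcal{E}_{k,0}(z_{u_{k,0}})<0$ and $\mathcal{E}_{k,0}(u_{k,0})\to0$; one picks a strictly increasing sequence $\theta_{k}\uparrow0$ with $\theta_{k}<\mathcal{E}_{k,0}(u_{k,0})\le\mathcal{E}_{k,0}(z_{u_{k,0}})<\theta_{k+1}<0$, and sets
$$\widetilde{\varepsilon}_{k}=\min_{1\le i\le k}\Big\{\varepsilon^{*}_{i},\ \frac{\theta_{i+1}-\mathcal{E}_{i,0}(z_{u_{i,0}})}{\sup_{[0,M]}|g|\,|\Omega|\,\|z_{u_{i,0}}\|_{L^{\infty}(\Omega)}+1},\ \frac{\mathcal{E}_{i,0}(u_{i,0})-\theta_{i}}{\sup_{[0,M]}|g|\,|\Omega|\,\delta_{i}+1}\Big\}>0 .$$
Using $\big|\varepsilon\int_{\Omega}\int_{0}^{v}g\big|\le|\varepsilon|\sup_{[0,M]}|g|\,|\Omega|\,\|v\|_{L^{\infty}(\Omega)}$ together with the minimality of $u_{i,0}$ and $u_{i,\varepsilon}$ over $E_{\eta_{i}}$, one gets $\theta_{i}<\mathcal{E}_{i,\varepsilon}(u_{i,\varepsilon})<\theta_{i+1}<0$ for $1\le i\le k$ and $|\varepsilon|\le\widetilde{\varepsilon}_{k}$; since $E_{\eta_{i}}\subset E_{\eta_{1}}$ and the truncations agree there, $\mathcal{E}_{1,\varepsilon}(u_{i,\varepsilon})=\mathcal{E}_{i,\varepsilon}(u_{i,\varepsilon})$, whence $\mathcal{E}_{1,\varepsilon}(u_{1,\varepsilon})<\dots<\mathcal{E}_{1,\varepsilon}(u_{k,\varepsilon})<0=\mathcal{E}_{1,\varepsilon}(0)$, so $u_{1,\varepsilon},\dots,u_{k,\varepsilon}$ are distinct nontrivial positive solutions. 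In Step~3, $\mathcal{E}_{1,\varepsilon}(u_{i,\varepsilon})<0$ with the displayed formula gives $\tfrac12\|u_{i,\varepsilon}\|_{\mX}^{2}\le\big(\sup_{[0,\eta_{1}]}|f|+\sup_{[0,\eta_{1}]}|g|\big)|\Omega|\,\delta_{i}$, and prescribing $\delta_{i}<1/i$ small enough (a freedom available in Step~1) yields both estimates in (ii).

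The main obstacle is Step~2: maintaining the strict separation of the energy levels by the fixed thresholds $\theta_{i}$ under the perturbation. This is what forces $\widetilde{\varepsilon}_{k}$ to be a finite minimum over the first $k$ levels and makes essential use of $(H_{fg})$ to bound $\int_{\Omega}\!\int_{0}^{v}g\,d\tau\,dx$ uniformly along the test functions $z_{u_{i,0}}$ and the minimizers $u_{i,\varepsilon}$; without such a bound the perturbation could merge distinct levels. Everything else — coercivity, boundedness below, the (PS)-condition, the $E_{\eta}$-minimization and the $[0,\delta_{k}]$-localization — is supplied verbatim by Theorems~\ref{th2-1}--\ref{th2-3} and Lemma~\ref{l2-3}.
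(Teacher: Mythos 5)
Your overall route is exactly the one the paper intends: the paper gives no proof of Theorem \ref{th5-1} (it only remarks that the conclusions are ``similar'' to Theorems \ref{th1-1}--\ref{th1-4} and omits the argument), and your reduction --- rewrite \eqref{eq5-2} as $(-\Delta)^{s}u+\mu u=\mu u+f(u)+\varepsilon g(u)$, verify $(G_1)$--$(G_3)$ for the right-hand side, invoke Theorems \ref{th2-1}--\ref{th2-2}, and then rerun the three-step level-separation argument from the proof of Theorem \ref{th1-2} with $\varepsilon$ in place of $\lambda$, using $(H_{fg})$ to bound $\varepsilon\int_{\Omega}\int_{0}^{v}g\,d\tau\,dx$ uniformly on $E_{\eta_{1}}$ --- is the correct transcription of Sections 3--4 to the perturbed setting. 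Steps 2 and 3 of your part (ii), including the choice of $\widetilde{\varepsilon}_{k}$ and the final norm estimates via $u_{i,\varepsilon}\le\delta_{i}$, are sound as written.

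The one genuine gap is the verification of $(G_3)$ (and of $(g_2)$ in Step 1 of part (ii)). You need $\mu u+f(u)\le 0$ on intervals $[\delta_{k},\eta_{k}]\ni t_{k}$ with a \emph{fixed} $\mu>0$: the auxiliary problem \eqref{eq2-1} and the identities $\mathcal{E}_{k}(u_{k})=\mathcal{E}_{1}(u_{k})$ that yield infinitely many distinct solutions require a single $\mu$ for all $k$, and one cannot simply take $\mu=0$, since the localization $u_{0}\in[0,\delta]$ in Theorem \ref{th2-1}(ii) uses $\mu>0$ essentially. You arrange the sign condition by ``shrinking so that $\mu\eta_{k}\le-\tfrac12 f(t_{k})$''; but since the interval must contain $t_{k}$ one has $\eta_{k}\ge t_{k}$, so this forces $\mu t_{k}\le\tfrac12|f(t_{k})|$, i.e. $|f(t_{k})|/t_{k}\ge 2\mu$ along the whole sequence. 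Hypothesis $(f^{2}_{0})$ as literally stated only gives $f(t_{k})<-\alpha_{k}$ for some $\alpha_{k}>0$ depending on $k$. If, say, $f(t_{k})=-t_{k}^{2}$, this hypothesis holds, yet for every fixed $\mu>0$ one has $\mu u+f(u)>0$ throughout a neighbourhood of $t_{k}$ for all large $k$ (and indeed for all small $u$ at which $f(u)/u>-\mu$), so no admissible intervals exist and the reduction to \eqref{eq2-1} breaks down. The step is salvageable only under the quantitative reading $\liminf_{t\to 0^{+}}f(t)/t<0$, i.e. $f(t_{k})\le-\alpha\,t_{k}$ for a fixed $\alpha>0$ and $\mu<\alpha$ --- which is precisely what the analogous hypothesis $(f_{0})$ supplies in Theorems \ref{th1-1}--\ref{th1-2}. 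You should either state explicitly that you adopt this reading of $(f^{2}_{0})$, or note that the literal hypothesis is too weak for the Section 3 machinery; the same defect is present in the paper's omitted argument, but your write-up asserts the absorption of $\mu u$ as if it were automatic, and that is the step that can fail.
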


\begin{theorem}\label{th5-2}
Assume $(H_{fg})$, $(f^{1}_{\infty})$ and $(f^{2}_{\infty})$ hold.
\begin{itemize}
 \item[(i)]For $\varepsilon=0$, there exist infinitely many positive solutions
$\{u_{k}\}_{k}$ of Eq. \eqref{eq5-2} such that
$$\lim\limits_{k\to\infty}\|u_{k}\|_{L^{\infty}(\Omega)}=+\infty.$$
\item[(ii)] For every $k\in \mathbb{N}$, there exists $\varepsilon_{k}>0$ such that problem \eqref{eq5-2}
has at least $k$ solutions $u_{1,\varepsilon},\,u_{2,\varepsilon},\,\cdot\cdot\cdot,\,u_{k,\varepsilon}$
whenever $\varepsilon\in[-\varepsilon_{k}\,,\,\varepsilon_{k}]$. Moreover, for every $i\in\{1,2,\cdot\cdot\cdot,k\}$,
we have
$$\|u_{i,\varepsilon}\|_{L^{\infty}(\Omega)}>i-1\quad \forall \, \varepsilon\in [-\varepsilon_{k}\,,\,\varepsilon_{k}].$$
\end{itemize}
\end{theorem}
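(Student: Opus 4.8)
The plan is to reduce both assertions to the auxiliary theory of Section~3, essentially rerunning the proofs of Theorems~\ref{th1-3}(a) and~\ref{th1-4} with the concave perturbation $\lambda u^{p}$ there replaced by $\varepsilon g(u)$ here. Since $\varepsilon=0$ is the model case, I would settle (i) first and then build (ii) on top of it. Throughout one fixes a number $\mu>0$ so that \eqref{eq5-2} may be rewritten as $(-\Delta)^{s}u+\mu u=\mu u+f(u)+\varepsilon g(u)$, i.e.\ as problem \eqref{eq2-1}; the extra linear term $\mu u$ is harmless because, by the construction below, all solutions produced lie in $[0,\delta_{k}]$, where the truncation is inactive.

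For (i), set $\widehat g(u):=\mu u+f(u)$ (extended by $0$ on $(-\infty,0]$). Then $(G'_{1})$ is immediate from $(H_{fg})$ and $f(0)=0$; since $\widehat G(u)=\tfrac{\mu}{2}u^{2}+F(u)$ we have $\widehat G(u)/u^{2}=\tfrac{\mu}{2}+F(u)/u^{2}$, so $(f^{1}_{\infty})$ forces $(G'_{2})$; and from $(f^{2}_{\infty})$ (together with $(f^{1}_{\infty})$, and choosing $\mu$ as in the proof of Theorem~\ref{th1-3}(a)) one gets a sequence $t_{k}\to+\infty$ with $\widehat g(t_{k})<0$, whence, by continuity of $\widehat g$, sequences $0<\delta_{k}<t_{k}<\eta_{k}<\delta_{k+1}$ with $\delta_{k}\to\infty$ and $\widehat g\le 0$ on each $[\delta_{k},\eta_{k}]$, i.e.\ $(G'_{3})$. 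Theorem~\ref{th2-3} then yields infinitely many positive solutions $\{u_{k}\}\subset\mX$ of \eqref{eq5-2} with $\|u_{k}\|_{L^{\infty}(\Omega)}\to\infty$, which is (i).

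For (ii) I would follow the three steps of Theorem~\ref{th1-4}. \textbf{Step 1:} with the same $\mu$ put $g_{\varepsilon}(u):=\mu u+f(u)+\varepsilon g(u)$ and the majorant $\overline g(\varepsilon,u):=\mu u+f(u)+|\varepsilon|\,|g(u)|\ge g_{\varepsilon}(u)$; since $\overline g(0,\cdot)=\widehat g$ and $\overline g$ is jointly continuous, the choices of (i) upgrade to sequences $0<\delta_{k}<t_{k}<\eta_{k}<\delta_{k+1}$, $\delta_{k}\to\infty$, and thresholds $\varepsilon_{k}>0$ with $\overline g(\varepsilon,u)\le 0$, hence $g_{\varepsilon}(u)\le 0$, for all $|\varepsilon|\le\varepsilon_{k}$ and $u\in[\delta_{k},\eta_{k}]$. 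The truncation $g^{\varepsilon}_{k}(u):=g_{\varepsilon}(\tau_{\eta_{k}}(u))$ then satisfies $(g_{1})$--$(g_{2})$, so Theorem~\ref{th2-1} supplies a minimiser $u_{k,\varepsilon}\in E_{\eta_{k}}$ of the associated functional $\mathcal E_{k,\varepsilon}$ over $E_{\eta_{k}}$ with $u_{k,\varepsilon}\in[0,\delta_{k}]$, and $u_{k,\varepsilon}$ solves \eqref{eq5-2} once $u_{k,\varepsilon}\not\equiv 0$. \textbf{Step 2:} at $\varepsilon=0$ the function $\widehat g$ obeys $(G'_{1})$--$(G'_{3})$, so the computation showing $c_{k}\to-\infty$ inside the proof of Theorem~\ref{th2-3} applies and, along a subsequence with $k_{i}\ge i$, produces a strictly decreasing $\theta_{i}\to-\infty$ with $\theta_{i+1}<\mathcal E_{k_{i},0}(u_{k_{i},0})\le\mathcal E_{k_{i},0}(z_{u_{k_{i}}})<\theta_{i}$, where $z_{u_{k_{i}}}$ is the function of Lemma~\ref{l2-3}/\eqref{eq2-23}. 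Since $\mathcal E_{k_{i},\varepsilon}(u)-\mathcal E_{k_{i},0}(u)=-\varepsilon\int_{\Omega}\big(\int_{0}^{u}g(t)\,dt\big)\,dx$ for $u\in E_{\eta_{k_{i}}}$, with $\big|\int_{0}^{u}g(t)\,dt\big|\le\delta_{k_{i}}\sup_{[0,\delta_{k_{i}}]}|g|$ on $[0,\delta_{k_{i}}]$, one shrinks $\varepsilon_{k}$ — replacing it at the $k$-th stage by the minimum of $\varepsilon_{1},\dots,\varepsilon_{k}$ and finitely many ratios of the gaps $\theta_{i}-\theta_{i+1}$ over $\delta_{k_{i}}\sup_{[0,\delta_{k_{i}}]}|g|$ ($i\le k$), exactly as $\widetilde\lambda_{k}$ is built in Theorem~\ref{th1-4} — so that $\theta_{i+1}<\mathcal E_{k_{i},\varepsilon}(u_{k_{i},\varepsilon})<\theta_{i}$ for all $i\le k$ and $|\varepsilon|\le\varepsilon_{k}$. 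Restricting all these functionals to the common set $E_{\eta_{k_{k}}}$ (valid since $u_{k_{i},\varepsilon}\in E_{\eta_{k_{i}}}\subset E_{\eta_{k_{k}}}$), the strict ordering forces $u_{k_{1},\varepsilon},\dots,u_{k_{k},\varepsilon}$ to be $k$ distinct nontrivial positive solutions of \eqref{eq5-2}. \textbf{Step 3:} for $i\ge2$ one has $\|u_{k_{i},\varepsilon}\|_{L^{\infty}(\Omega)}>\delta_{k_{i-1}}$, since otherwise $u_{k_{i},\varepsilon}\in E_{\eta_{k_{i-1}}}$ and minimality of $u_{k_{i-1},\varepsilon}$ over $E_{\eta_{k_{i-1}}}$ would give $\mathcal E_{k_{i-1},\varepsilon}(u_{k_{i-1},\varepsilon})\le\mathcal E_{k_{i},\varepsilon}(u_{k_{i},\varepsilon})$, contradicting Step~2; with $\delta_{k_{i-1}}\ge k_{i-1}\ge i-1$ and $\|u_{k_{1},\varepsilon}\|_{L^{\infty}(\Omega)}>0$ (from $\mathcal E_{k_{1},\varepsilon}(u_{k_{1},\varepsilon})<0=\mathcal E_{k_{1},\varepsilon}(0)$) this is the asserted bound.

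I expect the main obstacle to be the non-uniformity of the perturbation at infinity. Because the oscillation of $f$ sits at infinity, the truncation heights $\eta_{k}$ and the $L^{\infty}$-bounds $\delta_{k}$ diverge, and the crude estimate $|\mathcal E_{k,\varepsilon}-\mathcal E_{k,0}|\le|\varepsilon|\,|\Omega|\,\delta_{k}\sup_{[0,\delta_{k}]}|g|$ has a $k$-dependent, possibly very large, right-hand side, so no single $\varepsilon_{0}>0$ can work for all $k$ at once. The resolution — and the real content of Step~2 — is that for each fixed $k$ only the first $k$ energy levels need to stay separated, so $\varepsilon_{k}$ may be taken arbitrarily small in a way depending on $\delta_{k_{1}},\dots,\delta_{k_{k}}$ and on the finitely many gaps $\theta_{i}-\theta_{i+1}$, precisely the device used for $\widetilde\lambda_{k}$ in Theorem~\ref{th1-4}. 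A secondary point needing care is keeping the minimiser of the perturbed truncated functional inside $\{0\le u\le\delta_{k}\}$; this is exactly why the majorant $\overline g$ and the smallness of $\varepsilon_{k}$ are introduced, so that Theorem~\ref{th2-1}(ii) carries over without modification.
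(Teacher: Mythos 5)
Your route is exactly the one the paper intends: Theorem \ref{th5-2} is stated with the remark ``we omit the proof here,'' deferring to Theorems \ref{th1-3}--\ref{th1-4} and the auxiliary results of Section 3, and your proposal is a faithful execution of that plan, with the role of $\frac{\lambda}{p+1}\int_\Omega u^{p+1}dx$ in the proof of Theorem \ref{th1-4} taken over by $\varepsilon\int_\Omega\big(\int_0^u g(t)\,dt\big)dx$ and the bound $\frac{|\lambda|}{p+1}\delta_{k_i}^{p+1}$ replaced by $|\varepsilon|\,|\Omega|\,\delta_{k_i}\sup_{[0,\delta_{k_i}]}|g|$. The energy-separation device (shrinking $\varepsilon_k$ using only the finitely many gaps around $\theta_1,\dots,\theta_{k+1}$), the comparison of all functionals on the common set $E_{\eta_{k_k}}$, and the Step 3 contradiction argument for the $L^{\infty}$ lower bound all carry over correctly; your diagnosis of the main obstacle (non-uniformity of the perturbation in $k$) and its resolution are exactly right.

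There is, however, one step that does not follow from the hypotheses as stated: the verification of $(G'_3)$ for $\widehat g(u)=\mu u+f(u)$. Since Theorem \ref{th2-1}(ii) genuinely needs $\mu>0$ (it is what forces $|\Omega_1|=|\Omega_2|=0$ in \eqref{eq2-11*}), you must fix a single $\mu>0$ for all $k$, and then $(G'_3)$ requires $\mu t_k+f(t_k)<0$, i.e.\ $f(t_k)<-\mu t_k$, along a sequence $t_k\to+\infty$. But $(f^{2}_{\infty})$ only provides $f(t_k)<-\beta_k$ with unspecified $\beta_k>0$, i.e.\ merely $f(t_k)<0$; since $t_k\to\infty$, no fixed $\mu>0$ can be ``chosen as in the proof of Theorem \ref{th1-3}(a)'' — in that proof one has $l_\infty=\liminf_{t\to\infty}f(t)/t<0$, which is precisely the linear rate of negativity missing here. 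So either $(f^{2}_{\infty})$ must be read or strengthened to $f(t_k)\le-\mu_0 t_k$ for some fixed $\mu_0>0$ (equivalently $\liminf_{t\to\infty}f(t)/t<0$), or the auxiliary comparison argument must be redone without the $\mu u$ term. This defect originates in the paper's stated hypotheses rather than in your strategy (the same issue afflicts $(f^{2}_{0})$ and Theorem \ref{th5-1}), but as written your sentence deriving $\widehat g(t_k)<0$ from $(f^{2}_{\infty})$ is not justified and should be flagged or repaired; the rest of the argument is sound once this is granted.
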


{\bf Acknowledgments} The authors thank Prof. Jianfu Yang for his guidance and useful discussions on the
topic of this paper.

\end{document}